\newcounter{algorithmicH}
\let\oldalgorithmic\algorithmic
\renewcommand{\algorithmic}{
  \stepcounter{algorithmicH}
  \oldalgorithmic}
\renewcommand{\theHALG@line}{ALG@line.\thealgorithmicH.\arabic{ALG@line}}
\newtheorem{assumption}{Assumption}
\journalname{Nonlinear Dynamics}
\begin{document}

\title{Beyond expectations: Residual Dynamic Mode Decomposition and Variance for Stochastic Dynamical Systems}


\author{Matthew J. Colbrook         \and
        Qin Li \and
				Ryan V. Raut \and
				Alex Townsend}


\institute{Matthew J. Colbrook \at
DAMTP, University of Cambridge, Cambridge, CB3 0WA, UK\\
\email{m.colbrook@damtp.cam.ac.uk}           
\and
Qin Li \at
Department  of  Mathematics,  University  of  Wisconsin-Madison,  Madison,  WI  53706,  USA\\
\email{qinli@math.wisc.edu}
\and
Ryan V. Raut \at
Allen Institute, Seattle, WA 98109, USA\\
Department of Physiology and Biophysics, University of Washington, Seattle, WA 98195, USA\\
\email{ryan.raut@alleninstitute.org}
\and
Alex Townsend \at
Department of Mathematics, Cornell University, Ithaca, NY 14853, USA\\
\email{townsend@cornell.edu}
}
\date{\today}

\maketitle

\begin{abstract}
Koopman operators linearize nonlinear dynamical systems, making their spectral information of crucial interest. Numerous algorithms have been developed to approximate these spectral properties, and Dynamic Mode Decomposition (DMD) stands out as the poster child of projection-based methods. Although the Koopman operator itself is linear, the fact that it acts in an infinite-dimensional space of observables poses challenges. These include spurious modes, essential spectra, and the verification of Koopman mode decompositions. While recent work has addressed these challenges for deterministic systems, there remains a notable gap in verified DMD methods for stochastic systems, where the Koopman operator measures the expectation of observables. We show that it is necessary to go beyond expectations to address these issues. By incorporating variance into the Koopman framework, we address these challenges. Through an additional DMD-type matrix, we approximate the sum of a squared residual and a variance term, each of which can be approximated individually using batched snapshot data. This allows verified computation of the spectral properties of stochastic Koopman operators, controlling the projection error. We also introduce the concept of variance-pseudospectra to gauge statistical coherency. Finally, we present a suite of convergence results for the spectral information of stochastic Koopman operators. Our study concludes with practical applications using both simulated and experimental data. In neural recordings from awake mice, we demonstrate how variance-pseudospectra can reveal physiologically significant information unavailable to standard expectation-based dynamical models.

\keywords{Dynamical systems \and Koopman operator \and Data-driven discovery \and Dynamic mode decomposition \and Spectral theory \and Error bounds \and Stochastic systems}

\subclass{37M10 \and 37H99 \and 37N25 \and 47A10 \and 47B33 \and 65P99}

\end{abstract}

\section{Introduction}

Stochastic dynamical systems are widely used to model and study systems that evolve under the influence of both deterministic and random effects. They offer a framework for understanding, predicting, and controlling systems exhibiting randomness. This makes them invaluable across various scientific, engineering, and economic applications.

Given a state-space $\Omega\subset \mathbb{R}^d$ and a sample space $\Omega_s$, we consider a discrete-time stochastic dynamical system
\begin{equation}\label{eqn:dynamics}
\pmb{x}_{n} = F(\pmb{x}_{n-1},\tau_n), \qquad n\geq 1, \quad \pmb{x}_n \in\Omega,
\end{equation}
where $\{\tau_n\}_{n\in\mathbb{N}}\in\Omega_s$ are independent and identically distributed (i.i.d.) random variables with distribution $\rho$ supported on $\Omega_s$, $\pmb{x}_0\in\Omega$ is an initial condition, and $F : \Omega\times\Omega_s\rightarrow\Omega$ is a function. In many applications, the function $F$ is unknown or cannot be studied directly, which is the premise of this paper. We adopt the notation $F_{\tau}(\pmb{x})=F(\pmb{x},\tau)$ for convenience and express $\pmb{x}_{n} = (F_{\tau_n}\circ \cdots \circ F_{\tau_1})(\pmb{x}_0)$, where `$\circ$' denotes the composition of functions.

With the assumptions above, equation \eqref{eqn:dynamics} describes a discrete-time Markov process. For such systems, the Kolmogorov backward equation governs the evolution of an observable \cite{kolmogoroff1931analytischen,givon2004extracting}, with the right-hand side defined as the stochastic Koopman operator \cite{mezic2005spectral}. The works \cite{mezic2005spectral,mezic2004comparison} have spurred increased interest in the data-driven approximation of both deterministic and stochastic Koopman operators and in analyzing their spectral properties \cite{mezicAMS,brunton2021modern,kosticlearning}. Prominent applications span a variety of fields including fluid dynamics \cite{schmid2010dynamic,rowley2009spectral,mezic2013analysis,giannakis2018koopman}, epidemiology \cite{proctor2015discovering}, neuroscience \cite{brunton2016extracting,casorso2019dynamic,marrouch2020data}, finance \cite{mann2016dynamic}, robotics \cite{berger2015estimation,bruder2019modeling}, power systems \cite{susuki2011coherent,susuki2011nonlinear}, and molecular dynamics \cite{nuske2014variational,klus2018data,schwantes2015modeling,schwantes2013improvements}.

Although the function $F$ is usually nonlinear, the stochastic Koopman operator is always \textit{linear}; however, it operates on an infinite-dimensional space of observables. Of particular interest is the spectral content of the Koopman operator near the unit circle, which corresponds to slow subspaces encapsulating the long-term dynamics. If finite-dimensional eigenspaces can capture this spectral content effectively, they can serve as a finite-dimensional approximation. Numerous algorithms have been developed to approximate the spectral properties of Koopman operators \cite{budivsic2012applied,mezic2022numerical,mauroy2012use,brunton2017chaos,giannakis2019data,arbabi2017study,das2021reproducing,korda2020data,arbabi2017ergodic,mezic2013analysis}. Among these, Dynamic Mode Decomposition (DMD) is particularly popular \cite{kutz2016dynamic}. Initially introduced in the fluids community \cite{schmid2010dynamic,schmid2009dynamic}, DMD's connection to the Koopman operator was established in \cite{rowley2009spectral}. Since then, several extensions and variants of DMD have been developed \cite{chen2012variants,proctor2016dynamic,baddoo2021physics,colbrook2022mpedmd,williams2015data,williams2015kernel}, including methods tailored for stochastic systems \cite{vcrnjaric2020koopman,sinha2020robust,zhang2022koopman,wanner2022robust}.

At its core, DMD is a projection method. It is widely recognized that achieving convergence and meaningful applications of DMD can be challenging due to the infinite-dimensional nature of Koopman operators \cite{budivsic2012applied,williams2015data,colbrook2021rigorous,kaiser2017data}. Challenges include the presence of spurious (unphysical) modes resulting from projection, essential spectra,\footnote{For an illustrative example of a transition operator with non-trivial essential spectra, refer to \cite{atchade2007geometric}. If the operator in question is either self-adjoint or an $L^2$ isometry, the methodologies described in \cite{colbrook2021computing,SpecSolve} and \cite{colbrook2021rigorous} respectively, can be applied to compute spectral measures.} the absence of non-trivial finite-dimensional invariant subspaces, and the verification of Koopman mode decompositions (KMDs). Residual Dynamic Mode Decomposition (ResDMD) has been introduced to address these issues for deterministic systems \cite{colbrook2021rigorous,colbrook2023residual}. ResDMD facilitates a data-driven approach to compute residuals associated with the full infinite-dimensional Koopman operator, thus enabling the computation of spectral properties with controlled errors and the verification of learned dictionaries and KMDs. Despite the evident importance of analyzing stochastic systems through the Koopman perspective, similar verified DMD methods in this setting are absent.

This paper presents several infinite-dimensional techniques for the data-driven analysis of stochastic systems. The central concept we explore is going beyond expectations to include higher moments within the Koopman framework. Figure \ref{fig:variance motivation} illustrates this point by depicting the evolution of two eigenfunctions associated with the stochastic Van der Pol oscillator (detailed in Section~\ref{sec:stoch_vdp}), alongside the expectation determined by the stochastic Koopman operator. Both eigenvalues and eigenfunctions are computed with a negligible projection error.\footnote{Here, 'projection error' refers to the error incurred when projecting the infinite-dimensional Koopman operator onto a finite-dimensional space of observables.} Notably, although both corresponding eigenvalues oscillate at the same frequency due to having identical arguments, the variances of the trajectories exhibit significant differences. This divergence is quantified by what we define as a \textit{variance residual} (see Section \ref{sec:stochastic_resdmd_def}).

\begin{figure}
\centering
\includegraphics[width=0.4\textwidth]{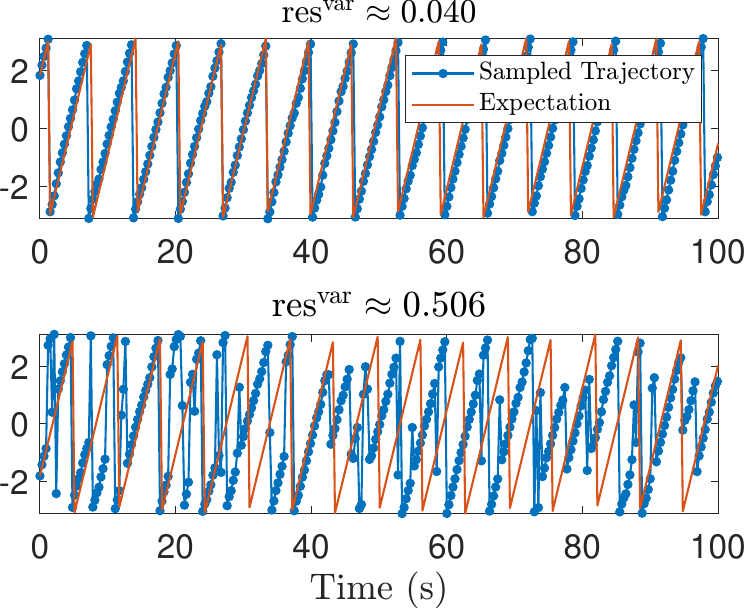}
\caption{The evolution of two eigenfunctions on the attractor of the stochastic Van der Pol oscillator from Section \ref{sec:stoch_vdp}. The plots show the arguments. In blue, we see a sample of the true trajectories, while the expected values predicted from the stochastic Koopman operator are shown in red. Top: Eigenfunction associated with $m=0$ and $k=1$ in Table \ref{tab:SDEtab}. The variance residual is small, and trajectories hug the expectation closely. Bottom: Eigenfunction associated with $m=1$ and $k=1$ in Table \ref{tab:SDEtab}. The variance residual is large, and trajectories deviate from the expectation.}
\label{fig:variance motivation}
\end{figure}

\subsection{Contributions}

The contributions of our paper are as follows:
\begin{itemize}
\item \textbf{Variance Incorporation:} We integrate the concept of variance into the Koopman framework and establish its relationship with batched Koopman operators. Proposition \ref{IMSE_prop} decomposes a mean squared Koopman error into an infinite-dimensional residual and a variance term. Additionally, we present methodologies (see Algorithms \ref{alg:stoch_resDMD1} and \ref{alg:stoch_resDMD2}) for independently calculating these components, thereby enhancing the understanding of the spectral properties of the Koopman operator and the deviation from mean dynamics.
\item \textbf{Variance-Pseudospectra:} We introduce a novel concept of pseudospectra, termed \textit{variance-pseudospectra} (see Definition \ref{def:pseudospectrum_stoch}), which serves as a measure of statistical coherency.\footnote{In the setting of dynamical systems, coherent sets or structures are subsets of the phase space where elements (e.g., particles, agents, etc.) exhibit similar behavior over some time interval. This behavior remains relatively consistent despite potential perturbations or the chaotic nature of the system. In essence, within a coherent structure, the dynamics of elements are closely linked and evolve coherently.} We also offer algorithms for computing these pseudospectra (see Algorithms \ref{alg:res_pseudospec1} and \ref{alg:res_pseudospec2}) and prove their convergence.
\item \textbf{Convergence Theory:} Section \ref{sec:theory} of our paper is dedicated to proving a suite of convergence theorems. These pertain to the spectral properties of stochastic Koopman operators, the accuracy of KMD forecasts, and the derivation of concentration bounds for estimating Koopman matrices from a finite set of snapshot data.
\end{itemize}

Various examples are given in Section \ref{sec:examples} and code is available at: \textcolor[rgb]{0,0,1}{https://github.com/MColbrook/Residual-Dynamic-Mode-Decomposition}.

\subsection{Previous work}

Existing literature on stochastic Koopman operators primarily addresses the challenge of noisy observables in extended dynamic mode decomposition (EDMD) methodologies~\cite{wanner2022robust}, and in techniques for debiasing DMD~\cite{hemati2017biasing,dawson2016characterizing,takeishi2017subspace}. A related concern is the estimation error in Koopman operator approximations due to the finite nature of data sets. This issue is present in both deterministic and stochastic scenarios. As \cite{williams2015data} describes, EDMD converges with large data sets to a Galerkin approximation of the Koopman operator. The work in \cite{mollenhauer2022kernel} thoroughly analyzes kernel autocovariance operators, including nonasymptotic error bounds under classical ergodic and mixing assumptions. In \cite{nuske2023finite}, the authors offer the first comprehensive probabilistic bounds on the finite-data approximation error for truncated Koopman generators in stochastic differential equations (SDEs) and nonlinear control systems. They examine two scenarios: (1) i.i.d. sampling and (2) ergodic sampling, with the latter assuming exponential stability of the Koopman semigroup. Additionally, the variational approach to conformational dynamics (VAC), which bears similarities to DMD, is known for providing spectral estimates of time-reversible processes that result in a self-adjoint transition operator. The connection of VAC with Koopman operators is detailed in \cite{webber2021error}, and the approximation of spectral information with error bounds is discussed in \cite{klus2018data}.

\subsection{Data-driven setup}

We present data-driven methods that utilize a dataset of ``snapshot'' pairs alongside a dictionary of observables. While numerous approaches for selecting a dictionary exist in the literature~\cite{williams2015kernel,williams2015data,coifman2006diffusion,giannakis2012nonlinear,ulam1960collection,vitalini2015basis,wanner2022robust}, this topic is not the primary focus of our current study.\footnote{ResDMD has been shown to effectively verify learned dictionaries in deterministic dynamical systems~\cite{colbrook2023residual}.} Following the methodology outlined in \cite{tu2014dynamic}, we consider our given data to consist of pairs of snapshots, which are
\begin{equation}
\label{eq:trajData}
\texttt{S}=\left\{(\pmb{x}^{(m)},\pmb{y}^{(m)})\right\}_{m=1}^M,\quad \pmb{y}^{(m)}=F(\pmb{x}^{(m)},\tau_m).
\end{equation}
Unlike in deterministic systems, for stochastic systems, it can be beneficial for $\texttt{S}$ to include the same initial condition $\pmb{x}^{(m)}$ multiple times, as each execution of the dynamics yields an independent realization of a trajectory. We say that $\texttt{S}$ is $M_1$-\textit{batched} if it can be split into $M_1$ subsets such that
\begin{align*}
&\texttt{S}=\cup_{j=1}^{M_1}\texttt{S}_j,\\
&\texttt{S}_j=\{(\pmb{x}^{(j)},\pmb{y}^{(j,k)}):k=1,\ldots,M_2,\pmb{y}^{(j,k)}=F(\pmb{x}^{(j)},\tau_{j,k})\}.
\end{align*}
In other words, for each $\pmb{x}^{(j)}$, we have multiple realizations of $F_\tau(\pmb{x}^{(j)})$. Using batched data, we can approximate higher-order stochastic Koopman operators representing the moments of the trajectories. An unbatched dataset can be adapted to approximate a batched dataset by categorizing or ``binning'' the $\pmb{x}$ points in the snapshot data. In practical scenarios, one may encounter a combination of both batched and unbatched data. Depending on the type of snapshot data used, Galerkin approximations of stochastic Koopman operators can be achieved in the limit of large datasets (as discussed in Section \ref{sec:EDMD}).

\section{Mathematical Preliminaries}\label{sec:preliminary}

This section discusses several foundational concepts upon which our paper builds.

\subsection{The stochastic Koopman operator}

Let $g:\Omega\rightarrow \mathbb{C}$ be a function, commonly called an \textit{observable}. Given an initial condition $\pmb{x}_0\in\Omega$, measuring the initial state of the dynamical system through $g$ yields the value $g(\pmb{x}_0)$. One time-step later, the measurement $g(\pmb{x}_1) = g(F_\tau(\pmb{x}_0)) = (g\circ F_\tau)(\pmb{x}_0)$ is obtained, where $\tau$ is a realization from a probability distribution supported on $\Omega_s$, i.e., $\tau\sim\rho$. The ``pull-back'' operator, given $g$, outputs the ``look ahead'' measurement function $g\circ F_\tau$. This function is a random variable, and the stochastic Koopman operator is its expectation \cite{mezic2000comparison}:
\begin{equation}\label{def:K_1}
\mathcal{K}_{(1)}[g] = \mathbb{E}_{\tau}\left[g\circ F_\tau\right]=\int_{\Omega_s} g\circ F_\tau\,\mathrm{d}\rho(\tau).  
\end{equation}
Here, $\mathbb{E}_{\tau}$ represents the expectation with respect to the distribution $\rho$. 
The subscript $(1)$ indicates this is the first moment. Throughout the paper, we assume that the domain of the operator $\mathcal{K}_{(1)}$ is $L^2(\Omega,\omega)$, where $\omega$ is a positive measure on $\Omega$. This space is equipped with an inner product and norm, denoted by $\langle \cdot,\cdot\rangle$ and $\|\cdot\|$, respectively. We do not assume that $\mathcal{K}_{(1)}$ is compact or self-adjoint.

We now introduce the batched Koopman operator, designed to capture the variance and other higher-order moments in the trajectories of dynamical systems. For $r\in\mathbb{N}$ and $g:\Omega^{r}\rightarrow \mathbb{C}$, we define
\begin{equation}\label{eqn:def_K_r}
\mathcal{K}_{(r)}[g] = \mathbb{E}_{\tau}\left[g(F_\tau,\ldots,F_\tau)\right],
\end{equation}
where the same realization $\tau\sim\rho$ is used for the $r$ arguments of $g$. Notably, both the classical and the batched versions of the Koopman operators adhere to the semigroup property, as we will demonstrate.
\begin{proposition}\label{prop:HighK}
For any $r,n\in\mathbb{N}$,
$$
\mathcal{K}_{(r)}^n[g]=\mathbb{E}_{\tau_1,\ldots,\tau_n}\left[g(F_{\tau_n}\circ\cdots \circ F_{\tau_1},\ldots,F_{\tau_n}\circ\cdots \circ F_{\tau_1})\right].
$$
\end{proposition}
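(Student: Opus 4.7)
The plan is a straightforward induction on $n$. The base case $n=1$ is literally the definition in \eqref{eqn:def_K_r}, so all the work is in the inductive step. I expect no deep obstacle: the only subtlety is bookkeeping the order of compositions and invoking independence (Fubini) to merge a fresh expectation over $\tau$ with the previously accumulated expectation over $\tau_1,\ldots,\tau_n$.

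Concretely, assume the formula holds at stage $n$ and define
\begin{equation*}
h(\pmb{z}_1,\ldots,\pmb{z}_r)=\mathbb{E}_{\tau_1,\ldots,\tau_n}\bigl[g(G(\pmb{z}_1),\ldots,G(\pmb{z}_r))\bigr],
\end{equation*}
where $G=F_{\tau_n}\circ\cdots\circ F_{\tau_1}$ is the same composition in each slot. Then by the definition of $\mathcal{K}_{(r)}$ and the inductive hypothesis,
\begin{equation*}
\mathcal{K}_{(r)}^{n+1}[g]=\mathcal{K}_{(r)}[h]=\mathbb{E}_{\tau}\bigl[h(F_\tau,\ldots,F_\tau)\bigr].
\end{equation*}
Substituting the definition of $h$ and noting that $G\circ F_\tau = F_{\tau_n}\circ\cdots\circ F_{\tau_1}\circ F_\tau$ appears in each of the $r$ arguments (since the same $\tau$ is used for every slot, as required by \eqref{eqn:def_K_r}), one obtains an iterated expectation.

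Because $\tau,\tau_1,\ldots,\tau_n$ are i.i.d.\ with common distribution $\rho$, Fubini--Tonelli collapses the iterated expectation into a single joint expectation, and relabeling $\tau\mapsto \tau_1$, $\tau_j\mapsto \tau_{j+1}$ yields precisely the claimed formula at stage $n+1$. The induction then closes.

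The main (and minor) obstacle is justifying the use of Fubini, which requires an integrability hypothesis on $g$ along the orbit, e.g.\ that $g$ lies in a space where the iterated pull-backs remain integrable against the product measure $\rho^{\otimes n}$. For $g\in L^2(\Omega^r,\omega^{\otimes r})$ and $\mathcal{K}_{(r)}$ bounded on this space, this is automatic; otherwise one proceeds by approximation with bounded truncations. Beyond this, the argument is purely symbolic, with the key observation being that \eqref{eqn:def_K_r} applies the \emph{same} realization to all $r$ slots, which is what preserves the diagonal structure $g(G,\ldots,G)$ under iteration.
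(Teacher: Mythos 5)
Your proof is correct, but it takes a different route from the paper. The paper's proof is a two-line reduction argument: for $r=1$ it cites an external reference for the semigroup identity, and for $r>1$ it observes that $\mathcal{K}_{(r)}$ is itself the \emph{first-order} stochastic Koopman operator of the auxiliary system $\tilde{F}_\tau(\pmb{z}_1,\ldots,\pmb{z}_r)=(F_\tau(\pmb{z}_1),\ldots,F_\tau(\pmb{z}_r))$ on $\Omega^r$ (with the same single realization of $\tau$ driving every component), so the $r=1$ case applies verbatim. You instead run a self-contained induction on $n$ that works uniformly in $r$: the base case is the definition \eqref{eqn:def_K_r}, and the inductive step writes $\mathcal{K}_{(r)}^{n+1}=\mathcal{K}_{(r)}\circ\mathcal{K}_{(r)}^{n}$, substitutes, and uses independence plus Fubini--Tonelli to collapse the iterated expectation into a joint one, finishing with a relabeling of the noise variables. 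Both arguments are valid. The paper's buys brevity and a conceptually clean reduction (all the content lives in the single observation about the product system), at the price of outsourcing the real computation to a citation. Yours buys self-containedness and makes the mechanism --- diagonal preservation of the same $\tau$ across all $r$ slots, plus independence of the noise sequence --- explicit, at the price of having to note the integrability condition needed to justify Fubini, which you correctly flag and which is automatic under the paper's standing assumption that $\mathcal{K}_{(1)}$ (hence $\mathcal{K}_{(r)}$) is bounded. Your relabeling step is also handled correctly: the fresh $\tau$ in $\mathcal{K}_{(r)}[h]$ enters as the \emph{innermost} composition $F_{\tau_n}\circ\cdots\circ F_{\tau_1}\circ F_\tau$, which shifts to $F_{\tau_{n+1}}\circ\cdots\circ F_{\tau_1}$ after renaming, matching the statement at stage $n+1$.
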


\begin{proof}
For $r=1$, see \cite{vcrnjaric2020koopman}. For $r>1$, note that $\mathcal{K}_{(r)}$ is a first-order Koopman operator of a dynamical system on $\Omega^r$.\qed
\end{proof}

This proposition indicates that $n$ applications of the stochastic Koopman operator yield the expected value of an observable after $n$ time steps. It is crucial to understand that $\mathcal{K}_{(1)}$ only calculates the expected value. To gain insights into the variability around this mean and to understand the projection error inherent in DMD methods, we need to consider higher-order statistics, such as the variance. These aspects are further explored in Section \ref{sec:ResDMD_stochastic}.

\subsection{Extended Dynamic Mode Decomposition}
\label{sec:EDMD}

EDMD is a widely-used method for constructing a finite-dimensional approximation of the Koopman operator $\mathcal{K}_{(1)}$, utilizing the snapshot data $\texttt{S}$ in~\eqref{eq:trajData}. This approach involves projecting the infinite-dimensional Koopman operator onto a finite-dimensional matrix and approximating its entries. For notational simplicity, we will omit the subscript $(1)$ when referring to the Koopman operator in this section. Originally, EDMD assumes that the initial conditions are independently drawn from a distribution $\omega$~\cite{williams2015data}. However, in our adaptation, we apply EDMD to any given $\texttt{S}$, treating the $\pmb{x}^{(m)}$ as quadrature nodes for integration with respect to $\omega$. This flexibility allows us to use different quadrature weights depending on the specific scenario.

One first chooses a dictionary $\{\psi_1,\ldots,\psi_{N}\}$ in the space $L^2(\Omega,\omega)$. This dictionary consists of a list of observables that form a finite-dimensional subspace $V_N=\mathrm{span}\{\psi_1,\ldots,\psi_{N}\}$. EDMD computes a matrix $K\in\mathbb{C}^{N\times N}$ that approximates the action of $\mathcal{K}$ within this subspace. Specifically, the goal is to achieve $K=\mathcal{P}_{V_{N}}\mathcal{K}\mathcal{P}_{V_{N}}^*$, where $\mathcal{P}_{V_{N}}:L^2(\Omega,\omega)\rightarrow V_N$ is the orthogonal projection onto $V_N$. In the Galerkin framework, this equates to:
$$
\langle \mathcal{K}[\psi_j],\psi_i\rangle = \sum_{s=1}^N K_{s,j}\langle \psi_s,\psi_i\rangle, \qquad 1\leq i,j\leq N. 
$$
A matrix $K$ satisfying this relationship is given by
$$ 
K = G^{\dagger}A, \qquad G_{i,j} = \langle\psi_j\,,\psi_i\rangle\,,\quad A_{i,j} = \langle\mathcal{K}[\psi_j]\,,\psi_i\rangle\,.
$$
Commonly, we stack the $\Psi$ and define the feature map
$$
\Psi(\pmb{x})=\begin{bmatrix}\psi_1(\pmb{x}) & \cdots& \psi_N(\pmb{x}) \end{bmatrix}^\top\in\mathbb{C}^{1\times N}\,.
$$
Then, for any $g\in V_N$, we use the shorthand $g=\Psi\pmb{g}$ for $g(\pmb{x}) = \sum_{j=1}^N g_j\psi_j(\pmb{x})$. With the previously defined $K$, the approximation becomes
$$
\mathcal{K}[g](\pmb{x}) \approx \sum_{i=1}^N  \left(\sum_{j=1}^N K_{i,j}g_j\right) \psi_i(\pmb{x})=\Psi(\pmb{x})K\pmb{g}.
$$
The accuracy of this approximation depends on how well $V_N$ can approximate $\mathcal{K}g$.

The entries of the matrices $G$ and $A$ are inner products and must be approximated using the trajectory data $\texttt{S}$. For quadrature weights $\{w_m\}$, we define $\tilde{G}$ as the numerical approximation of $G$:
\begin{equation} 
\tilde{G}_{i,j} = \sum_{m=1}^{M} w_{m} \psi_j(\pmb{x}^{(m)})\overline{\psi_i(\pmb{x}^{(m)})}\approx \langle\psi_j\,,\psi_i\rangle\,= {G}_{i,j}\,.\label{eq:ApproximationOfG}
\end{equation}
The weights $\{w_m\}$ reflect the significance assigned to each snapshot in the dataset, influenced by factors such as data distribution or reliability, which we will explore further. Similarly, for $A$, we define
\begin{equation} 
\tilde{A}_{i,j} = \sum_{m=1}^{M} w_{m} \psi_j(\pmb{y}^{(m)})\overline{\psi_i(\pmb{x}^{(m)})} \approx \langle\mathcal{K}[\psi_j]\,,\psi_i\rangle\,=A_{i,j}\,.
\label{eq:ApproximationOfA}
\end{equation} 
Let $\Psi_X,\Psi_Y\in\mathbb{C}^{M\times N}$ collect the dictionary's evaluations of these samples:
\begin{equation}
\label{PSI_defs}
\Psi_X=\begin{pmatrix}
\Psi^\top(\pmb{x}^{(1)})\\
\vdots\\
\Psi^\top(\pmb{x}^{(M)})
\end{pmatrix}\,,\quad \Psi_Y=\begin{pmatrix}
\Psi^\top(\pmb{y}^{(1)})\\
\vdots\\
\Psi^\top(\pmb{y}^{(M)})
\end{pmatrix}\,,
\end{equation}
and let $W=\mathrm{diag}(w_1,\ldots,w_{M})$. Then we can succinctly write
\begin{equation}\label{eqn:def_AG_tilde}
\tilde{G}=\Psi_X^*W\Psi_X,\quad \tilde{A}=\Psi_X^*W\Psi_Y.
\end{equation}
Throughout this paper, the symbol $\tilde{X}$ denotes an estimation of the quantity $X$. 

Various sampling methods converge in the large data limit, meaning that
\begin{equation}
\label{quad_convergence}
\lim_{M\rightarrow\infty} \tilde{G}=G,\quad \lim_{M\rightarrow\infty} \tilde{A}=A.
\end{equation}
We detail three convergent sampling methods:
\begin{itemize}
	\item[(i)] {\textbf{Random sampling:}} In the initial definition of EDMD, $\omega$ is a probability measure and $\{\pmb{x}^{(m)}\}_{m=1}^M$ are independently drawn according to $\omega$ with each quadrature weight set to $w_m=1/M$. The strong law of large numbers guarantees that~\eqref{quad_convergence} holds with probability one~\cite[Section 3.4]{2158-2491_2016_1_51}\cite[Section 4]{korda2018convergence}. Typically, convergence occurs at a Monte Carlo rate of $\mathcal{O}(M^{-1/2})$~\cite{caflisch1998monte}.
	\item[(ii)] {\textbf{Ergodic sampling:}} If the stochastic dynamical system is ergodic, the Birkhoff--Khinchin theorem \cite[Theorem II.8.1, Corollary 3]{gikhman2004theory} supports convergence using data from a single trajectory for almost every initial point. Specifically, we use:
$$
\pmb{x}^{(m+1)}=F(\pmb{x}^{(m)},\tau_{m}),\quad w_m=1/M.
$$
This sampling method's analysis for stochastic Koopman operators is detailed in~\cite{wanner2022robust}. An advantage is that knowledge of $\omega$ is not required. However, the convergence rate depends on the specific problem~\cite{kachurovskii1996rate}. Note that in an ergodic system, the stochastic Koopman operator is an isometry on $L^1(\Omega,\omega)$ but typically not on $L^2(\Omega,\omega)$.
	\item[(iii)] {\textbf{High-order quadrature:}} When the dictionary and $F$ are sufficiently regular, and the dimension $d$ is not too large, and if we can choose the $\{\pmb{x}^{(m)}\}_{m=1}^{M}$
 , employing a high-order quadrature rule is advantageous. For deterministic systems, this approach can significantly increase convergence rates in~\eqref{quad_convergence}~\cite{colbrook2021rigorous}. In stochastic systems, high-order quadrature applies primarily to batched snapshot data. We may select $\{\pmb{x}^{(j)}\}_{j=1}^{M_1}$ based on an $M_1$-point quadrature rule with associated weights $\{w_j\}_{j=1}^{M_1}$. Convergence is achieved as $M_2\rightarrow\infty$, effectively applying Monte Carlo integration of the random variable $\tau$ over $\Omega_s$ for each fixed $\pmb{x}^{(j)}$.	
\end{itemize}

The convergence described in \eqref{quad_convergence} implies that the eigenvalues obtained through EDMD converge to the spectrum of $\mathcal{P}_{V_{N}}\mathcal{K}\mathcal{P}_{V_{N}}^*$ as $M\rightarrow\infty$. Therefore, approximating the spectrum of $\mathcal{K}$, denoted $\mathrm{Sp}(\mathcal{K})$, by the eigenvalues of $\tilde{K}$ is closely related to the so-called finite section method~\cite{bottcher1983finite}. However, just as the finite section method can be prone to spectral pollution, which refers to the appearance of spurious modes that accumulate even as the size of the dictionary increases, this is also a concern for EDMD~\cite{williams2015data}. Consequently, having a method to validate the accuracy of the proposed eigenvalue-eigenvector pairs becomes crucial, which is one of the key functions of ResDMD.

\subsection{Residual Dynamic Mode Decomposition (ResDMD)}\label{sec:resdmd_recap}

Accurately estimating the spectrum of $\mathcal{K}$ is critical for analyzing dynamical systems. For deterministic systems, ResDMD achieves this goal, providing robust spectral estimates~\cite{colbrook2021rigorous,colbrook2023residual}. Unlike classical DMD methods, ResDMD introduces an additional matrix specifically designed to approximate  $\mathcal{K}^*\mathcal{K}$. This enhancement not only offers rigorous error guarantees for the spectral approximation but also enables a posteriori assessment of the reliability of the computed spectra and Koopman modes. This capability is particularly valuable in addressing issues such as spectral pollution, which are common challenges in DMD-type methods.

ResDMD is built around the approximation of residuals associated with $\mathcal{K}$, providing an error bound. For any given candidate eigenvalue-eigenvector pair $(\lambda,g)$, with $\lambda\in\mathbb{C}$ and $g=\Psi\,\pmb{g}\in V_{N}$, one can consider the relative squared residual as follows:
\begin{align}
\label{residual_form1_OLD}
&\frac{\int_{\Omega}\left|\mathcal{K}[g](\pmb{x})-\lambda g(\pmb{x})\right|^2\,\mathrm{d}\omega(\pmb{x})}{\int_{\Omega}\left|g(\pmb{x})\right|^2\,\mathrm{d}\omega(\pmb{x})}\\
&=\frac{\langle \mathcal{K}[g],\mathcal{K}[g]\rangle -\lambda\langle g,\mathcal{K}[g]\rangle -\overline{\lambda}\langle \mathcal{K}[g],g\rangle+|\lambda|^2\langle g,g\rangle}{\langle g,g\rangle}.\notag
\end{align}
This pair $(\lambda,g)$ can be computed either from $K$ or other methods. A small residual means that $\lambda$ can be approximately considered as an eigenvalue of $\mathcal{K}$, with $g$ as the corresponding eigenfunction. The relative residual in~\eqref{residual_form1_OLD} serves as a measure of the \textit{coherency} of observables, indicating that observables with smaller residuals play a significant role in the dynamics of the system. If the relative (non-squared) residual is bounded by $\epsilon$, then $\mathcal{K}^ng=\lambda^n g+\mathcal{O}(n\epsilon)$. In other words, $\lambda$ characterizes the coherent oscillation and the decay/growth in the observable  $g$ with time.

The residual is closely related to the notion of pseudospectra~\cite{trefethen2005spectra}.
\begin{definition}\label{def:spectrum_deterministic}
\textit{For any $\lambda\in\mathbb{C}$, define:
$$
\sigma_{\mathrm{inf}}(\lambda)=\inf\left\{\|\mathcal{K}[g]-\lambda g\|:g{\in}L^2(\Omega,\omega),\|g\|=1\!\right\}.
$$
For $\epsilon>0$, the approximate point\footnote{In the presence of residual spectrum, the full pseudospectrum requires the injection modulus of complex shifts of the adjoint of $\mathcal{K}$. We have refrained from this discussion for the sake of simplicity.} $\epsilon$-pseudospectrum is
$$
\mathrm{Sp}_{\epsilon}(\mathcal{K})=\mathrm{Cl}\left(\left\{\lambda\in\mathbb{C}:\sigma_{\mathrm{inf}}(\lambda)<\epsilon\right\}\right),
$$
where $\mathrm{Cl}$ denotes closure of a set. Furthermore, we say that $g$ is a $\epsilon$-pseudoeigenfunction if there exists $\lambda\in\mathbb{C}$ such that the relative squared residual in~\eqref{residual_form1_OLD} is bounded by $\epsilon^2$.}
\end{definition}

To compute~\eqref{residual_form1_OLD}, notice that three of the four inner products appearing in the numerator are:
\begin{equation}\label{eqn:def_AG}
\langle \mathcal{K}[g],g\rangle=\pmb{g}^*A\pmb{g},\;\langle g,\mathcal{K}[g]\rangle=\pmb{g}^*A^*\pmb{g},\;  \langle g,g\rangle=\pmb{g}^*G\pmb{g},
\end{equation}
with $A,G$ numerically approximated by EDMD~\eqref{eqn:def_AG_tilde}. Hence, the success of the computation relies on finding a numerical approximation to $\langle \mathcal{K}[g],\mathcal{K}[g]\rangle$. To that end, we deploy the same quadrature rule discussed in~\eqref{eq:ApproximationOfG}-\eqref{eq:ApproximationOfA} and set
\begin{equation}\label{eqn:def_L}
L=[L_{i,j}]\,,\quad L_{i,j} = \langle\mathcal{K}[\psi_j],\mathcal{K}[\psi_i]\rangle,\quad \tilde{L}=\Psi_Y^*W\Psi_Y\,,
\end{equation}
then $\langle \mathcal{K}[g],\mathcal{K}[g]\rangle\approx \pmb{g}^*\Psi_Y^*W\Psi_Y\pmb{g}=\pmb{g}^*\tilde{L}\pmb{g}$. We obtain a numerical approximation of~\eqref{residual_form1_OLD} as
\begin{equation}
\label{ResDMD_explanation}
[\mathrm{res}(\lambda,g)]^2=\frac{\pmb{g}^*\left[\tilde{L}- \lambda\tilde{A}^* - \overline{\lambda}\tilde{A} + |\lambda|^2\tilde{G}\right]\pmb{g}}{\pmb{g}^*\tilde{G}\pmb{g}}. 
\end{equation}
The matrix $L$ introduced by ResDMD formally corresponds to an approximation of $\mathcal{K}^*\mathcal{K}$. The computation utilizes the same dataset as that employed for $\tilde{G}$ and $\tilde{A}$ and is computationally efficient to construct. The work presented in~\cite{colbrook2021rigorous} demonstrates that the approximation outlined in~\eqref{ResDMD_explanation} can be effectively used in various algorithms for rigorously computing the spectra and pseudospectra of $\mathcal{K}$ for deterministic systems. However, these results from~\cite{colbrook2021rigorous} are not directly applicable to stochastic systems.

\section{Variance from the Koopman perspective}\label{sec:ResDMD_stochastic}

When analyzing a system with inherent stochasticity, basing conclusions only on the mean trajectory can lead to misleading interpretations, as illustrated in Figure \ref{fig:variance motivation}. To achieve a more accurate statistical understanding of such systems, it is crucial to quantify how much and in what ways the trajectory deviates from this mean. This need for a more comprehensive analysis underpins our exploration into quantifying the variance.

\subsection{Variance via Koopman operators}
\label{sec:var}
For any observable $g\in L^2(\Omega,\omega)$ and $\pmb{x}\in\Omega$, $g(F_\tau(\pmb{x}))$ is a random variable. One can define its moments:
$$
\mathbb{E}_{\tau}[(g(F_\tau(\pmb{x})))^r]=\int_{\Omega_s} [g(F_\tau(\pmb{x}))]^r\,\mathrm{d}\rho(\tau),\quad r\in\mathbb{N}.
$$
Recalling the definitions in~\eqref{eqn:def_K_r}, this becomes:
$$
\mathbb{E}_{\tau}[(g(F_\tau(\pmb{x})))^r]=\mathcal{K}_{(r)}[g\otimes \cdots \otimes g](\pmb{x},\ldots,\pmb{x})\,.
$$
This means that the $r$-th order Koopman operator directly computes the moments of the trajectory. In particular, the combination of the first and the second moment provides the following variance term:
\begin{align*}
\text{Var}_{\tau}[g(F_\tau(\pmb{x}))]&= \mathbb{E}_\tau\left[|g(F_\tau(\pmb{x}))|^2\right]-|\mathbb{E}_\tau[g(F_\tau(\pmb{x}))]|^2\\
&= \mathcal{K}_{(2)}[g\otimes\overline{g}](\pmb{x},\pmb{x})-|\mathcal{K}_{(1)}[g](\pmb{x})|^2\,.
\label{eq:VarianceKoopman} 
\end{align*}
We integrate the local definition of variance over the entire domain to define:
\begin{align}
\text{Var}_{\tau}[g(F_\tau)]&= \int_\Omega\text{Var}_{\tau}[g(F_\tau(\pmb{x})]\,\mathrm{d}\omega(\pmb{x}).
\end{align}

The following proposition provides a Koopman analog of decomposing an integrated mean squared error (IMSE).
\begin{proposition}\label{IMSE_prop}
Let $g,h\in L^2(\Omega,\omega)$, then
\begin{equation}
\label{IMSE_decomp}
\begin{split}
&\mathbb{E}_{\tau}\left[\|g\circ F_\tau +h\|^2\right]\\
&=\|\mathcal{K}_{(1)}[g]+h\|^2+\int_{\Omega}\mathrm{Var}_{\tau}\left[\left(g\circ F_\tau\right)(\pmb{x})\right]\,\mathrm{d}\omega(\pmb{x}).
\end{split}
\end{equation}
\end{proposition}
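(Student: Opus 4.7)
The plan is to reduce the claim to the elementary, scalar-valued variance identity $\mathbb{E}[|X+c|^2]=|\mathbb{E}[X]+c|^2+\mathrm{Var}[X]$ (valid for a complex random variable $X$ and deterministic $c\in\mathbb{C}$), applied pointwise in $\pmb{x}\in\Omega$, and then integrate in $\pmb{x}$. The only analytic ingredient beyond that is an exchange of the two expectations (the integral over $\Omega$ with respect to $\omega$ and the integral over $\Omega_s$ with respect to $\rho$), which will be justified by Fubini--Tonelli since the integrand is nonnegative.

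First I would unfold the $L^2(\Omega,\omega)$ norm on the left-hand side, writing
\begin{equation*}
\mathbb{E}_{\tau}\bigl[\|g\circ F_\tau+h\|^2\bigr]
=\mathbb{E}_\tau\!\int_{\Omega}\bigl|g(F_\tau(\pmb{x}))+h(\pmb{x})\bigr|^2\,\mathrm{d}\omega(\pmb{x}).
\end{equation*}
Since the integrand is nonnegative, Tonelli's theorem lets me swap the order of integration and move $\mathbb{E}_\tau$ inside the $\pmb{x}$-integral. Now for each fixed $\pmb{x}$, $X(\tau):=g(F_\tau(\pmb{x}))$ is a scalar random variable and $c:=h(\pmb{x})$ is a (deterministic) complex constant, so the elementary expansion
\begin{equation*}
\mathbb{E}_\tau\bigl[|X+c|^2\bigr]
=\mathbb{E}_\tau\bigl[|X|^2\bigr]+2\,\mathrm{Re}\bigl(\overline{c}\,\mathbb{E}_\tau[X]\bigr)+|c|^2
=\bigl|\mathbb{E}_\tau[X]+c\bigr|^2+\mathrm{Var}_\tau[X]
\end{equation*}
applies. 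Substituting $\mathbb{E}_\tau[X]=\mathcal{K}_{(1)}[g](\pmb{x})$ (by the definition \eqref{def:K_1}) and $c=h(\pmb{x})$ gives, for each $\pmb{x}$,
\begin{equation*}
\mathbb{E}_\tau\bigl[|g(F_\tau(\pmb{x}))+h(\pmb{x})|^2\bigr]
=\bigl|\mathcal{K}_{(1)}[g](\pmb{x})+h(\pmb{x})\bigr|^2+\mathrm{Var}_\tau\bigl[(g\circ F_\tau)(\pmb{x})\bigr].
\end{equation*}

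Finally, I would integrate this identity over $\Omega$ with respect to $\omega$. The first term on the right becomes $\|\mathcal{K}_{(1)}[g]+h\|^2$ by definition of the $L^2(\Omega,\omega)$ norm, and the second term is exactly the integrated variance appearing in \eqref{IMSE_decomp}, completing the proof.

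The only step that really needs care is the justification of integrability: to conclude that $\mathcal{K}_{(1)}[g]\in L^2(\Omega,\omega)$ and that the integrated variance is finite, I would note that both follow from the nonnegativity of the integrand (Tonelli gives the identity in $[0,\infty]$), together with the fact that the left-hand side is finite whenever $g\circ F_\tau\in L^2(\Omega,\omega)$ almost surely in $\tau$ with bounded expected norm, which is implicit in $\mathcal{K}_{(1)}$ mapping $L^2(\Omega,\omega)$ into itself. This is the one place where domain/boundedness assumptions enter; otherwise the proof is a direct consequence of the scalar bias--variance decomposition.
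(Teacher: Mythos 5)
Your proposal is correct and follows essentially the same route as the paper: expand $|g(F_\tau(\pmb{x}))+h(\pmb{x})|^2$ pointwise in $\pmb{x}$, recognize the scalar bias--variance decomposition, and integrate over $\Omega$. The only difference is that you make the Fubini--Tonelli justification for swapping $\mathbb{E}_\tau$ with the $\omega$-integral explicit, a point the paper passes over silently; this is a welcome but minor addition.
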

\begin{proof}
We expand $|g(F_\tau(\pmb{x}))+h(\pmb{x})|^2$ for a fixed $\pmb{x}\in\Omega$ and take expectations to find that
\begin{align*}
&\mathbb{E}_{\tau}\left[ |g(F_\tau(\pmb{x}))+h(\pmb{x})|^2\right]\\
&{=}\mathbb{E}_{\tau}\left[|g(F_\tau(\pmb{x}))|^2\right]{+}\mathcal{K}_{(1)}[g](\pmb{x})\overline{h(\pmb{x})}{+}h(\pmb{x})\overline{\mathcal{K}_{(1)}[g](\pmb{x})}{+}|h(\pmb{x})|^2\\
&{=}|\mathcal{K}_{(1)}[g](\pmb{x})+h(\pmb{x})|^2+\mathbb{E}_{\tau}\left[|g(F_\tau(\pmb{x}))|^2\right]-\left|\mathbb{E}_{\tau}\left[g(F_\tau(\pmb{x}))\right]\right|^2.
\end{align*}
The result now follows by integrating over $\pmb{x}$ with respect to the measure $\omega$.\qed
\end{proof}

Similarly, for any two functions $g,h\in L^2(\Omega,\omega)$, we define the covariance:
\begin{equation}
\label{cov_mat_def}
\mathcal{C}(g,h){=}\int_{\Omega}\mathbb{E}_{\tau}[(g\circ F_\tau{-}\mathcal{K}_{(1)}[g])\overline{(h\circ F_\tau{-}\mathcal{K}_{(1)}[h])}]\,\mathrm{d}\omega(\pmb{x})
\end{equation}
and obtain the following similar result using covariance:
\begin{align*}
\int_{\Omega} \mathbb{E}_{\tau}[g(F_\tau(\pmb{x}))\overline{h(F_\tau(\pmb{x}))}] \,\mathrm{d} \omega(\pmb{x})
=\langle \mathcal{K}[g],\mathcal{K}[h] \rangle+ \mathcal{C}(g,h)\,.
\end{align*}
Proposition \ref{IMSE_prop} is analogous to the decomposition of an IMSE and is practically useful. Suppose we use an observation $h$ to approximate $-g\circ F_\tau$, in an attempt to minimize $\|g\circ F_\tau +h\|^2$. An unbiased estimator is $-\mathcal{K}_{(1)}[g]$; however, this approximation will not be perfect due to the variance term in~\eqref{IMSE_decomp}. Therefore, there is a variance-residual tradeoff for stochastic Koopman operators. Depending on the type of trajectory data collected, one can approximate the quantities $\mathbb{E}_{\tau}\left[\|g\circ F_\tau +h\|^2\right]$ and $\|\mathcal{K}_{(1)}[g]+h\|^2$  in~\eqref{IMSE_decomp} and hence, estimate the third variance term.

\begin{example}[Circle map]
\label{example:circle_map}
\textit{Let $\Omega=[0,1]_{\mathrm{per}}$ be the periodic interval and consider
$$
F(\pmb{x},\tau)=\pmb{x}+c+f(\pmb{x})+\tau \,\,\,\,\,\,\mathrm{mod}(1),
$$
where $\Omega_s=[0,1]_{\mathrm{per}}$, $\rho$ is absolutely continuous, and $c$ is a constant. Let $\psi_j(\pmb{x})=e^{2\pi i j\pmb{x}}$ for $j\in\mathbb{Z}$. Then
\begin{align}
\mathcal{K}_{(1)}[\psi_j](\pmb{x})=\psi_j(\pmb{x})e^{2\pi i jf(\pmb{x})}e^{2\pi i jc}\int_{\Omega_s} e^{2\pi i j\tau}\,\mathrm{d}\rho(\tau).
\end{align}
Define the constants
$$
\alpha_j=e^{2\pi i jc}\int_{\Omega_s} e^{2\pi i j\tau}\,\mathrm{d}\rho(\tau).
$$
Let $D$ be the operator that multiplies each $\psi_j$ by $\alpha_j$. Then $\mathcal{K}_{(1)}=T D$, where $T$ is the Koopman operator corresponding to $\pmb{x}\mapsto\pmb{x}+f(\pmb{x})$. Since $\rho$ is absolutely continuous, the Riemann--Lebesgue lemma implies that $\lim_{|j|\rightarrow\infty}\alpha_j=0$ and hence $D$ is a compact operator. It follows that if $T$ is bounded, then $\mathcal{K}_{(1)}$ is a compact operator. A straightforward computation using \eqref{eq:VarianceKoopman} shows that
\begin{equation}
\label{circle_var_no_f}
\int_{\Omega}\text{\rm Var}_{\tau}[\psi_j(F_\tau(\pmb{x}))]\,\mathrm{d}\omega(\pmb{x}) = 1-|\alpha_j|^2.
\end{equation}
For example, if $f=0$, $\mathcal{K}_{(1)}$ has pure point spectrum with eigenfunctions $\psi_j$. However, as $|j|\rightarrow \infty$, the variance converges to one and $\psi_j$ become less statistically coherent. This example is explored further in Section \ref{sec:cat_map}.}\qed
\end{example}

Another immediate application of the variance term is in providing an estimated bound for the Koopman operator prediction of trajectories.
\begin{proposition}\label{Prop:chernoff}
We have
\begin{equation}
\label{chernoff_bound}
\begin{split}
&\mathbb{P}\left(\left|g\circ F_{\tau_n}\circ\cdots\circ F_{\tau_1}(\pmb{x})-\mathcal{K}^n[g](\pmb{x})\right|\geq a\right)\\
&\quad\quad\quad\quad\leq\frac{1}{a^2}\text{Var}_{\tau_1,\ldots,\tau_n} \left[g\circ F_{\tau_n}\circ\cdots\circ F_{\tau_1}(\pmb{x})\right]\\
&\quad\quad\quad\quad=\frac{1}{a^2}\left(\mathcal{K}_{(2)}^n[g\otimes\overline{g}](\pmb{x},\pmb{x})-|\mathcal{K}_{(1)}^n[g](\pmb{x})|^2\right)
\end{split}
\end{equation}
for any $a>0$.
\end{proposition}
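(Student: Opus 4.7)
The proof is a two-line affair combining Chebyshev's (Markov's) inequality with the moment formula for the batched Koopman operator from Proposition \ref{prop:HighK}. Let me lay out the plan.

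First, I would set $Y = g\circ F_{\tau_n}\circ\cdots\circ F_{\tau_1}(\pmb{x})$, viewed as a (complex-valued) random variable in $\tau_1,\ldots,\tau_n$ for the fixed initial point $\pmb{x}\in\Omega$. By the $r=1$ case of Proposition \ref{prop:HighK}, its expectation is precisely $\mathbb{E}_{\tau_1,\ldots,\tau_n}[Y] = \mathcal{K}_{(1)}^n[g](\pmb{x}) = \mathcal{K}^n[g](\pmb{x})$. Chebyshev's inequality, applied to $|Y - \mathbb{E}[Y]|$, then immediately yields the probability bound
$$
\mathbb{P}(|Y - \mathcal{K}^n[g](\pmb{x})| \geq a) \leq \frac{1}{a^2}\,\text{Var}_{\tau_1,\ldots,\tau_n}[Y],
$$
which is the first inequality of \eqref{chernoff_bound}.

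For the equality in the second line, I would expand the variance using $\text{Var}[Y] = \mathbb{E}[|Y|^2] - |\mathbb{E}[Y]|^2$. The squared-modulus term is $|Y|^2 = Y\overline{Y} = g(F_{\tau_n}\circ\cdots\circ F_{\tau_1}(\pmb{x}))\,\overline{g(F_{\tau_n}\circ\cdots\circ F_{\tau_1}(\pmb{x}))}$, which is exactly the tensor-product observable $(g\otimes\overline{g})$ evaluated at the diagonal point $(F_{\tau_n}\circ\cdots\circ F_{\tau_1}(\pmb{x}),F_{\tau_n}\circ\cdots\circ F_{\tau_1}(\pmb{x}))$, with the \emph{same} sequence of noise realizations in each slot. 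This is precisely the setting of Proposition \ref{prop:HighK} with $r=2$: applying it to $g\otimes\overline{g}$ and then evaluating on the diagonal gives
$$
\mathbb{E}_{\tau_1,\ldots,\tau_n}[|Y|^2] = \mathcal{K}_{(2)}^n[g\otimes\overline{g}](\pmb{x},\pmb{x}).
$$
Combining this with $|\mathbb{E}[Y]|^2 = |\mathcal{K}_{(1)}^n[g](\pmb{x})|^2$ produces the closed-form expression on the right-hand side of \eqref{chernoff_bound}.

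There is no real obstacle here beyond bookkeeping: the only subtlety is to note that $\text{Var}[Y]$ for complex-valued $Y$ is defined as $\mathbb{E}[|Y-\mathbb{E}Y|^2]$, so Chebyshev applies verbatim, and that the batched operator $\mathcal{K}_{(r)}$ is defined (via \eqref{eqn:def_K_r}) with the same noise realization across its $r$ slots, which is exactly what is needed so that $\mathcal{K}_{(2)}^n[g\otimes\overline{g}](\pmb{x},\pmb{x})$ recovers the second moment of $Y$ rather than the second moment of an independent copy.
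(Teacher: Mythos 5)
Your proof is correct and follows the same route as the paper's one-line proof, which simply cites Proposition~\ref{prop:HighK} together with the variance identity and ``Chernoff's bound''; you have merely spelled out the details. (You are right, incidentally, that the inequality being used is Chebyshev's rather than Chernoff's, despite the paper's naming.)
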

\begin{proof}
the result follows from combining Proposition \ref{prop:HighK} and \eqref{eq:VarianceKoopman} with Chernoff's bound.\qed
\end{proof}
The bound can be combined with concentration bounds for $\Psi \tilde{K}^n-\mathcal{K}^n$ (see Section \ref{sec:forecast_bound}). 

\subsection{ResDMD in stochastic systems}
\label{sec:stochastic_resdmd_def}

In the deterministic setting, ResDMD provides an efficient way to evaluate the accuracy of candidate eigenpairs through the computation of an additional matrix $L$ in~\eqref{eqn:def_L}. However, what happens in the stochastic setting?

Suppose that $(\lambda, g)$ is a candidate eigenpair of $\mathcal{K}_{(1)}$ with $g\in V_N$. Resembling~\eqref{residual_form1_OLD}, we consider
\begin{equation}
\label{resdmd_stoch_res_res}
\frac{\mathbb{E}_{\tau}\left[\|g\circ F_\tau-\lambda g\|^2\right]}{\|g\|^2}.
\end{equation}
We can write the numerator in terms of $A$, $G$, and $L$, i.e., 
\begin{align*}
\mathbb{E}_{\tau}\left[\|g\circ F_\tau-\lambda g\|^2\right]&=\pmb{g}^*(L-\lambda A^*-\overline{\lambda}A+|\lambda|^2G)\pmb{g}\\
&=\lim_{M\rightarrow\infty}\pmb{g}^*(\tilde{L}-\lambda \tilde{A}^*-\overline{\lambda}\tilde{A}+|\lambda|^2\tilde{G})\pmb{g}.
\end{align*}
Hence, we define
\begin{equation}
\label{eq:abs_res}
[\mathrm{res}^{\mathrm{var}}(\lambda,g)]^2=\frac{\pmb{g}^*\left[\tilde{L}-\lambda \tilde{A}^*-\overline{\lambda}\tilde{A}+|\lambda|^2\tilde{G}\right]\pmb{g}}{\pmb{g}^*\tilde{G}\pmb{g}},
\end{equation}
which furnishes an approximation of~\eqref{resdmd_stoch_res_res}. Setting $h=-\lambda g$ in Proposition~\ref{IMSE_prop}, we see that
\begin{equation} 
\begin{aligned}
&\mathbb{E}_{\tau}\left[\|g\circ F_\tau-\lambda g\|^2\right]=\mathbb{E}_{\tau}\left[\int_{\Omega}|g(F_\tau(\pmb{x}))-\lambda g(\pmb{x})|^2\,\mathrm{d}\omega(\pmb{x})\right]\\
&\quad\quad=\underbrace{\|\mathcal{K}_{(1)}[g]-\lambda g\|^2}_{\text{squared residual}} +\underbrace{\int_{\Omega}\mathrm{Var}_{\tau}\left[g(F_\tau(\pmb{x}))\right]\,\mathrm{d}\omega(\pmb{x})}_{\text{integrated variance of $g\circ F_\tau$}}.
\end{aligned}
\label{eq:ExpectationVarianceRelation} 
\end{equation} 
Thus, $\mathrm{res}^{\mathrm{var}}(\lambda,g)$ approximates the sum of the squared residual $\|\mathcal{K}[g]-\lambda g\|^2$ and the integrated variance of $g\circ F_{\tau}$. For stochastic systems, the integrated variance of $g\circ F_\tau$ is usually non-zero so that
\begin{equation}\label{eq:ResidualFormula}
\lim_{M\rightarrow\infty}\mathrm{res}^{\mathrm{var}}(\lambda,g)> \|\mathcal{K}_{(1)}[g]-\lambda g\|\|g\|.
\end{equation}

Based on this notion and drawing an analogy with Definition~\ref{def:spectrum_deterministic}, we make the following definition.
\begin{definition}\label{def:pseudospectrum_stoch}
\textit{For any $\lambda\in\mathbb{C}$, define:
$$
\sigma_{\mathrm{inf}}^{\mathrm{var}}(\lambda)=\inf\left\{\sqrt{\mathbb{E}_{\tau}\!\left[\|g\circ F_\tau{-}\lambda g\|^2\right]}:g{\in}L^2(\Omega,\omega),\|g\|=1\!\right\}.
$$
For $\epsilon>0$, we define the variance-$\epsilon$-pseudospectrum as
$$
\mathrm{Sp}_{\epsilon}^{\mathrm{var}}(\mathcal{K}_{(1)})=\mathrm{Cl}\left(\left\{\lambda\in\mathbb{C}:\sigma_{\mathrm{inf}}^{\mathrm{var}}(\lambda)<\epsilon\right\}\right),
$$
where $\mathrm{Cl}$ denotes the closure of a set. Furthermore, we say that $g$ is a variance-$\epsilon$-pseudoeigenfunction if there exists $\lambda\in\mathbb{C}$ such that
$\sqrt{\mathbb{E}_{\tau}\!\left[\|g\circ F_\tau{-}\lambda g\|^2\right]}\leq\epsilon$.}
\end{definition}

Superficially, this definition is a straightforward extension of Definition~\ref{def:spectrum_deterministic}. However, there are some essential differences. Both the conceptual understanding and the computation methods need to be modified.

First, the relation~\eqref{eq:ExpectationVarianceRelation} shows that $\mathrm{Sp}_{\epsilon}^{\mathrm{var}}(\mathcal{K}_{(1)})$ takes into account uncertainty through the variance term. Hence, the variance-pseudospectrum provides a notion of \textit{statistical coherency}. Furthermore, comparing Definition~\ref{def:spectrum_deterministic} and Definition~\ref{def:pseudospectrum_stoch}, we have
$$
\mathrm{Sp}_{\epsilon}^{\mathrm{var}}(\mathcal{K}_{(1)})\subset \mathrm{Sp}_{\epsilon}(\mathcal{K}_{(1)})\,.
$$
If the dynamical system is deterministic, then $\mathrm{Sp}_{\epsilon}^{\mathrm{var}}(\mathcal{K}_{(1)})$ is equal to the approximate point $\epsilon$-pseudospectrum. However, in the presence of variance, they are no longer equal.

Second, the relation~\eqref{eq:ExpectationVarianceRelation} gives a computational surprise. Following the same derivation between~\eqref{residual_form1_OLD}-\eqref{ResDMD_explanation}, with $L$, $A$, and $G$ accordingly adjusted through replacing $\mathcal{K}$ by $\mathcal{K}_{(1)}$ in~\eqref{eqn:def_AG}-\eqref{eqn:def_L}, we can still compute the variance-residual term. However, the original residual itself, $\mathrm{res}(\lambda,g)$, needs a modification. Recalling~\eqref{residual_form1_OLD}, in the same spirit of EDMD, if $g\in V_N$, we write
\begin{align*}
&\|\mathcal{K}_{(1)}[g]-\lambda g\|^2\\
&\quad\quad= \langle\mathcal{K}_{(1)}[g]\,,\mathcal{K}_{(1)}[g]\rangle -\lambda \langle g,\mathcal{K}_{(1)}[g]\rangle \\
&\quad\quad\quad\quad\quad\quad-\bar{\lambda} \langle \mathcal{K}_{(1)}[g],g\rangle + |\lambda|^2\langle g,g\rangle \\
&\quad\quad= \pmb{g}^*({H}-\lambda {A}^*-\overline{\lambda}{A}+|\lambda|^2{G})\pmb{g},
\end{align*}
where $H$ is a newly introduced matrix with
\begin{equation}\label{eqn:new_H_matrix}
H_{i,j}=\langle \mathcal{K}_{(1)}[\psi_j],\mathcal{K}_{(1)}[\psi_i] \rangle.
\end{equation}
We employ the quadrature rule for the $\pmb{x}$-domain to approximate this new term. If $\texttt{S}$ is batched with $M_2=2$, then we can form the matrix
$$
\tilde{H}_{i,j}=\sum_{l=1}^{M_1} w_{l} \psi_j(\pmb{y}^{(l,1)})\overline{\psi_i(\pmb{y}^{(l,2)})}.
$$
Since $\tau_{l,1}$ and $\tau_{l,2}$ are independent, we have
\begin{equation}
\label{equ:new_matrix_H}
\lim_{M_1\rightarrow\infty} \tilde{H}_{i,j}=H_{i,j}=\langle \mathcal{K}[\psi_j],\mathcal{K}[\psi_i] \rangle.
\end{equation}
We stress that $\mathcal{K}_{(1)}$ is applied separately to $\psi_i$ and $\psi_j$ and thus $\tau_{l,1}$ and $\tau_{l,2}$ need to be independent realizations.

The convergence in \eqref{equ:new_matrix_H} allows us to compute the spectral properties of $\mathcal{K}_{(1)}$ directly (see Section \ref{sec:algorithms}). In particular, instead of \eqref{ResDMD_explanation}, we now have
\begin{equation}
\label{eq:abs_res2}
[\mathrm{res}(\lambda,g)]^2=\frac{\pmb{g}^*\left[\tilde{H}-\lambda \tilde{A}^*-\overline{\lambda}\tilde{A}+|\lambda|^2\tilde{G}\right]\pmb{g}}{\pmb{g}^*\tilde{G}\pmb{g}}
\end{equation}
and the approximate decomposition
\begin{equation}
\label{eq:res_decomp}
\begin{aligned}
&\int_{\Omega}\mathrm{Var}_{\tau}\left[g(F_\tau(\pmb{x}))\right]\,\mathrm{d}\omega(\pmb{x})=\pmb{g}^*\left(L-H\right)\pmb{g}\\
&\quad\approx \pmb{g}^*\!\left(\tilde{L}{-}\tilde{H}\right)\!\pmb{g}=\|g\|^2\!\left([\mathrm{res}^{\mathrm{var}}(\lambda,g)]^2{-}[\mathrm{res}(\lambda,g)]^2\right)\!,\!
\end{aligned}
\end{equation}
which becomes exact in the large data limit. 

\subsection{Algorithms}\label{sec:algorithms}
In the derivations above, we noticed that one-batched data permits computation only of $\mathrm{res}^\mathrm{var}(\lambda,g)$, while two-batched data also permits the computation of $\mathrm{res}(\lambda,g)$. Algorithms \ref{alg:stoch_resDMD1} and \ref{alg:stoch_resDMD2} approximate the relative residuals of EDMD eigenpairs in the scenario of unbatched and batched data, respectively. In Algorithm \ref{alg:stoch_resDMD2}, we have taken an average when computing $\tilde{A}$ and $\tilde{L}$ to reduce quadrature error, and an average when computing $\tilde{H}$ to ensure that it is self-adjoint (and positive semi-definite). Algorithm \ref{alg:res_pseudospec1} approximates the pseudospectrum and corresponding pseudoeigenfunctions, given batched snapshot data. Algorithm \ref{alg:res_pseudospec2} approximates the variance-pseudospectrum and corresponding variance-pseudoeigenfunctions, and does not need batched data. Note that the computational complexity of all of these algorithms scales the same as those for ResDMD, which is discussed in \cite{colbrook2021rigorous,colbrook2023residual}. In particular, Algorithms \ref{alg:stoch_resDMD1} and \ref{alg:stoch_resDMD2} scale the same as EDMD.

\begin{algorithm}[t]
\textbf{Input:} Snapshot data $\{\pmb{x}^{(m)}\}_{m=1}^{M},\{\pmb{y}^{(m)}\}_{m=1}^{M}$ ($\pmb{y}^{(m)}=F(\pmb{x}^{(m)},\tau_m)$), quadrature weights $\{w_m\}_{m=1}^{M}$, and dictionary of observables $\{\psi_j\}_{j=1}^{N}$.\\
\vspace{-4mm}
\begin{algorithmic}[1]
\State Compute
$$
\tilde{G}=\Psi_X^*W\Psi_X,\quad
\tilde{A}=\Psi_X^*W\Psi_Y,\quad
\tilde{L}=\Psi_Y^*W\Psi_Y,
$$
where $\Psi_X$ and $\Psi_Y$ are given in~\eqref{PSI_defs}. 
\State Solve $\tilde{A}\pmb{g}=\lambda \tilde{G}\pmb{g}$ for eigenpairs $\{(\lambda_j,g_{(j)}=\Psi\pmb{g}_j)\}$.
\State Compute $\mathrm{res}^{\mathrm{var}}(\lambda_j,g_{(j)})$ for all $j$ (see~\eqref{eq:abs_res}).
\end{algorithmic} \textbf{Output:} Eigenpairs $\{(\lambda_j,\pmb{g}_j)\}$ and variance residuals $\{\mathrm{res}^{\mathrm{var}}(\lambda_j,g_{(j)})\}$.
\caption{: \textbf{Eigenpairs and residuals.}}\label{alg:stoch_resDMD1}
\end{algorithm}

\begin{algorithm}[t]
\textbf{Input:} Snapshot data $\{\pmb{x}^{(m)}\}_{m=1}^{M},\{\pmb{y}^{(m,1)},\pmb{y}^{(m,2)}\}_{m=1}^{M}$ (batched), quadrature weights $\{w_m\}_{m=1}^{M}$, dictionary of observables $\{\psi_j\}_{j=1}^{N}$.\\
\vspace{-4mm}
\begin{algorithmic}[1]
\State Compute
\begin{align*}
\tilde{G}&=\Psi_X^*W\Psi_X,\\
\tilde{A}&=\left[\Psi_X^*W\Psi_Y^{(1)}+\Psi_X^*W\Psi_Y^{(2)}\right]/2,\\
\tilde{L}&=\left[{\Psi_Y^{(1)}}^*W\Psi_Y^{(1)}+{\Psi_Y^{(2)}}^*W\Psi_Y^{(2)}\right]/2,\\
\tilde{H}&=\left[{\Psi_Y^{(1)}}^*W\Psi_Y^{(2)}+{\Psi_Y^{(2)}}^*W\Psi_Y^{(1)}\right]/2,
\end{align*}
where $\Psi_X$ and $\Psi_Y^{(i)}$ are given in~\eqref{PSI_defs} and the superscript for $\Psi_Y$ corresponds to each batch of snapshot data. 
\State Solve $\tilde{A}\pmb{g}=\lambda \tilde{G}\pmb{g}$ for eigenpairs $\{(\lambda_j,g_{(j)}=\Psi\pmb{g}_j)\}$.
\State Compute $\mathrm{res}^{\mathrm{var}}(\lambda_j,g_{(j)})$ and $\mathrm{res}(\lambda_j,g_{(j)})$ for all $j$ (see~\eqref{eq:abs_res} and~\eqref{eq:abs_res2}).
\end{algorithmic} \textbf{Output:} Eigenpairs $\{(\lambda_j,\pmb{g}_j)\}$ and residuals $\{\mathrm{res}^{\mathrm{var}}(\lambda_j,g_{(j)}),\mathrm{res}(\lambda_j,g_{(j)})\}$.
\caption{: \textbf{Eigenpairs and residuals (batched data).}}\label{alg:stoch_resDMD2}
\end{algorithm}

\begin{algorithm}[t]
\textbf{Input:} Snapshot data $\{\pmb{x}^{(m)}\}_{m=1}^{M},\{\pmb{y}^{(m,1)},\pmb{y}^{(m,2)}\}_{m=1}^{M}$ (batched), quadrature weights $\{w_m\}_{m=1}^{M}$, dictionary of observables $\{\psi_j\}_{j=1}^{N}$, an accuracy goal $\epsilon>0$, and a grid $z_1,\ldots,z_k\in\mathbb{C}$ (e.g., see \eqref{eq:grid_def}).\\
\vspace{-4mm}
\begin{algorithmic}[1]
\State Compute
\begin{align*}
\tilde{G}&=\Psi_X^*W\Psi_X,\\
\tilde{A}&=\left[\Psi_X^*W\Psi_Y^{(1)}+\Psi_X^*W\Psi_Y^{(2)}\right]/2,\\
\tilde{L}&=\left[{\Psi_Y^{(1)}}^*W\Psi_Y^{(1)}+{\Psi_Y^{(2)}}^*W\Psi_Y^{(2)}\right]/2,\\
\tilde{H}&=\left[{\Psi_Y^{(1)}}^*W\Psi_Y^{(2)}+{\Psi_Y^{(2)}}^*W\Psi_Y^{(1)}\right]/2,
\end{align*}
where $\Psi_X$ and $\Psi_Y^{(i)}$ are given in~\eqref{PSI_defs} and the superscript for $\Psi_Y$ corresponds to each batch of snapshot data.
\State For each $z_j$, compute $r_j = \min_{\pmb{g}\in\mathbb{C}^{N}} \mathrm{res}(z_j,\Psi\pmb{g})$ (see~\eqref{eq:abs_res2}) and the corresponding singular vectors $\pmb{g}_j$. This step is a generalized SVD problem.
\end{algorithmic} \textbf{Output:} $\{z_j: r_j<\epsilon\}$, an estimate of $\mathrm{Sp}_\epsilon(\mathcal{K}_{(1)})$, and pseudoeigenfunctions $\{\pmb{g}_j: r_j<\epsilon\}$.
\caption{: \textbf{Pseudospectra (batched data).}}\label{alg:res_pseudospec1}
\end{algorithm}

\begin{algorithm}[t]
\textbf{Input:} Snapshot data $\{\pmb{x}^{(m)}\}_{m=1}^{M},\{\pmb{y}^{(m)}\}_{m=1}^{M}$ ($\pmb{y}^{(m)}=F(\pmb{x}^{(m)},\tau_m)$), quadrature weights $\{w_m\}_{m=1}^{M}$, dictionary of observables $\{\psi_j\}_{j=1}^{N}$, an accuracy goal $\epsilon>0$, and a grid $z_1,\ldots,z_k\in\mathbb{C}$ (e.g., see \eqref{eq:grid_def}).\\
\vspace{-4mm}
\begin{algorithmic}[1]
\State Compute
\begin{align*}
\tilde{G}&=\Psi_X^*W\Psi_X,\\
\tilde{A}&=\Psi_X^*W\Psi_Y,\\
\tilde{L}&=\Psi_Y^*W\Psi_Y,
\end{align*}
where $\Psi_X$ and $\Psi_Y$ are given in~\eqref{PSI_defs}.
\State For each $z_j$, compute $r_j = \min_{\pmb{g}\in\mathbb{C}^{N}} \mathrm{res}^{\mathrm{var}}(z_j,\Psi\pmb{g})$ (see~\eqref{eq:abs_res}) and the corresponding singular vectors $\pmb{g}_j$. This step is a generalized SVD problem.
\end{algorithmic} \textbf{Output:} $\{z_j: r_j<\epsilon\}$, an estimate of $\mathrm{Sp}_\epsilon^\mathrm{var}(\mathcal{K}_{(1)})$, and variance-pseudoeigenfunctions $\{\pmb{g}_j: r_j<\epsilon\}$.
\caption{: \textbf{Variance-pseudospectra.}}\label{alg:res_pseudospec2}
\end{algorithm}

\section{Theoretical guarantees}\label{sec:theory}

We now prove the correctness of the algorithms mentioned above. Specifically, through a series of theorems, we demonstrate that the computations of $\tilde{A},\tilde{G},\tilde{L}$, and $\tilde{H}$ are accurate and that the spectral estimates can be trusted. To achieve this, we divide the section into three subsections, each focusing on demonstrating the accuracy of the spectrum, the predictive power, and the matrices, respectively. The universal assumptions made in this section are as follows:
\begin{itemize}
	\item $\mathcal{K}_{(1)}$ is bounded.
	\item $\{\psi_j\}_{j=1}^N$ are linearly independent for any finite $N$.
	\item $V_N\subset V_{N+1}$ and the union, $\cup_N V_N$, is dense in $L^2(\Omega,\omega)$.
\end{itemize}
The algorithms and proofs can be readily adapted for an unbounded $\mathcal{K}_{(1)}$. The latter two assumptions can also be relaxed with minor modifications.

\subsection{Accuracy in finding spectral quantities}\label{sec:spectrum}
In this subsection, we prove the convergence of our algorithms. We have already discussed the convergence of residuals in Algorithms \ref{alg:stoch_resDMD1} and \ref{alg:stoch_resDMD2}, under the assumption of convergence of the finite matrices $\tilde{G},\tilde{A},\tilde{L}$, and $\tilde{H}$ in the large data limit. Hence, we focus on Algorithm \ref{alg:res_pseudospec2}. We first define the functions
$$
f_{M,N}(\lambda)=\min_{\pmb{g}\in\mathbb{C}^{N}} \mathrm{res}^{\mathrm{var}}(\lambda,\Psi \pmb{g}),
$$
and note that $r_j=f_{M,N}(z_j)$ in Algorithm~\ref{alg:res_pseudospec2}. Our first lemma describes the limit of these functions as $M\rightarrow\infty$ and $N\rightarrow\infty$.

\begin{lemma}
\label{techno2}
Suppose that
$$
\lim_{M\rightarrow\infty} \tilde{G}=G,\quad \lim_{M\rightarrow\infty} \tilde{A}=A,\quad \lim_{M\rightarrow\infty} \tilde{L}=L,
$$
then $f_N(\lambda)=\lim_{M\rightarrow\infty}f_{M,N}(\lambda)$ exists. Moreover, $f_N$ is a nonincreasing function of $N$ and converges to $\sigma_{\mathrm{inf}}^{\mathrm{var}}$ from above and uniformly on compact subsets of $\mathbb{C}$ as a function of the spectral parameter $\lambda$.
\end{lemma}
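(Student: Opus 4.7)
The plan is to recast the minimization as a generalized Rayleigh quotient problem, pass to the $M\to\infty$ limit to obtain $f_N$, then analyze the behavior as $N\to\infty$ using density of $\cup_N V_N$, and finally promote pointwise convergence to uniform convergence on compacta via Dini's theorem.

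First I would write
$$
[f_{M,N}(\lambda)]^2 = \min_{\pmb{g}\neq 0}\frac{\pmb{g}^*B_M(\lambda)\pmb{g}}{\pmb{g}^*\tilde{G}\pmb{g}},\qquad B_M(\lambda):=\tilde{L}-\lambda\tilde{A}^*-\overline{\lambda}\tilde{A}+|\lambda|^2\tilde{G},
$$
which is the smallest generalized eigenvalue of the pencil $(B_M(\lambda),\tilde{G})$. Linear independence of $\{\psi_j\}_{j=1}^N$ forces $G$ to be positive definite, so $\tilde{G}$ is too for all sufficiently large $M$ by the hypothesised convergence. In the positive-definite regime, this smallest generalized eigenvalue is a continuous function of the two matrix entries, so $f_{M,N}(\lambda)\to f_N(\lambda)$, where
$$
[f_N(\lambda)]^2=\min_{g\in V_N,\;\|g\|=1}\mathbb{E}_\tau\!\left[\|g\circ F_\tau-\lambda g\|^2\right],
$$
using the identification of the bilinear form appearing on the right of~\eqref{eq:abs_res}.

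Monotonicity is immediate from $V_N\subset V_{N+1}$: enlarging the admissible set cannot increase the infimum, so $f_{N+1}(\lambda)\leq f_N(\lambda)$, and similarly $f_N(\lambda)\geq\sigma_{\mathrm{inf}}^{\mathrm{var}}(\lambda)$ because the latter infimizes over the entire unit sphere of $L^2(\Omega,\omega)$. For the matching upper bound in the limit, fix $\epsilon>0$ and pick $g^*\in L^2(\Omega,\omega)$ with $\|g^*\|=1$ and $\mathbb{E}_\tau[\|g^*\circ F_\tau-\lambda g^*\|^2]^{1/2}<\sigma_{\mathrm{inf}}^{\mathrm{var}}(\lambda)+\epsilon$. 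Density of $\cup_N V_N$ produces $g_N\in V_N$ with $g_N\to g^*$ in $L^2$. Continuity of the functional $g\mapsto\mathbb{E}_\tau[\|g\circ F_\tau-\lambda g\|^2]$ on $L^2(\Omega,\omega)$ (the standing boundedness hypothesis needed for $L$ to represent a bounded bilinear form) then gives $f_N(\lambda)\leq \mathbb{E}_\tau[\|(g_N/\|g_N\|)\circ F_\tau-\lambda(g_N/\|g_N\|)\|^2]^{1/2}\to\mathbb{E}_\tau[\|g^*\circ F_\tau-\lambda g^*\|^2]^{1/2}$, whence $\limsup_N f_N(\lambda)\leq\sigma_{\mathrm{inf}}^{\mathrm{var}}(\lambda)+\epsilon$. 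Sending $\epsilon\to 0$ yields $f_N(\lambda)\downarrow\sigma_{\mathrm{inf}}^{\mathrm{var}}(\lambda)$.

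For uniform convergence on a compact $K\subset\mathbb{C}$, I would observe that for each fixed unit-norm $g$, the map $\lambda\mapsto\mathbb{E}_\tau[\|g\circ F_\tau-\lambda g\|^2]^{1/2}$ is $1$-Lipschitz by the triangle inequality in $L^2(\Omega\times\Omega_s,\omega\otimes\rho)$, and taking pointwise infima preserves this Lipschitz constant. Every $f_N$ and $\sigma_{\mathrm{inf}}^{\mathrm{var}}$ is therefore continuous on $\mathbb{C}$, and Dini's theorem upgrades the monotone pointwise convergence on $K$ to uniform convergence. I expect the main subtlety to be in the first step---verifying that $\tilde{G}$ is positive definite for all large $M$ and carefully invoking matrix-perturbation continuity of the smallest generalized eigenvalue---since the remaining density and Dini arguments are routine once the functional $g\mapsto\mathbb{E}_\tau[\|g\circ F_\tau-\lambda g\|^2]$ is known to be continuous on $L^2$.
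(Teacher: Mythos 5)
Your proof is correct and follows essentially the same route as the paper: identify $f_N(\lambda)$ as the infimum of $\sqrt{\mathbb{E}_\tau[\|g\circ F_\tau-\lambda g\|^2]}$ over unit-norm $g\in V_N$ after passing $M\to\infty$, obtain monotonicity from $V_N\subset V_{N+1}$, get the lower bound $f_N\geq\sigma_{\mathrm{inf}}^{\mathrm{var}}$ by definition, match it in the limit via density of $\cup_N V_N$, and upgrade to uniform convergence on compacta by Dini's theorem. You supply two details the paper leaves implicit --- positive definiteness of $\tilde{G}$ for large $M$ justifying the matrix-pencil limit, and the $1$-Lipschitz (hence continuity) argument for $f_N$ and $\sigma_{\mathrm{inf}}^{\mathrm{var}}$ needed for Dini --- but these are refinements, not a different approach.
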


\begin{proof}
The limit $f_N(\lambda)=\lim_{M\rightarrow\infty}f_{M,N}(\lambda)$ follows trivially from the convergence of matrices. Moreover, we have
\begin{align*}
f_N(\lambda)&=\min_{\pmb{g}\in\mathbb{C}^{N}}\sqrt{\frac{\pmb{g}^*(L-\lambda A^*-\overline{\lambda}A+|\lambda|^2G)\pmb{g}}{\pmb{g}^*G\pmb{g}}}\\
&=\inf\left\{\sqrt{\mathbb{E}_{\tau}\left[\|g\circ F_\tau-\lambda g\|^2\right]}:g\in V_N,\|g\|=1\right\}.
\end{align*}
Since ${V}_{N}\subset {V}_{N+1}$, $f_N(\lambda)$ is nonincreasing in $N$. By definition, we also have
$$
f_N(\lambda)\geq \sigma_{\mathrm{inf}}^{\mathrm{var}}(\lambda).
$$
Let $\delta>0$ and choose $g\in L^2(\Omega,\omega)$ such that $\|g\|=1$ and
$$
\sqrt{\mathbb{E}_{\tau}\left[\|g\circ F_\tau-\lambda g\|^2\right]}\leq \sigma_{\mathrm{inf}}^{\mathrm{var}}(\lambda)+\delta.
$$
Since $\cup_N V_N$ is dense in $L^2(\Omega,\omega)$, there exists some $n$ and $g_{n}\in{V}_{n}$ such that $\|g_n\|=1$ and
$$
\sqrt{\mathbb{E}_{\tau}\left[\|g_n\circ F_\tau-\lambda g_n\|^2\right]}\leq  \sqrt{\mathbb{E}_{\tau}\left[\|g\circ F_\tau-\lambda g\|^2\right]}+\delta.
$$
It follows that $f_n(\lambda)\leq \sigma_{\mathrm{inf}}^{\mathrm{var}}(\lambda)+2\delta $. Since this holds for any $\delta>0$, $\lim_{N\rightarrow\infty}f_N(\lambda)=\sigma_{\mathrm{inf}}^{\mathrm{var}}(\lambda)$. Since $\sigma_{\mathrm{inf}}^{\mathrm{var}}(\lambda)$ is continuous in $\lambda$, $f_N$ converges uniformly down to $\sigma_{\mathrm{inf}}^{\mathrm{var}}$ on compact subsets of $\mathbb{C}$ by Dini's theorem.\qed
\end{proof}

Let $\{\mathrm{Grid}(N)=\{z_{1,N},z_{2,N},\ldots,z_{k(N),N}\}\}$ be a sequence of grids, each finite, such that for any $\lambda\in\mathbb{C}$,
$$
\lim_{N\rightarrow\infty}\mathrm{dist}(\lambda,\mathrm{Grid}(N))=0.
$$
For example, we could take
\begin{equation}
\label{eq:grid_def}
\mathrm{Grid}(N)=\frac{1}{N}\left[\mathbb{Z}+i\mathbb{Z}\right]\cap \{z\in\mathbb{C}:|z|\leq N\}.
\end{equation}
In practice, one considers a grid of points over the region of interest in the complex plane. Lemma \ref{techno2} tells us that to study Algorithm \ref{alg:res_pseudospec2} in the large data limit, we must analyze
$$
\Gamma^\epsilon_{N}(\mathcal{K}_{(1)})=\left\{\lambda\in\mathrm{Grid}(N):f_N(\lambda)<\epsilon\right\}.
$$
To make the convergence of Algorithm \ref{alg:res_pseudospec2} precise, we use the Attouch--Wets metric defined by~\cite{beer1993topologies}:
$$
d_{\mathrm{AW}}(C_1,C_2)=\sum_{n=1}^{\infty} 2^{-n}\min\big\{{1,\underset{\left|x\right|\leq n}{\sup}\left|\mathrm{dist}(x,C_1)\!-\!\mathrm{dist}(x,C_2)\right|}\big\},
$$
where $C_1,C_2$ are closed nonempty subsets of $\mathbb{C}$. This metric corresponds to local uniform converge on compact subsets of $\mathbb{C}$. For any closed nonempty sets $C$ and $C_n$, $d_{\mathrm{AW}}(C_n,C)\rightarrow{0}$ if and only if for any $\delta>0$ and $B_m(0)$ (closed ball of radius $m\in\mathbb{N}$ about $0$), there exists $N$ such that if $n>N$ then $C_n\cap B_m(0)\subset{C+B_{\delta}(0)}$ and $C\cap B_m(0)\subset{C_n+B_{\delta}(0)}$. The following theorem contains our convergence result.

\begin{theorem}[Convergence to variance-pseudospectrum]
\label{half_pseudospectrum}
Let $\epsilon>0$. Then, $\Gamma^\epsilon_{N}(\mathcal{K}_{(1)})\subset\mathrm{Sp}_{\epsilon}^{\mathrm{var}}(\mathcal{K}_{(1)})$ and
$$
\lim_{N\rightarrow\infty}d_{\mathrm{AW}}\left(\Gamma^\epsilon_{N}(\mathcal{K}_{(1)}),\mathrm{Sp}_{\epsilon}^{\mathrm{var}}(\mathcal{K}_{(1)})\right)=0.
$$
\end{theorem}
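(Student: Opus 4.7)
The plan is to establish the two ingredients of Attouch--Wets convergence separately, relying on the monotone convergence $f_N \downarrow \sigma_{\mathrm{inf}}^{\mathrm{var}}$ from Lemma \ref{techno2} together with the grid-density hypothesis.

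The inclusion $\Gamma^\epsilon_N(\mathcal{K}_{(1)}) \subset \mathrm{Sp}_\epsilon^{\mathrm{var}}(\mathcal{K}_{(1)})$ is essentially immediate. Lemma \ref{techno2} yields $f_N \geq \sigma_{\mathrm{inf}}^{\mathrm{var}}$ pointwise, so any $\lambda \in \Gamma^\epsilon_N$ satisfies $\sigma_{\mathrm{inf}}^{\mathrm{var}}(\lambda) \leq f_N(\lambda) < \epsilon$, placing $\lambda$ in the open set whose closure defines $\mathrm{Sp}_\epsilon^{\mathrm{var}}$. This also disposes of the ``outer'' half of Attouch--Wets convergence: for every $m$ and every $\delta > 0$, $\Gamma^\epsilon_N \cap B_m(0) \subset \mathrm{Sp}_\epsilon^{\mathrm{var}} \subset \mathrm{Sp}_\epsilon^{\mathrm{var}} + B_\delta(0)$, uniformly in $N$.

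The nontrivial ``inner'' half asks for an $N_0 = N_0(m,\delta)$ such that $\mathrm{Sp}_\epsilon^{\mathrm{var}} \cap B_m(0) \subset \Gamma^\epsilon_N(\mathcal{K}_{(1)}) + B_\delta(0)$ for all $N \geq N_0$. Since $\mathrm{Sp}_\epsilon^{\mathrm{var}} \cap B_m(0)$ is closed and bounded, hence compact, cover it by finitely many balls $B_{\delta/3}(\lambda_k)$ centered at points $\lambda_k \in \mathrm{Sp}_\epsilon^{\mathrm{var}} \cap B_m(0)$, $k = 1,\ldots,K$. For each $k$, the closure definition of $\mathrm{Sp}_\epsilon^{\mathrm{var}}$ supplies $\lambda'_k$ with $|\lambda_k - \lambda'_k| < \delta/3$ and $\sigma_{\mathrm{inf}}^{\mathrm{var}}(\lambda'_k) < \epsilon$; set $\eta_k = \epsilon - \sigma_{\mathrm{inf}}^{\mathrm{var}}(\lambda'_k) > 0$. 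Using pointwise density of the grids together with continuity of $\sigma_{\mathrm{inf}}^{\mathrm{var}}$ (established in the proof of Lemma \ref{techno2}), for all sufficiently large $N$ we can pick $z_k \in \mathrm{Grid}(N)$ with $|z_k - \lambda'_k| < \delta/3$ and small enough that $\sigma_{\mathrm{inf}}^{\mathrm{var}}(z_k) < \epsilon - \eta_k/2$. The set $\{z_k : 1 \leq k \leq K\}$ is finite, so the uniform-on-compacts convergence $f_N \downarrow \sigma_{\mathrm{inf}}^{\mathrm{var}}$ from Lemma \ref{techno2} lets us enlarge $N$ further to ensure $f_N(z_k) < \epsilon$ for every $k$, putting each $z_k$ in $\Gamma^\epsilon_N$. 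For arbitrary $\lambda \in \mathrm{Sp}_\epsilon^{\mathrm{var}} \cap B_m(0)$, pick $k$ with $|\lambda - \lambda_k| < \delta/3$; the triangle inequality delivers $|\lambda - z_k| \leq |\lambda - \lambda_k| + |\lambda_k - \lambda'_k| + |\lambda'_k - z_k| < \delta$, completing the containment.

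The main obstacle is coordinating three approximation scales simultaneously: the finite cover of $\mathrm{Sp}_\epsilon^{\mathrm{var}} \cap B_m(0)$, the pointwise grid density, and the rate of $f_N \downarrow \sigma_{\mathrm{inf}}^{\mathrm{var}}$. Compactness of $\mathrm{Sp}_\epsilon^{\mathrm{var}} \cap B_m(0)$ collapses the problem to finitely many cover centers, and Dini's theorem (invoked inside Lemma \ref{techno2}) upgrades the pointwise monotone convergence of $f_N$ to uniform convergence on compact sets, so a single threshold $N_0$ eventually works for all cover centers at once.
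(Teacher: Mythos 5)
Your proof is correct and follows essentially the same route as the paper's: both rely on Lemma~\ref{techno2} for the pointwise bound $f_N \geq \sigma_{\mathrm{inf}}^{\mathrm{var}}$ and the locally uniform monotone convergence $f_N \downarrow \sigma_{\mathrm{inf}}^{\mathrm{var}}$, on continuity of $\sigma_{\mathrm{inf}}^{\mathrm{var}}$, on grid density, and on compactness of $\mathrm{Sp}_\epsilon^{\mathrm{var}}(\mathcal{K}_{(1)}) \cap B_m(0)$. The only distinction is cosmetic: the paper establishes the inner Attouch--Wets inclusion by contradiction, extracting a convergent subsequence of putative bad points $\lambda_{n_j}$ and constructing nearby grid points that violate the assumption, whereas you argue directly via a finite subcover of $\delta/3$-balls and a single uniform-convergence threshold; both are sound.
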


\begin{proof}
Lemma \ref{techno2} shows that $\Gamma^\epsilon_{N}(\mathcal{K}_{(1)})\subset\mathrm{Sp}_{\epsilon}^{\mathrm{var}}(\mathcal{K}_{(1)})$. To prove convergence, we use the characterization of the Attouch--Wets topology. Suppose that $m$ is large such that $B_m(0)\cap\mathrm{Sp}_{\epsilon}^{\mathrm{var}}(\mathcal{K}_{(1)})\neq\emptyset$. Since $\Gamma^\epsilon_{N}(\mathcal{K}_{(1)})\subset\mathrm{Sp}_{\epsilon}^{\mathrm{var}}(\mathcal{K}_{(1)})$, we clearly have $\Gamma_{N}^{\epsilon}(\mathcal{K}_{(1)})\cap B_m(0)\subset\mathrm{Sp}_{\epsilon}^{\mathrm{var}}(\mathcal{K}_{(1)})$. Hence, we must show that given $\delta>0$, there exists $n_0$ such that if $N>n_0$ then $\mathrm{Sp}_{\epsilon}^{\mathrm{var}}(\mathcal{K}_{(1)})\cap B_m(0)\subset{\Gamma_{N}^{\epsilon}(\mathcal{K}_{(1)})+B_{\delta}(0)}$. Suppose for a contradiction that this statement is false. Then, there exists $\delta>0$, $\lambda_{n_j}\in\mathrm{Sp}_{\epsilon}^{\mathrm{var}}(\mathcal{K}_{(1)})\cap B_m(0)$, and $n_j\rightarrow\infty$ such that
$$
\mathrm{dist}(\lambda_{n_j},\Gamma_{n_j}^{\epsilon}(\mathcal{K}_{(1)}))\geq\delta.
$$
Without loss of generality, we can assume that $\lambda_{n_j}\rightarrow \lambda\in\mathrm{Sp}_{\epsilon}^{\mathrm{var}}(\mathcal{K}_{(1)})\cap B_m(0)$. There exists some $z$ with $\sigma_{\mathrm{inf}}^{\mathrm{var}}(z)<\epsilon$ and $\left|\lambda-z\right|\leq \delta/2$. Let $z_{n_j}\in\mathrm{Grid}(n_j)$ such that $|z-z_{n_j}|\leq \mathrm{dist}(z,\mathrm{Grid}(n_j))+{n_j}^{-1}.$ Since $\sigma_{\mathrm{inf}}^{\mathrm{var}}$ is continuous and $f_N$ converges locally uniformly to $\sigma_{\mathrm{inf}}^{\mathrm{var}}$, we must have $f_{n_j}(z_{n_j})<\epsilon$ for large $n_j$ so that $z_{n_j}\in\Gamma_{n_j}^{\epsilon}(\mathcal{K}_{(1)})$. But
$
\left|z_{n_j}-\lambda\right|\leq \left|z-\lambda\right|+\left|z_{n_j}-z\right|\leq \delta/2 + |z-z_{n_j}|,
$
which is smaller than $\delta$ for large $n_j$, and we reach the desired contradiction.\qed
\end{proof}

\subsection{Error bounds for iterations}
\label{sec:forecast_bound}

We now aim to bound the difference between $\tilde{K}^n$ and $\mathcal{K}^n$, a step crucial for measuring the accuracy of our approximation of the mean trajectories in $L^2(\Omega,\omega)$. This effort, in conjunction with the Chernoff-like bound presented in \eqref{chernoff_bound}, enables us to compute the statistical properties of the trajectories and their forecasts. Our approach to establishing these bounds is twofold. First, we consider the difference between $\tilde{K}^n$ and $\mathcal{K}^n$, taking into account both the estimation errors and the errors intrinsic to the subspace. Subsequently, we establish concentration bounds for the estimation errors of $\tilde{G}$, $\tilde{A}$, and $\tilde{L}$.

\begin{theorem}[Error bound for forecasts]
\label{thm:forecast_bound}
Define the quantities
\begin{align*}
I_G&=G^{\frac{1}{2}}\tilde{G}^{-\frac{1}{2}},\\
\Delta_G&=\|I_G\|\|(I-I_G^{-1})\|+\|(I-I_G)\|,\\
\Delta_A&=\|\mathcal{K}\|(1+\|I_G\|)\|I_G-I\|+\|I_G\|^2\|G^{-\frac{1}{2}}(A-\tilde{A})G^{-\frac{1}{2}}\|.
\end{align*}
Let $g=\sum_{j=1}^N\pmb{g}_j\psi_j\in V_N$ and suppose that
$$
\|\mathcal{K}^n_{(1)}g-\mathcal{P}_{V_N}^*(\mathcal{P}_{V_N}\mathcal{K}_{(1)}\mathcal{P}_{V_N}^*)^ng\|\leq \delta_n(g)\|g\|.
$$
Then
$$
\|\Psi\tilde{K}^n\pmb{g}-\mathcal{K}^n_{(1)}g\|\leq C_n\|g\|,
$$
where
$$
C_n=\left[\frac{\|\mathcal{K}\|^n-\Delta_A^n}{\|\mathcal{K}\|-\Delta_A}\Delta_A(\Delta_G+1)+\|\mathcal{K}\|^n\Delta_G+\delta_n(g)\right]\,.
$$
\end{theorem}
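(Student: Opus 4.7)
The strategy is to peel the error into a subspace-approximation piece and an estimation piece via the triangle inequality, then track how the single-step estimation error (which decomposes into a $G$-part controlled by $\Delta_G$ and an $A$-part controlled by $\Delta_A$) propagates through $n$ iterations via a non-commutative telescoping identity. I begin with
\begin{align*}
\|\Psi\tilde{K}^n\pmb{g} - \mathcal{K}_{(1)}^n g\| \leq \|\Psi(\tilde{K}^n - K^n)\pmb{g}\| + \|\Psi K^n\pmb{g} - \mathcal{K}_{(1)}^n g\|.
\end{align*}
The second term is handled directly by hypothesis: since $\Psi K\pmb{g} = \mathcal{P}_{V_N}^*\mathcal{P}_{V_N}\mathcal{K}_{(1)} g$ for $g=\Psi\pmb{g}\in V_N$, induction gives $\Psi K^n\pmb{g} = \mathcal{P}_{V_N}^*(\mathcal{P}_{V_N}\mathcal{K}_{(1)}\mathcal{P}_{V_N}^*)^n g$, so this summand is bounded by $\delta_n(g)\|g\|$.

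For the estimation part, pass to orthonormalized coordinates $\pmb{u}=G^{1/2}\pmb{g}$, for which the map $\Psi\pmb{v}\mapsto G^{1/2}\pmb{v}$ is an isometry of $V_N$ onto $\mathbb{C}^N$ (since $\|\Psi\pmb{v}\|_{L^2}^2 = \pmb{v}^*G\pmb{v}$). Under this identification, $K$ becomes the symmetrized matrix $\hat{K}=G^{-1/2}AG^{-1/2}$ with $\|\hat{K}\|\leq \|\mathcal{K}\|$, while $\tilde{K}$ becomes $I_G\hat{\tilde{K}}I_G^{-1}$ where $\hat{\tilde{K}}=\tilde{G}^{-1/2}\tilde{A}\tilde{G}^{-1/2}$ is the analogous symmetric matrix built from the estimated Gramians. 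The task is therefore to bound the operator norm of $I_G\hat{\tilde{K}}^n I_G^{-1} - \hat{K}^n$. I split this as
\begin{align*}
I_G\hat{\tilde{K}}^n I_G^{-1} - \hat{K}^n = I_G(\hat{\tilde{K}}^n - \hat{K}^n)I_G^{-1} + \bigl(I_G\hat{K}^n I_G^{-1} - \hat{K}^n\bigr),
\end{align*}
so that the pure $G$-mismatch lives in the second bracket. Rewriting $I_G\hat{K}^n I_G^{-1} - \hat{K}^n = (I_G-I)\hat{K}^n + I_G\hat{K}^n(I_G^{-1}-I)$ and using $\|\hat{K}\|\leq \|\mathcal{K}\|$ recovers exactly the bound $\Delta_G\|\mathcal{K}\|^n$, yielding the second term of $C_n$.

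For the first bracket, apply the standard telescoping identity
\begin{align*}
\hat{\tilde{K}}^n - \hat{K}^n = \sum_{k=0}^{n-1}\hat{\tilde{K}}^{n-1-k}\bigl(\hat{\tilde{K}} - \hat{K}\bigr)\hat{K}^k.
\end{align*}
The single-step perturbation is controlled by splitting
\begin{align*}
\hat{\tilde{K}} - \hat{K} &= \bigl(\tilde{G}^{-1/2}\tilde{A}\tilde{G}^{-1/2} - G^{-1/2}\tilde{A}G^{-1/2}\bigr) + G^{-1/2}(\tilde{A}-A)G^{-1/2},
\end{align*}
substituting $\tilde{G}^{-1/2} - G^{-1/2} = G^{-1/2}(I_G - I)$, and bounding $\|\tilde{A}\|\leq \|A\| + \|\tilde{A}-A\|$ together with $\|G^{-1/2}AG^{-1/2}\|\leq \|\mathcal{K}\|$. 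The first piece produces the $\|\mathcal{K}\|(1+\|I_G\|)\|I_G-I\|$ contribution and the second piece produces the $\|I_G\|^2\|G^{-1/2}(A-\tilde{A})G^{-1/2}\|$ contribution, matching the definition of $\Delta_A$ exactly. Estimating $\|\hat{\tilde{K}}\|^{n-1-k}$ and $\|\hat{K}\|^k$ in the telescope, then collecting the outer $I_G$, $I_G^{-1}$ factors into the $(\Delta_G+1)$ multiplier, gives a geometric sum that evaluates to $\frac{\|\mathcal{K}\|^n-\Delta_A^n}{\|\mathcal{K}\|-\Delta_A}\Delta_A(\Delta_G+1)$, matching the first term of $C_n$.

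The main obstacle is the bookkeeping in the middle step: $K$ and $\tilde{K}$ are self-adjoint with respect to \emph{different} inner products ($G$ versus $\tilde{G}$), so each appearance of $\tilde{K}$ in the telescope introduces change-of-basis factors $I_G^{\pm 1}$ that must be routed cleanly either into $\Delta_G$ (when they survive alone) or into $\Delta_A$ (when they dress the single-step perturbation $\hat{\tilde{K}}-\hat{K}$). Getting the $\|\mathcal{K}\|$ coefficient in $\Delta_A$ to emerge from the right place requires using $\|G^{-1/2}\tilde{A}G^{-1/2}\|\leq \|\mathcal{K}\|+\|G^{-1/2}(\tilde{A}-A)G^{-1/2}\|$ and absorbing the extra term into the existing $\Delta_A$ contribution, rather than producing spurious hybrid norms outside the named quantities.
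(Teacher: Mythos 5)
Your overall architecture matches the paper's: pass to the $G^{1/2}$-orthonormalized coordinates (your $\hat{K},\hat{\tilde{K}}$ are the paper's $T,\tilde{T}$), peel off the subspace error $\delta_n(g)$ by the triangle inequality, and split the estimation error into a Gram-mismatch part (landing on $\|\mathcal{K}\|^n\Delta_G$) plus a telescoped $n$-step difference dressed by change-of-basis factors (landing on the $(\Delta_G+1)$ multiplier). That part of your bookkeeping is sound.

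However, there is a genuine gap in your single-step bound for $\|\hat{\tilde{K}}-\hat{K}\|$. You add and subtract the hybrid $G^{-1/2}\tilde{A}G^{-1/2}$, writing
\begin{equation*}
\hat{\tilde{K}} - \hat{K} = \bigl(\tilde{G}^{-1/2}\tilde{A}\tilde{G}^{-1/2} - G^{-1/2}\tilde{A}G^{-1/2}\bigr) + G^{-1/2}(\tilde{A}-A)G^{-1/2},
\end{equation*}
so the $A$-mismatch appears with coefficient $1$, not $\|I_G\|^2$. Meanwhile the Gram-mismatch piece, after using $\tilde{G}^{-1/2}=I_G^*G^{-1/2}=G^{-1/2}I_G$, is $I_G^* B I_G - B$ with $B=G^{-1/2}\tilde{A}G^{-1/2}$, giving $\|B\|(1+\|I_G\|)\|I_G-I\|$. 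Bounding $\|B\|\le\|\mathcal{K}\|+\|G^{-1/2}(\tilde{A}-A)G^{-1/2}\|$ as you propose yields, in total,
\begin{equation*}
\|\mathcal{K}\|(1+\|I_G\|)\|I_G-I\|+\|G^{-1/2}(\tilde{A}-A)G^{-1/2}\|\bigl[1+(1+\|I_G\|)\|I_G-I\|\bigr].
\end{equation*}
This does \emph{not} match $\Delta_A$, because $1+(1+\|I_G\|)\|I_G-I\|$ is not bounded by $\|I_G\|^2$ in general. For instance, $I_G$ may be a unitary-type perturbation with $\|I_G\|=1$ while $\|I_G-I\|>0$, in which case your coefficient strictly exceeds $1=\|I_G\|^2$. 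So the "absorb the extra term into $\Delta_A$" step you flag as the main obstacle in fact fails.

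The fix is to choose the \emph{other} hybrid intermediate: either add and subtract $\tilde{G}^{-1/2}A\tilde{G}^{-1/2}$ (estimated Gram, true $A$), or equivalently do what the paper does and substitute $\tilde{G}^{-1/2}=I_G^*G^{-1/2}$ \emph{before} splitting $\tilde{A}=A+(\tilde{A}-A)$, i.e., start from the exact identity $\hat{\tilde{K}}=I_G^*G^{-1/2}\tilde{A}G^{-1/2}I_G$. Then the $(\tilde{A}-A)$ term sits inside the $I_G^*(\cdot)I_G$ conjugation and picks up the full $\|I_G\|^2$ coefficient, while the Gram-mismatch term $I_G^*TI_G-T$ involves $T$ (not $\tilde{A}$), for which $\|T\|\le\|\mathcal{K}\|$ holds directly; together these give exactly $\Delta_A$ with no residue.
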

\begin{proof}
We introduce the two matrices
$$
T=G^{-1/2}AG^{-1/2},\quad \tilde{T}=\tilde{G}^{-1/2}\tilde{A}\tilde{G}^{-1/2}.
$$
Note that
\begin{align*}
\|T\|=\sup_{x\in\mathbb{C}^N}\frac{\|TG^{1/2}x\|}{\|G^{1/2}x\|}&=\sup_{x\in\mathbb{C}^N}\frac{\|G^{1/2}Kx\|}{\|G^{1/2}x\|}\\
&=\|\mathcal{P}_{V_{N}}\mathcal{K}\mathcal{P}_{V_{N}}^*\|\leq \|\mathcal{K}\|.
\end{align*}
We can re-write $\tilde{T}$ as
\begin{align*}
\tilde{T}&=I_G^*G^{-1/2}\tilde A G^{-1/2}I_G\\
&=I_G^*TI_G+I_G^*G^{-1/2}(\tilde A-A)G^{-1/2}I_G\\
&=T+(I_G-I)^*TI_G+T(I_G-I)\\
&\quad\quad+I_G^*G^{-1/2}(\tilde A-A)G^{-1/2}I_G.
\end{align*}
It follows that
\begin{align*}
\|T-\tilde{T}\|&\leq \|\mathcal{K}\|(1+\|I_G\|)\|I_G-I\|\\
&\quad\quad+\|I_G\|^2\|G^{-1/2}(A-\tilde{A})G^{-1/2}\|\\
&=\Delta_A.
\end{align*}
We have that
$$
T^n-\tilde{T}^n=T(T^{n-1}-\tilde{T}^{n-1})+({T}-\tilde{T})\tilde{T}^{n-1}.
$$
A simple proof by induction now shows that
\begin{align*}
\|T^n-\tilde{T}^n\|&\leq \|{T}-\tilde{T}\|\sum_{j=0}^{n-1}\|T\|^j\|\tilde{T}\|^{n-1-j}\\
&\leq \Delta_A\sum_{j=0}^{n-1}\|\mathcal{K}\|^{j}(\|\mathcal{K}\|+\Delta_A)^{n-1-j}\\
&= \Delta_A\frac{\|\mathcal{K}\|^n-\Delta_A^n}{\|\mathcal{K}\|-\Delta_A}.
\end{align*}
We wish to bound the quantity
\begin{align*}
&\|\Psi K^n\pmb{g}-\Psi\tilde{K}^n\pmb{g}\|=\|{T}^n{G}^{1/2}\pmb{g}-I_G\tilde{T}^n\tilde{G}^{1/2}\pmb{g}\|\\
&\quad\quad\quad\leq \|{T}^n-\tilde{T}^n\|\|g\|+\|\tilde{T}^n{G}^{1/2}\pmb{g}-I_G\tilde{T}^n\tilde{G}^{1/2}\pmb{g}\|.
\end{align*}
We can express the final term on the right-hand side as
\begin{align*}
\tilde{T}^n{G}^{1/2}\pmb{g}-I_G{\tilde{T}}^n\tilde{G}^{1/2}\pmb{g}&=I_G\tilde{T}^n(I-I_G^{-1}){G}^{1/2}\pmb{g}\\
&\quad\quad+(I-I_G){\tilde{T}}^n{G}^{1/2}\pmb{g}.
\end{align*}
It follows that
\begin{align*}
\|\tilde{T}^n{G}^{1/2}\pmb{g}-I_G\tilde{T}^n\tilde{G}^{1/2}\pmb{g}\|&\leq \|\tilde{T}^n\|\|{G}^{1/2}\pmb{g}\|\Delta_G\\
&\leq \left(\|\mathcal{K}\|^n+\|{T}^n-\tilde{T}^n\|\right)\Delta_G\|g\|
\end{align*}
and hence that
\begin{align*}
&\|\Psi K^n\pmb{g}-\Psi\tilde{K}^n\pmb{g}\|\leq \left[\|{T}^n-\tilde{T}^n\|(\Delta_G+1)+\|\mathcal{K}\|^n\Delta_G\right]\|g\|\\
&\quad\quad\quad\quad\leq \left[\frac{\|\mathcal{K}\|^n-\Delta_A^n}{\|\mathcal{K}\|-\Delta_A}\Delta_A(\Delta_G+1)+\|\mathcal{K}\|^n\Delta_G\right]\|g\|.
\end{align*}
The theorem now follows from the triangle inequality.\qed
\end{proof}

This theorem explicitly tells us how much to trust the prediction using the computed Koopman matrix, compared with the true Koopman operator. The quantities $\Delta_G$ and $\Delta_A$ represent errors due to estimation or quadrature. They are both expected to be small. The quantity $\delta_n(g)$ is an intrinsic invariant subspace error that depends on the dictionary and observable $g$. To approximate $\delta_n(g)$, note that
$$
\mathcal{K}^{n}[g]{-}\Psi K^n\pmb{g}=\sum_{j=1}^n\mathcal{K}^{n-j}[\mathcal{K}[\Psi K^{j-1}\pmb{g}]{-}\Psi K^j \pmb{g}]
$$
and hence
\begin{equation}
\label{delta_comp1}
\|\mathcal{K}^{n}[g]{-}\Psi K^n\pmb{g}\|{\leq}\!\sum_{j=1}^n\!\|\!\mathcal{K}\!\|^{n{-}j}\|\!\mathcal{K}\![\Psi K^{j{-}1}\pmb{g}]{-}\Psi K^j \pmb{g}\!\|.
\end{equation}
To bound the term on the right-hand side, we can use the matrix $H$ in \eqref{eqn:new_H_matrix} and the fact that
\begin{equation}
\label{delta_comp2}
\|\mathcal{K}\Psi \pmb{v}{-}\Psi Kv\|=\sqrt{\pmb{v}^*H\pmb{v}{-}2\mathrm{Re}(\pmb{v}^*K^*A\pmb{v}){+}\pmb{v}^*K^*GK\pmb{v}}
\end{equation}
for any $\pmb{v}\in\mathbb{C}^N$.

\subsection{Estimation error for computation of $A$, $G$, and $L$}\label{sec:matrix}

To effectively estimate $\mathcal{K}_{(1)}g$ and ${\rm Sp}_\epsilon^{\rm var}(\mathcal{K}_{(1)})$ in practical applications, it is imperative to have reliable approximations of $A$, $G$, and $L$. We provide a justification for our ability to construct such approximations from trajectory data with high probability, employing concentration bounds. The subsequent result delineates the requisite number of samples and basis functions needed to achieve a desired level of accuracy with high probability. To ensure this level of accuracy, several reasonable assumptions about the stochastic dynamical system are necessary.

\begin{assumption}\label{ass:error}
We suppose that $\pmb{x}^{(m)}$ in the snapshot data are sampled at random according to $\omega$, independent of $\tau$, and for simplicity, assume that $\omega$ is a probability measure.\footnote{Similar types of bounds to Theorem \ref{thm:prob_concentration} can be derived for ergodic sampling and high-order quadrature sampling.} We assume that $\tau:\Omega_s\rightarrow\mathcal{H}$ for some Hilbert space $\mathcal{H}$ and let $\kappa=(\pmb{x},\tau)$. In this section, $\mathbb{E}$ and $\mathbb{P}$ are with respect to the joint distribution of $\kappa$. We assume that
\begin{itemize}
\item The random variable $\kappa$ is sub-Gaussian, meaning that there exists some $a>0$ such that
$$
\mathbb{E}\left[e^{\|\kappa-\mathbb{E}(\kappa)\|^2/a^2}\right]<\infty.
$$
This allows us to define the following finite quantity:
$$
\Upsilon=\inf\left\{s>0:e^{\frac{\mathbb{E}[\|\kappa-\mathbb{E}(\kappa)\|^2]}{s^2}}\mathbb{E}\left[e^{\frac{1}{s^2}\|\kappa-\mathbb{E}(\kappa)\|^2}\right]\leq 2\right\}.
$$
\item The dictionary functions are uniformly bounded and satisfy the following Lipschitz condition:
$$
|\psi_k(\pmb{x})-\psi_k(\pmb{x}')|\leq c_k\|\pmb{x}-\pmb{x}'\|.
$$
\item The function $F$ is Lipschitz with
$$
\|F(\kappa)-F(\kappa')\|\leq c\|\kappa-\kappa'\|.
$$
\end{itemize}
\end{assumption}

With these assumptions, we can show that our approximations of $A$, $G$, and $L$ are good with high probability.  

\begin{theorem}[Concentration bound on estimation errors]
\label{thm:prob_concentration}
Under Assumption~\ref{ass:error} we have, for any $t>0$,
\begin{align*}
&\mathbb{P}\left(\|\tilde{A}{-} A\|_{\mathrm{Fr}}< t\right){\geq}1{-}\exp\left(\!2\log(2N){-}\frac{Mt^2}{24\Upsilon^2(c^2{+}1)\alpha^2\beta^2}\right)\\
&\mathbb{P}\left(\|\tilde{G}{-} G\|_{\mathrm{Fr}}< t\right){\geq}1{-}\exp\left(\!2\log(2N){-}\frac{Mt^2}{48\Upsilon^2\alpha^2\beta^2}\right)\\
&\mathbb{P}\left(\|\tilde{L}{-} L\|_{\mathrm{Fr}}< t\right){\geq}1{-}\exp\left(\!2\log(2N){-}\frac{Mt^2}{48\Upsilon^2c^2\alpha^2\beta^2}\right),
\end{align*}
where $\|\cdot\|_{\mathrm{Fr}}$ denotes the Frobenius norm, and $\alpha$ and $\beta$ are given by 
$$
\alpha=\sqrt{\sum_{k=1}^Nc_k^2},\quad \beta =\sqrt{\sum_{k=1}^N\|\psi_k\|_{L^\infty}^2}.
$$
\end{theorem}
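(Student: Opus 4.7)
The plan is to decompose each matrix difference as an empirical average of i.i.d.\ centered random matrices and then apply a sub-Gaussian Hoeffding-type concentration in Frobenius norm. Since $w_m=1/M$ and the $\pmb{x}^{(m)}$ are i.i.d.\ $\omega$-samples independent of the $\tau_m$, writing $\kappa_m=(\pmb{x}^{(m)},\tau_m)$ one has
\begin{align*}
\tilde{G}-G&=\tfrac{1}{M}\sum_{m=1}^{M}\bigl[\Psi(\pmb{x}^{(m)})^{*}\Psi(\pmb{x}^{(m)})-G\bigr],\\
\tilde{A}-A&=\tfrac{1}{M}\sum_{m=1}^{M}\bigl[\Psi(\pmb{x}^{(m)})^{*}\Psi(F(\kappa_m))-A\bigr],\\
\tilde{L}-L&=\tfrac{1}{M}\sum_{m=1}^{M}\bigl[\Psi(F(\kappa_m))^{*}\Psi(F(\kappa_m))-L\bigr],
\end{align*}
so every summand is a mean-zero function of the single sample $\kappa_m$.

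The first substantive step is a Lipschitz computation. From $\|\Psi(\pmb{x})\|_{\ell^{2}}\leq\beta$ and $\|\Psi(\pmb{x})-\Psi(\pmb{x}')\|_{\ell^{2}}\leq\alpha\|\pmb{x}-\pmb{x}'\|$, the product rule gives that $\pmb{x}\mapsto\Psi(\pmb{x})^{*}\Psi(\pmb{x})$ is $2\alpha\beta$-Lipschitz in Frobenius norm; composing with the $c$-Lipschitz map $\kappa\mapsto F(\kappa)$ yields Lipschitz constant $2c\alpha\beta$ for the $\tilde{L}$-summand. For the $\tilde{A}$-summand, a triangle-inequality split produces the bound $\alpha\beta\|\pmb{x}-\pmb{x}'\|+c\alpha\beta\|\kappa-\kappa'\|$, and Cauchy--Schwarz then yields the Lipschitz constant $\sqrt{2(1+c^{2})}\,\alpha\beta$. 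Squaring these Lipschitz constants reproduces the numerical factors $4$, $2(1+c^{2})$, and $4c^{2}$ that multiply $\alpha^{2}\beta^{2}$ in the three stated denominators.

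The second step is to convert the Orlicz condition defining $\Upsilon$ into a sub-Gaussian tail bound for Lipschitz functionals of $\kappa$. For any $L$-Lipschitz scalar $f$ one has $|f(\kappa)-\mathbb{E}f(\kappa)|\leq L\|\kappa-\mathbb{E}\kappa\|+L\,\mathbb{E}\|\kappa-\mathbb{E}\kappa\|$; combined with $(a+b)^{2}\leq 2a^{2}+2b^{2}$ and Jensen, the defining inequality for $\Upsilon$ then yields $\mathbb{E}\bigl[\exp\bigl((f(\kappa)-\mathbb{E}f(\kappa))^{2}/(2L^{2}\Upsilon^{2})\bigr)\bigr]\leq 2$. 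Applied to each of the real and imaginary parts of the entries of the three summands (which are Lipschitz in $\kappa_m$ with constants controlled by the Frobenius-Lipschitz constants above) and combined with the scalar sub-Gaussian Hoeffding inequality for the $M$-term average, one obtains an entry-wise bound of the form $2\exp(-cMt^{2}/(L^{2}\Upsilon^{2}))$. A union bound over the $(2N)^{2}$ real--imaginary entries contributes the prefactor $\exp(2\log(2N))$, and the entry-to-Frobenius conversion contributes the remaining constants; the universal factor $12\Upsilon^{2}$ in the denominators then follows from chaining the $\tfrac{1}{2}$ in the sub-Gaussian exponent with the factor of $2$ from the Jensen step and the entry-wise-to-Frobenius translation.

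The main obstacle will be sharply tracking constants. The $12\Upsilon^{2}$ multiplier and the prefactor $\exp(2\log(2N))$ only emerge if the Lipschitz product/composition rules, the Orlicz-norm inequality tied to the precise form of $\Upsilon$, and the entry-to-Frobenius passage are each executed at the sharpness of the statement; any looser use of Jensen or a less tight sub-Gaussian Hoeffding would inflate the constants beyond the stated $24$, $48$, and $48$. A secondary subtlety is that the Lipschitz bounds must be applied to the summand \emph{before} centering, since centering shifts but does not change the Lipschitz constant, whereas centering is essential for the sub-Gaussian moment-generating estimate to kick in.
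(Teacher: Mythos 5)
Your overall route coincides with the paper's: for each of the three matrices, bound the entrywise estimation error via a Lipschitz estimate in $\kappa$, convert the Orlicz condition defining $\Upsilon$ into a sub-Gaussian moment-generating bound for each centered entry, apply a Chernoff-type tail bound to the $M$-term average, then take a union bound over the $2N^2$ real/imaginary entries and pass to the Frobenius norm by allocating the accuracy threshold $t$ proportionally across entries. Your Frobenius--Lipschitz constants $2\alpha\beta$, $\sqrt{2(1+c^2)}\,\alpha\beta$, and $2c\alpha\beta$ match exactly the paper's entrywise estimate $c_{j,k}=c_kc\|\psi_j\|_{L^\infty}+c_j\|\psi_k\|_{L^\infty}$ after the AM--GM step $\sum_{j,k}c_{j,k}^2\leq 2(c^2+1)\alpha^2\beta^2$ (and its analogues for $G$, $L$).

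The one step that would fail to reproduce the stated constants is your conversion from the Lipschitz bound to the sub-exponential Orlicz bound. You propose $|f(\kappa)-\mathbb{E}f|\leq L\|\kappa-\mathbb{E}\kappa\|+L\,\mathbb{E}\|\kappa-\mathbb{E}\kappa\|$ followed by $(a+b)^2\leq 2a^2+2b^2$, which yields $\mathbb{E}\bigl[\exp\bigl(|f(\kappa)-\mathbb{E}f|^2/(2L^2\Upsilon^2)\bigr)\bigr]\leq 2$ with an unwanted factor of $2$. The paper avoids this factor entirely via the Pythagorean variance identity: since the cross term vanishes when you condition on $\kappa'$,
$$
\bigl(\mathbb{E}_\kappa\|\kappa-\kappa'\|\bigr)^2\leq\mathbb{E}_\kappa\|\kappa-\kappa'\|^2=\mathbb{E}\|\kappa-\mathbb{E}\kappa\|^2+\|\kappa'-\mathbb{E}\kappa\|^2,
$$
so $|f(\kappa')-\mathbb{E}f|^2\leq L^2\bigl(\|\kappa'-\mathbb{E}\kappa\|^2+\mathbb{E}\|\kappa-\mathbb{E}\kappa\|^2\bigr)$ directly, hence $\mathbb{E}\bigl[\exp\bigl(|f-\mathbb{E}f|^2/(L^2\Upsilon^2)\bigr)\bigr]\leq 2$ without the extra $2$. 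That factor is what separates the paper's stated denominators $24$, $48$, $48c^2$ from the $48$, $96$, $96c^2$ you would otherwise obtain. The remaining factor of $12\Upsilon^2$ then arises from the explicit MGF bound $\mathbb{E}[\exp(\lambda Y)]\leq\exp(3\lambda^2/(2b))$ with $b=1/(\Upsilon^2c_{j,k}^2)$ (giving a $6$ from the Chernoff minimization) combined with the factor of $2$ from splitting real and imaginary parts in the entry-to-Frobenius allocation. You correctly flagged the Jensen step as the likely source of slack; the Pythagorean decomposition is the specific sharpening needed.
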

\begin{proof}
We first argue for $\|\tilde{A}-A\|_{\mathrm{Fr}}$. Fix $j,k\in\{1,\ldots,N\}$ and define the random variable
$$
X=\psi_k(F(\pmb{x},\tau))\overline{\psi_j(\pmb{x})}.
$$
Then
$$
\left|X(\kappa)-X(\kappa')\right|\leq (c_kc\|\psi_j\|_{L^\infty}+c_j\|\psi_k\|_{L^\infty})\|\kappa-\kappa'\|.
$$
Let $c_{j,k}=c_kc\|\psi_j\|_{L^\infty}+c_j\|\psi_k\|_{L^\infty}$. The above Lipschitz bound for $X$ implies that
\begin{align*}
\left|\mathbb{E}[X]-X(\kappa')\right|&\leq c_{j,k}\int_{\Omega\times\Omega_s}\|\kappa-\kappa'\| \,\mathrm{d} \mathbb{P}(\kappa)\\
&\leq c_{j,k}\sqrt{\|\kappa-\mathbb{E}(\kappa)\|^2+\mathbb{E}(\|\kappa-\mathbb{E}(\kappa)\|^2)},
\end{align*}
where we have used H\"older's inequality to derive the last line. It follows that
$$
\mathbb{E}\left[\exp\left(\frac{\left|\mathbb{E}[X]-X\right|^2}{\Upsilon^2c_{j,k}^2}\right)\right]\leq 2.
$$
Let $Y=\mathrm{Re}\left(\mathbb{E}\left[X\right]-X\right)$ and $\lambda\geq 0$. Since $\mathbb{E}[Y]=0$, we have
$$
\mathbb{E}\left[\exp\left(\lambda Y\right)\right]=1+\sum_{l=2}^\infty\frac{\lambda^l\mathbb{E}[Y^l]}{l!}\leq 1+\frac{\lambda^2}{2}\mathbb{E}\left[Y^2\exp(\lambda|Y|)\right].
$$
For any $b>0$, we have $\lambda|Y|\leq\lambda^2/(2b)+b|Y|^2/2$. We also have $bY^2\leq \exp(bY^2/2)$. It follows that
$$
\mathbb{E}\left[\exp\left(\lambda Y\right)\right]\leq 1+\frac{\lambda^2}{2b}e^{\lambda^2/(2b)}\mathbb{E}\left[\exp(bY^2)\right].
$$
We select $b=1/(\Upsilon^2c_{j,k}^2)$ and use the fact that $\mathbb{E}\left[\exp(bY^2)\right]\leq \mathbb{E}\left[\exp(b|\mathbb{E}[X]-X|^2)\right]\leq 2$ to obtain
$$
\mathbb{E}\left[\exp\left(\lambda Y\right)\right]
\leq 1+\frac{\lambda^2}{b}e^{\frac{\lambda^2}{2b}}
\leq\left(1+\frac{\lambda^2}{b}\right)e^{\frac{\lambda^2}{2b}}
\leq e^{\frac{3\lambda^2}{2b}}.
$$
Now let $\{Y^{(m)}\}_{m=1}^{M}$ independent copies of $Y$, then
\begin{align*}
&\mathbb{P}\left(\frac{1}{M}\sum_{m=1}^{M}Y^{(m)}\geq t\right)=\mathbb{P}\left(\exp(\lambda\sum_{m=1}^{M}Y^{(m)})\geq \exp(\lambda Mt) \right)\\
&\leq e^{-\lambda Mt}\mathbb{E}\left[\exp\left(\lambda\sum_{m=1}^{M}Y^{(m)}\right)\right]= e^{-\lambda Mt}\prod_{m=1}^{M}\mathbb{E}\left[\exp\left(\lambda Y\right)\right]\\
&\leq \exp\left(3M\lambda^2/(2b)-\lambda M t\right),
\end{align*}
where we use Markov's inequality in the first inequality. Minimizing over $\lambda$, we obtain
$$
\mathbb{P}\left(\frac{1}{M}\sum_{m=1}^{M}Y^{(m)}\geq t\right)\leq \exp\left(-Mbt^2/6\right).
$$
We can argue in the same manner for $-Y$ and deduce that
$$
\mathbb{P}\left(\frac{1}{M}\left|\sum_{m=1}^{M}Y^{(m)}\right|\geq t\right)\leq 2\exp\left(-Mbt^2/6\right).
$$
Similarly, we can argue for the imaginary part of $\mathbb{E}[X]-X$.

We now allow $j,k$ to vary and let $X_{j,k}=\psi_k(F(\pmb{x},\tau))\overline{\psi_j(\pmb{x})}$. For $t>0$, consider the events
\begin{align*}
S_{j,k,1}&:\frac{1}{M}\left|\sum_{m=1}^{M}\mathrm{Re}\left(\mathbb{E}[X_{j,k}]-X_{j,k}(\kappa_m)\right)\right|< \frac{t\Upsilon c_{j,k}}{\sqrt{2\Upsilon^2 \sum_{l,p=1}^Nc_{l,p}^2}},\\
S_{j,k,2}&:\frac{1}{M}\left|\sum_{m=1}^{M}\mathrm{Im}\left(\mathbb{E}[X_{j,k}]-X_{j,k}(\kappa_m)\right)\right|< \frac{t\Upsilon c_{j,k}}{\sqrt{2\Upsilon^2 \sum_{l,p=1}^Nc_{l,p}^2}}.
\end{align*}
Then
\begin{align*}
\mathbb{P}(\cap_{j,k,i}S_{j,k,i})&\geq 1 - \sum_{j,k=1}^N (\mathbb{P}(S_{j,k,1}^c)+\mathbb{P}(S_{j,k,2}^c))\\
&\geq 1-4N^2\exp\left(-\frac{Mt^2}{12\Upsilon^2\sum_{l,p=1}^Nc_{l,p}^2}\right).
\end{align*}
Moreover, the AM-GM inequality implies that
$$
c_{l,p}^2\leq 2c^2c_k^2\|\psi_j\|_{L^\infty}^2+2c_j^2\|\psi_k\|_{L^\infty}^2
$$
and hence
$$
\sum_{l,p=1}^Nc_{l,p}^2\leq 2(c^2+1)\alpha^2\beta^2.
$$
It follows that
$$
\mathbb{P}(\cap_{j,k,i}S_{j,k,i})\geq 1-\exp\left(2\log(2N)-\frac{Mt^2}{24\Upsilon^2(c^2+1)\alpha^2\beta^2}\right).
$$
If $\cap_{j,k,i}S_{j,k,i}$, then $\|\tilde{A}-A\|_{\mathrm{Fr}}< t$. We can argue in the same manner, without the function $F$, to deduce that
$$
\mathbb{P}(\|\tilde{G}-G\|_{\mathrm{Fr}}< t)\geq 1-\exp\left(2\log(2N)-\frac{Mt^2}{48\Upsilon^2\alpha^2\beta^2}\right).
$$
Finally, for the matrix $L$ and its estimate $\tilde{L}$, we derive similar concentration bounds for $\psi_k(F(\pmb{x},\tau))\overline{\psi_j(F(\pmb{x},\tau))}$  to see that
$$
\mathbb{P}(\|\tilde{L}-L\|_{\mathrm{Fr}}< t)\geq 1-\exp\left(2\log(2N)-\frac{Mt^2}{48\Upsilon^2c^2\alpha^2\beta^2}\right).
$$
The statement of the theorem now follows.
\qed
\end{proof}

This theorem explicitly spells out the number of basis functions and samples required to approximate the three matrices appearing in Theorem \ref{thm:forecast_bound}. Roughly speaking, if we set
$$
\exp\left(\!2\log(2N)\!-\!{Mt^2}\right)\sim N^2\exp\left(-Mt^2\right)\leq\delta\,,
$$
then
$$
M\sim |\ln{\delta}-2\ln{N}|/{t^2}.
$$
For any fixed tolerance $t$, the confidence exponentially tightens up when $M$, the number of samples, increases. The idea is similar to other concentration inequality type bounds: if one samples from the same distribution many times, the sample mean becomes closer and closer to the true mean, and this bound gives the confidence interval for the tail bound. On the other hand, when $N$ increases, more entries in the matrices need to be approximated, so it brings a logarithmically negative effect. More samples are needed to balance out the increase of $N$.

\section{Examples}\label{sec:examples}

We now present three examples. The first two are based on numerically sampled trajectory data, while the final example utilizes collected experimental data.

\subsection{Arnold's circle map}
\label{sec:cat_map}

For our first example, we revisit the circle map discussed in Example \ref{example:circle_map}, setting 
$c=1/5$, $\rho$ as the uniform distribution on $[0,1]$, and defining
$$
f(\pmb{x})=\frac{1}{4\pi}\sin(2\pi \pmb{x}).
$$
Our dictionary consists of Fourier modes $\{\exp(ij\pmb{x}):j=-n,\ldots,n\}$ with $n=20$ (yielding $N=41$),  and we use batched trajectory data with $M_1=100$ equally spaced $\{\pmb{x}^{(j)}\}$, and $M_2=2\times 10^4$. Figure \ref{fig:circle_con} illustrates the convergence of the matrices $\tilde{A},\tilde{L}$, and $\tilde{H}$. We do not display the convergence of $\tilde{G}$ as its error was on the order of machine precision, a result of the exponential convergence achieved by the trapezoidal quadrature rule across different batches. Figure \ref{fig:circle_res} shows the residuals computed using Algorithm \ref{alg:stoch_resDMD2}. The quantity $\mathrm{res}^{\mathrm{var}}(\lambda,g)$ deviates from \eqref{circle_var_no_f} (the formula for $f=0$), particularly when $|\lambda|$ is small. As $n$ increases, the residuals $\mathrm{res}(\lambda,g)$ converge to zero, indicating more accurate computation of the spectral content of $\mathcal{K}_{(1)}$. However, the residuals $\mathrm{res}^{\mathrm{var}}(\lambda,g)$ converge to finite positive values, except for the trivial eigenvalue $1$, which satisfies $\lim_{M\rightarrow\infty}\mathrm{res}^{\mathrm{var}}(\lambda,g)=0$.

To underscore the significance of variance in our analysis, Figure \ref{fig:circle_cov} displays the absolute value of the matrix $\tilde{L}-\tilde{H}$, which approximates the covariance matrix defined in \eqref{cov_mat_def}. Notably, the covariance disappears for the constant function $\exp(ij\pmb{x})$ with $j=0$, and the matrix is diagonally dominated. Figure \ref{fig:circle_pseudospec} presents the results obtained from applying Algorithms \ref{alg:res_pseudospec1} and \ref{alg:res_pseudospec2}. These results align in areas where the variance is minimal (large $|\lambda|$). However, in regions where $|\lambda|$ is small, the variance
component in \eqref{eq:res_decomp} becomes significant. This observation leads us to infer that only about seven eigenpairs are of meaningful significance in a statistically coherent framework.

\begin{figure}
\centering
\includegraphics[width=0.4\textwidth]{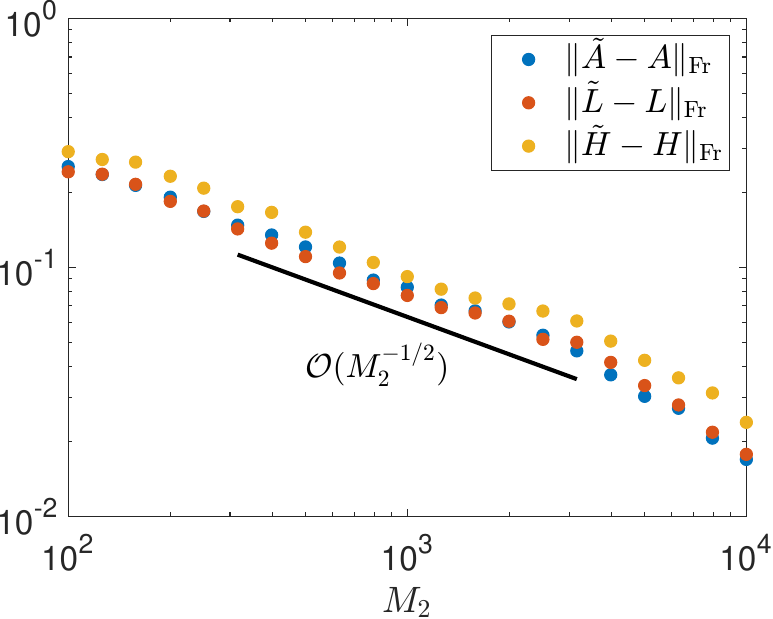}
\caption{Estimation error for the matrices $\tilde{A},\tilde{L}$ and $\tilde{H}$ for the circle map. The solid line shows the expected Monte--Carlo convergence rate.}
\label{fig:circle_con}
\end{figure}

\begin{figure}
\centering
\includegraphics[width=0.4\textwidth]{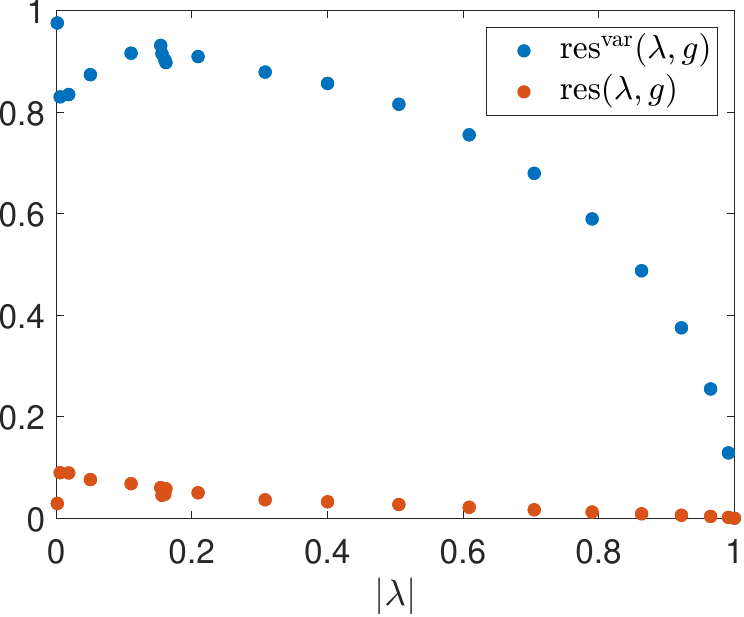}
\caption{Residuals for the circle map computed using Algorithm \ref{alg:stoch_resDMD2}.}
\label{fig:circle_res}
\end{figure}

\begin{figure}
\centering
\includegraphics[width=0.4\textwidth]{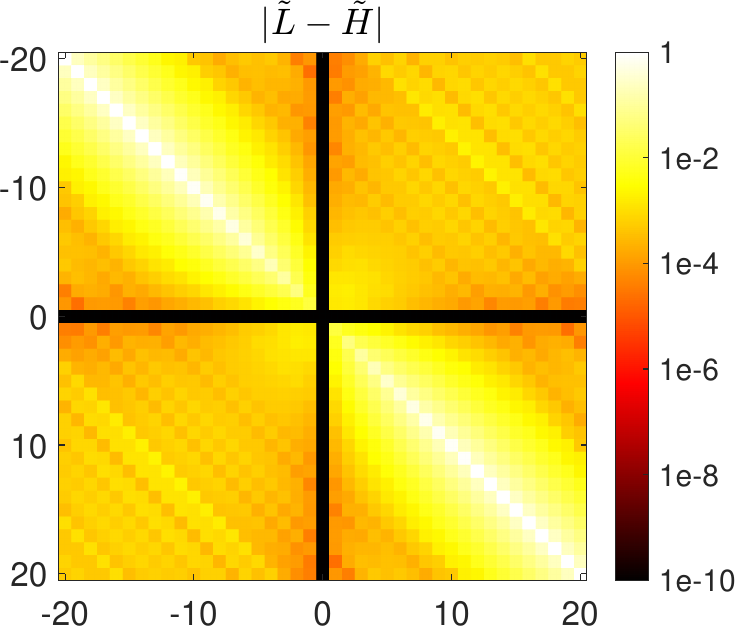}
\caption{Absolute values of the matrix $\tilde{L}-\tilde{H}$ for the circle map. This difference corresponds to the covariance matrix in \eqref{cov_mat_def}.}
\label{fig:circle_cov}
\end{figure}

\begin{figure*}
\centering
\includegraphics[width=0.48\textwidth]{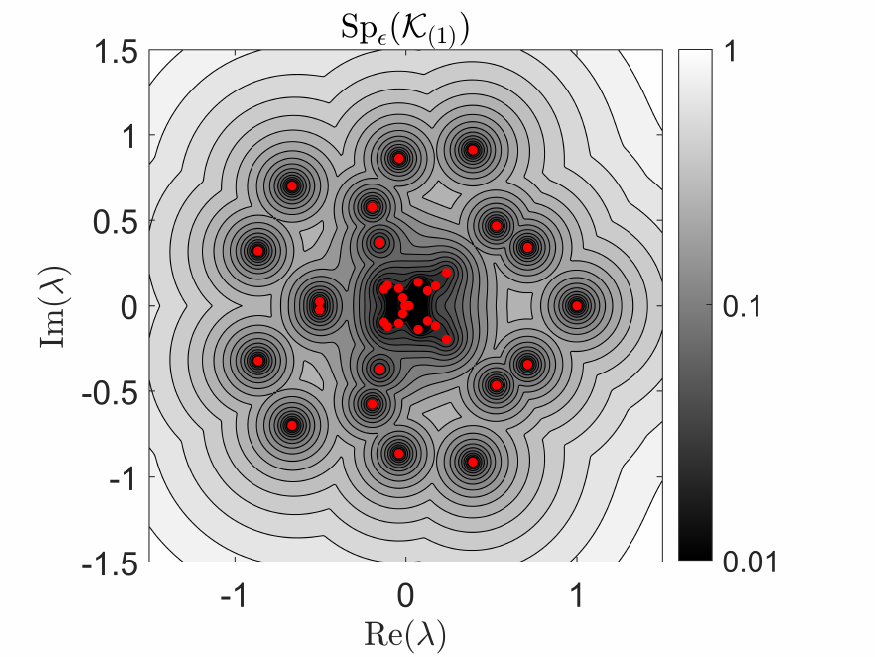}
\hfill
\includegraphics[width=0.48\textwidth]{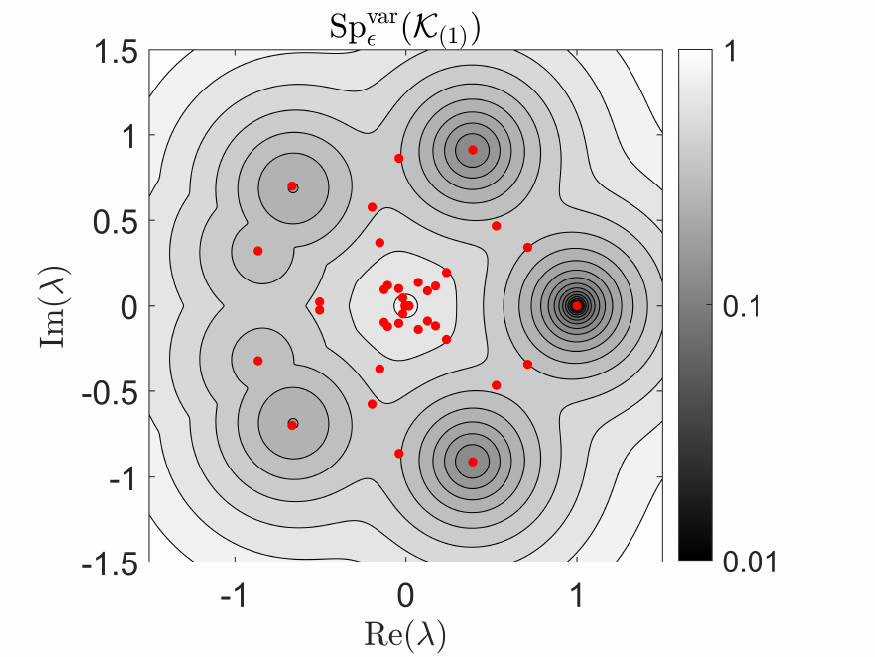}
\caption{Pseudospectra vs. variance pseudospectra. Left: Output of Algorithm \ref{alg:res_pseudospec1} for the circle map. Right: Output of Algorithm \ref{alg:res_pseudospec2} for the circle map. We have shown the minimized residuals over a contour plot of $\epsilon$ in both cases. The red dots correspond to the EDMD eigenvalues.}
\label{fig:circle_pseudospec}
\end{figure*}

\subsection{Stochastic Van der Pol oscillator}
\label{sec:stoch_vdp}

\begin{figure*}
\centering
\includegraphics[width=0.48\textwidth]{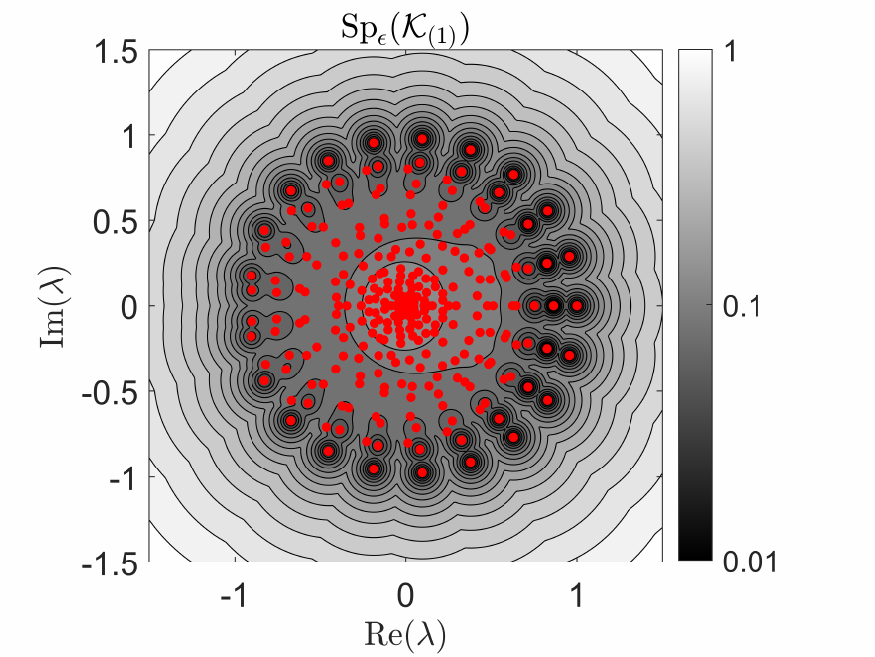}
\hfill
\includegraphics[width=0.48\textwidth]{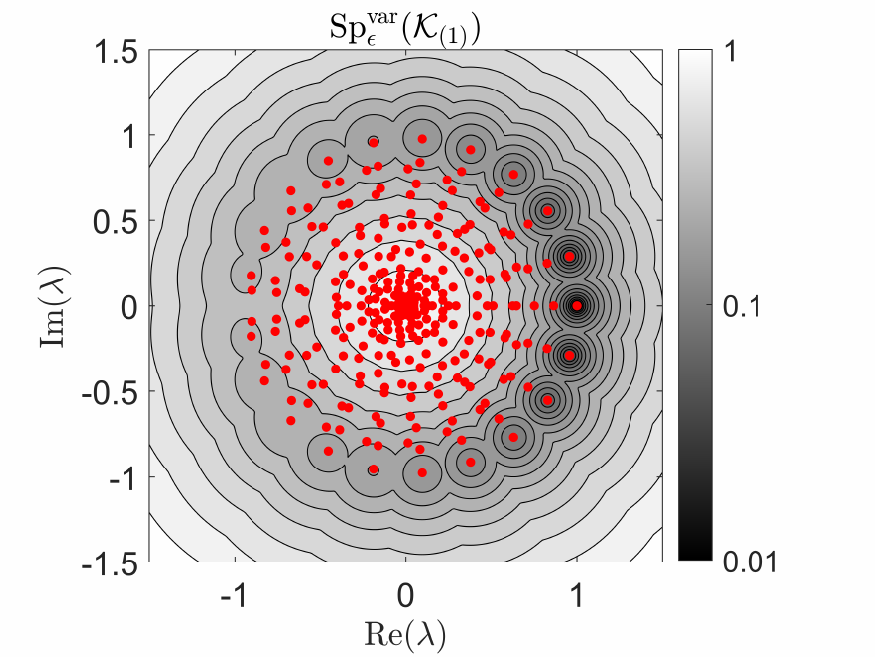}
\caption{Pseudospectra vs. variance pseudospectra. Left: Output of Algorithm \ref{alg:res_pseudospec1} for the stochastic Van der Pol oscillator. Right: Output of Algorithm \ref{alg:res_pseudospec2} for the stochastic Van der Pol oscillator. We have shown the minimized residuals over a contour plot of $\epsilon$ in both cases. The red dots correspond to the EDMD eigenvalues.}
\label{fig:SDE_pseudospec}
\end{figure*}

We now consider the stochastic differential equation
\begin{align*}
\mathrm{d} X_1 &= X_2 \mathrm{d}t\\
\mathrm{d}X_2 &= \left[\mu(1-X_1^2)X_2-X_1\right] \mathrm{d}t +\sqrt{2\delta}\mathrm{d} B_t,
\end{align*}
where $B_t$ denotes standard one-dimensional Brownian motion, $\delta>0$, and $\mu>0$.\footnote{The inclusion of Brownian motion only in the $\mathrm{d}X_2$ term is motivated by the physical interpretation of the random driving force. However, adding a similar term to the $\mathrm{d}X_1$ equation would only affect the Kolmogorov operator by altering the parameter $\delta$.} This equation represents a noisy version of the Van der Pol oscillator. In the absence of noise, the Van der Pol oscillator exhibits a limit cycle to which all initial conditions converge, except for the unstable fixed point at the origin. The introduction of noise transforms the system, resulting in a global attractor that forms a band around the deterministic system's limit cycle.

\begin{figure*}
\centering
\includegraphics[width=0.32\textwidth]{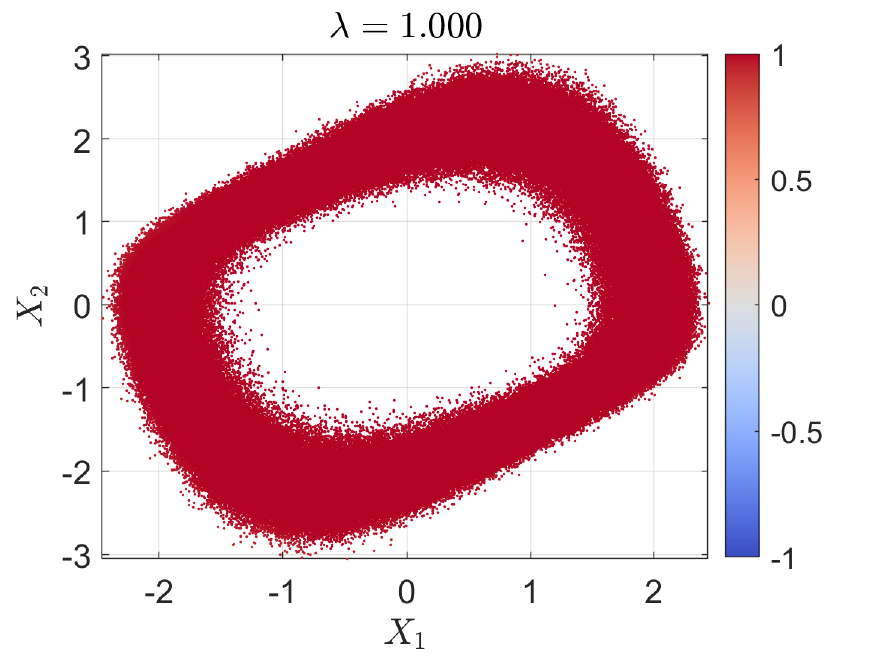}
\includegraphics[width=0.32\textwidth]{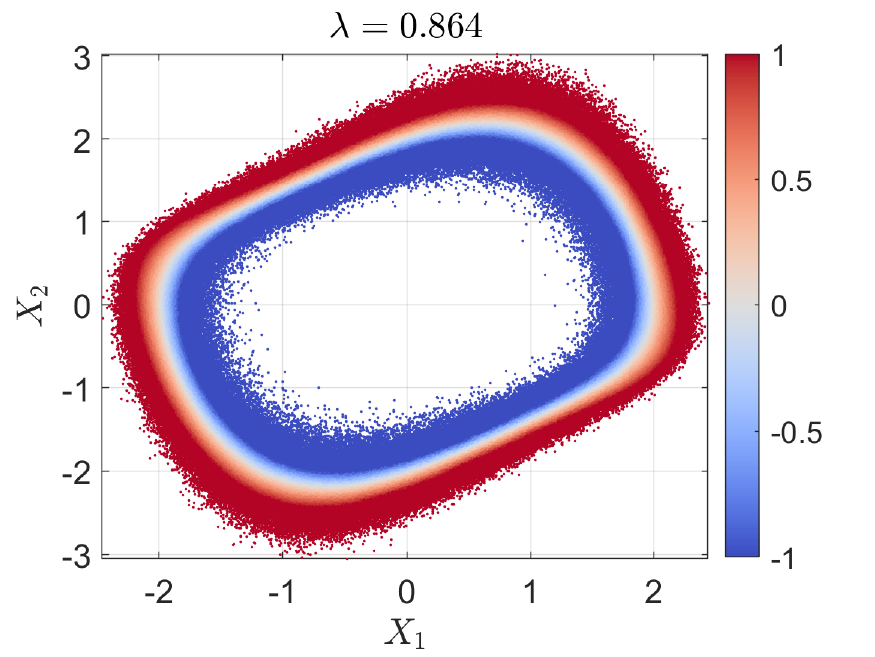}
\includegraphics[width=0.32\textwidth]{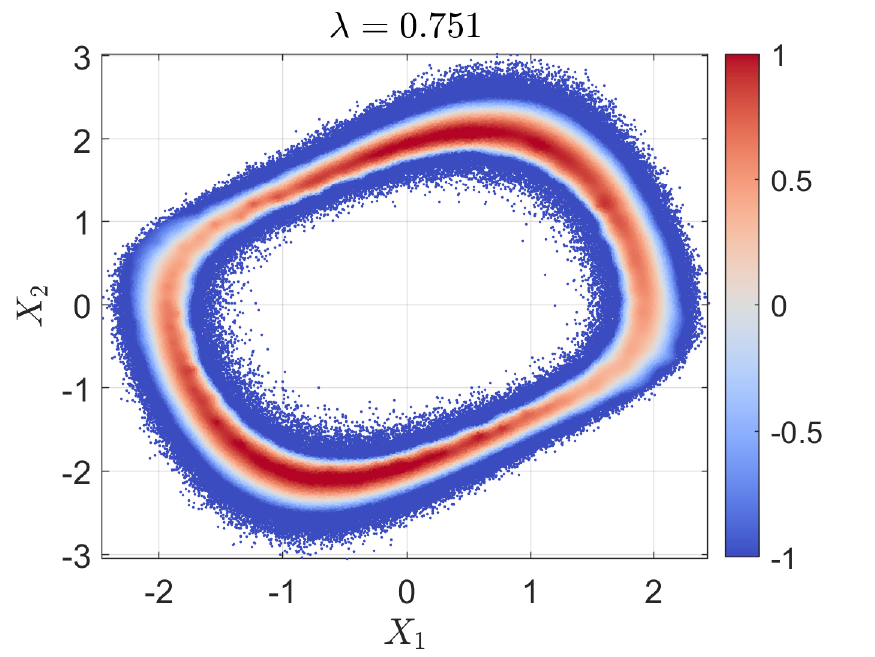}\\
\includegraphics[width=0.32\textwidth]{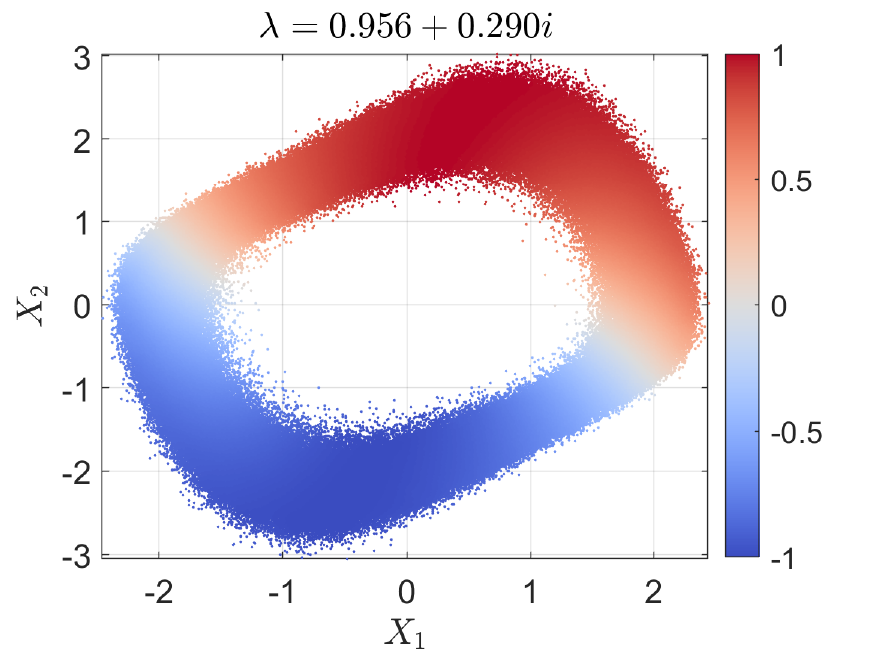}
\includegraphics[width=0.32\textwidth]{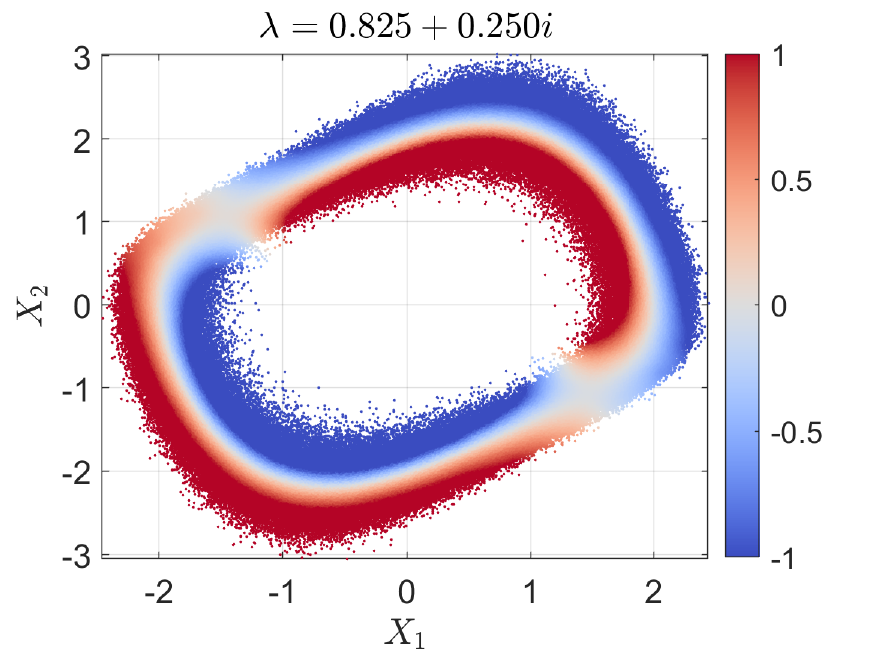}
\includegraphics[width=0.32\textwidth]{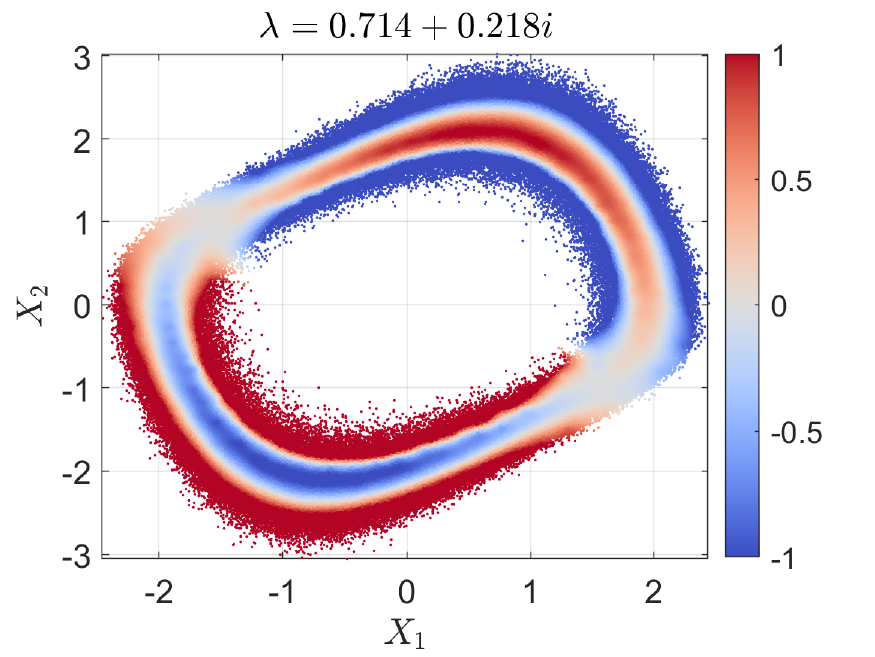}\\
\includegraphics[width=0.32\textwidth]{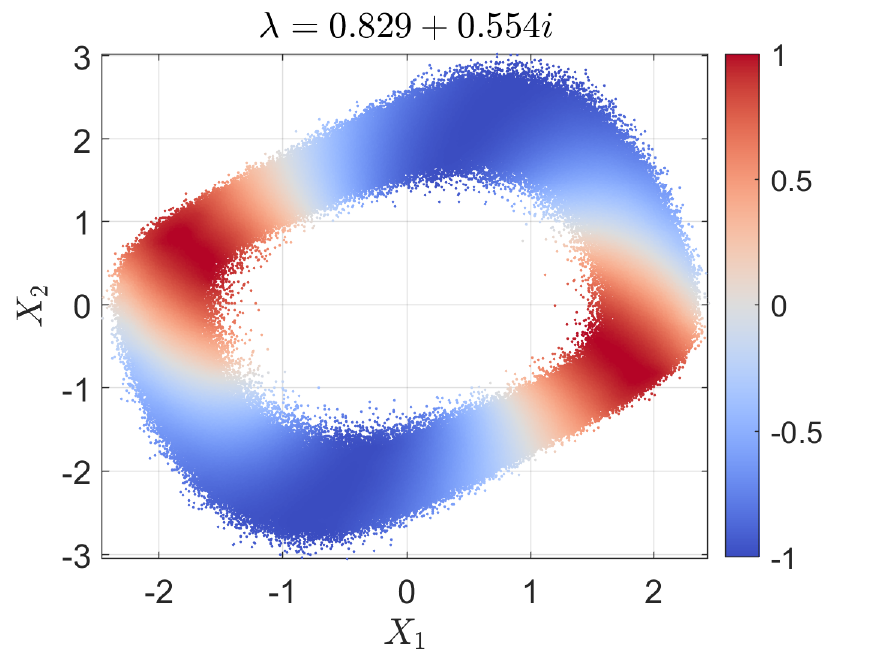}
\includegraphics[width=0.32\textwidth]{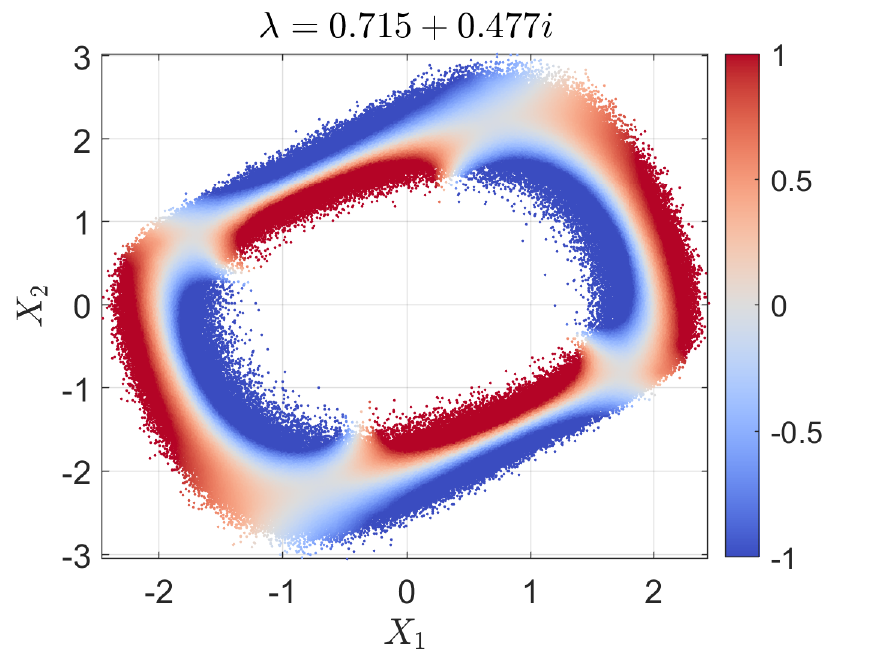}
\includegraphics[width=0.32\textwidth]{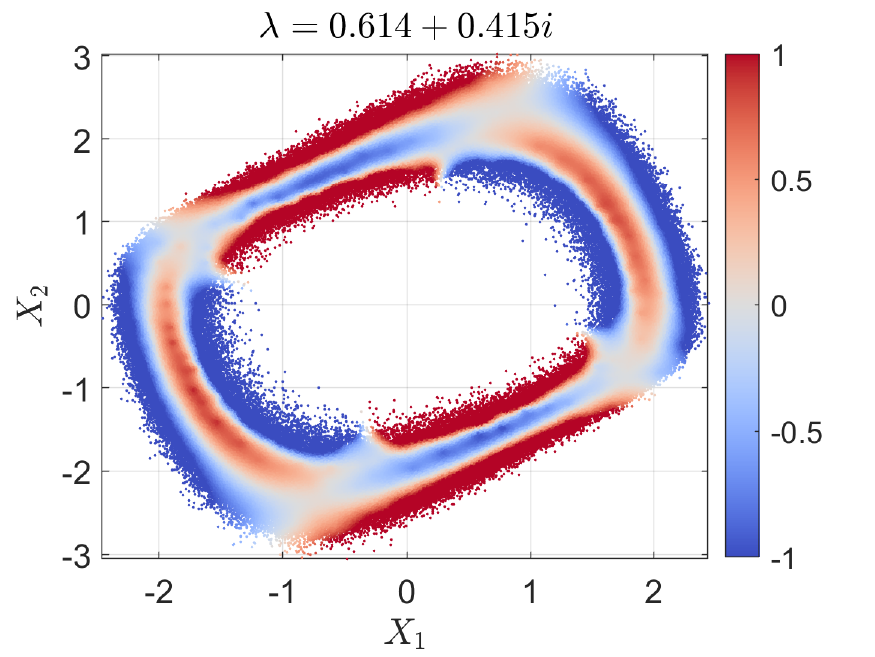}\\
\includegraphics[width=0.32\textwidth]{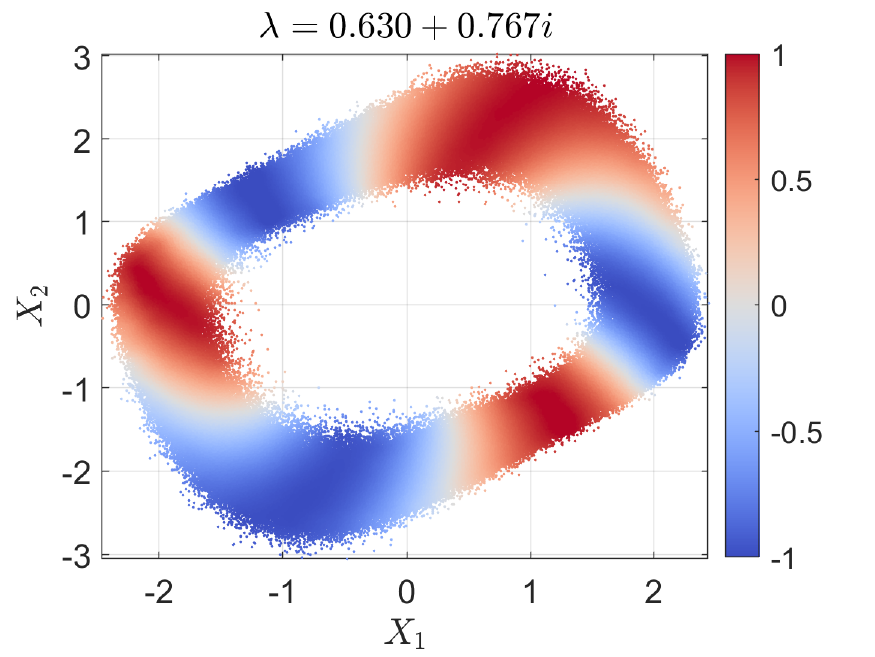}
\includegraphics[width=0.32\textwidth]{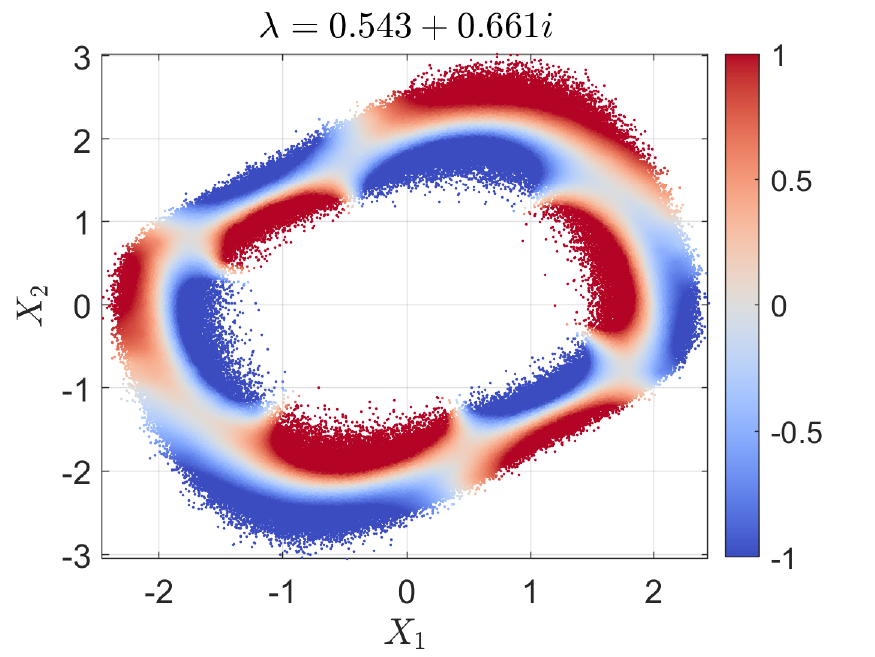}
\includegraphics[width=0.32\textwidth]{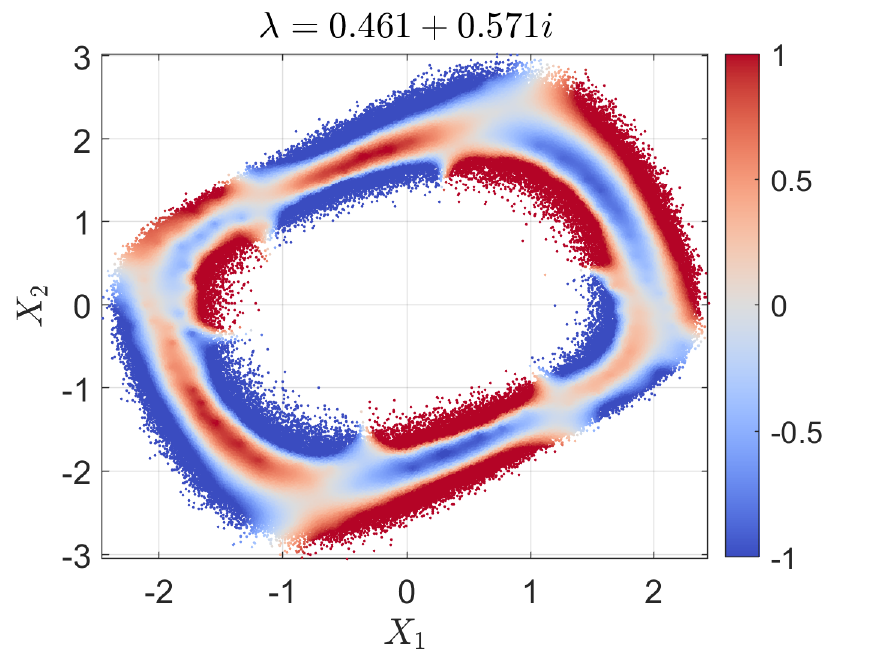}\\
\includegraphics[width=0.32\textwidth]{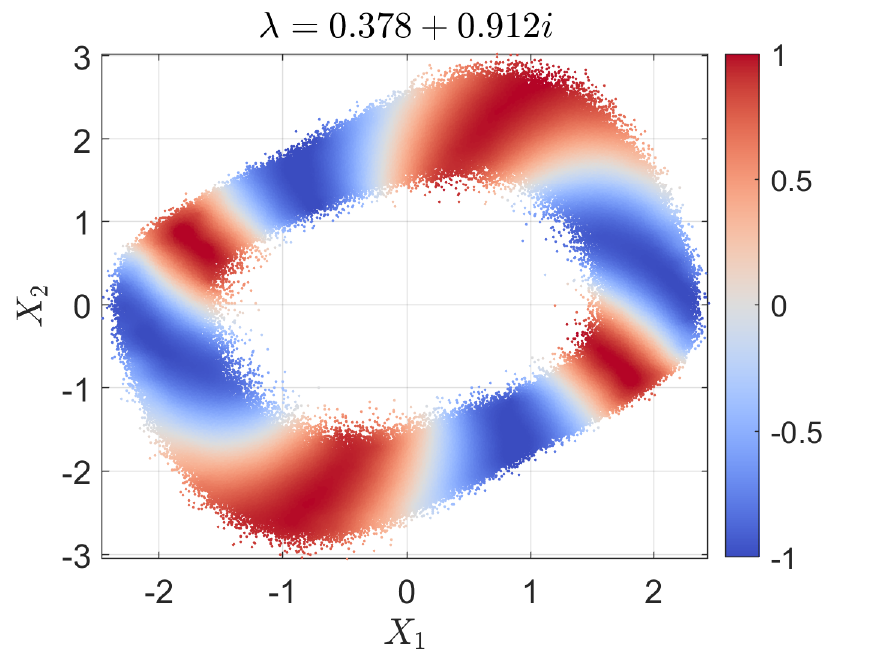}
\includegraphics[width=0.32\textwidth]{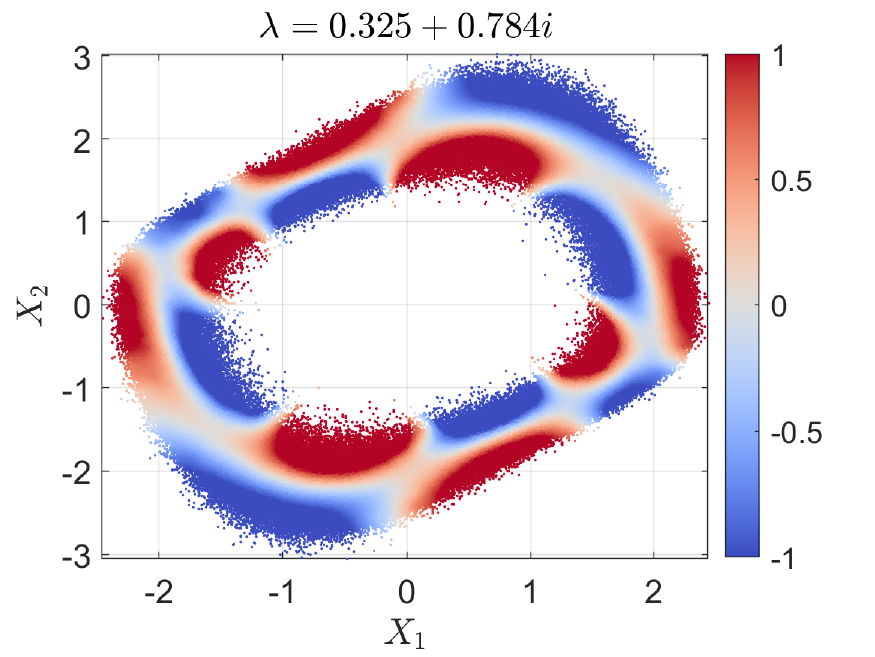}
\includegraphics[width=0.32\textwidth]{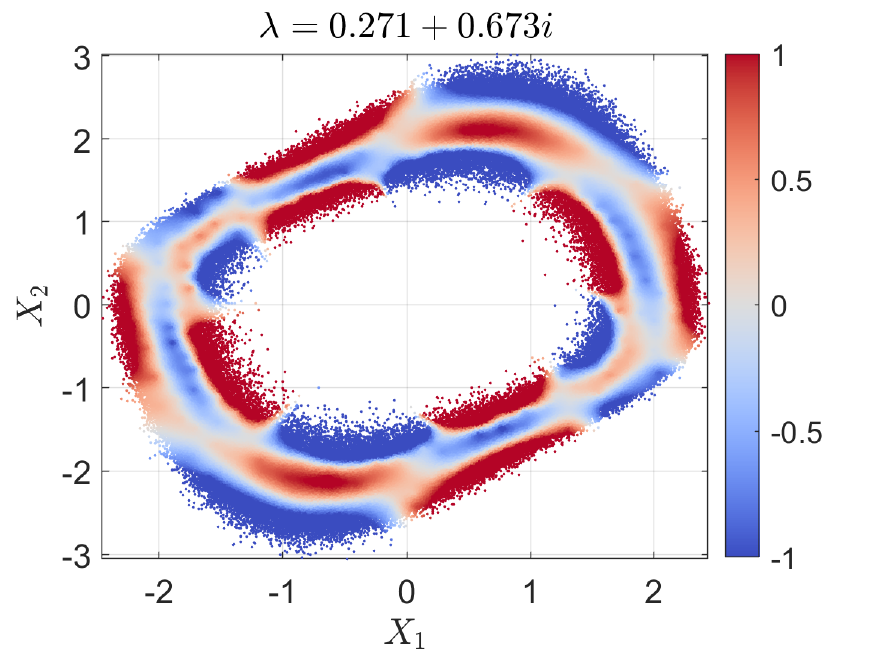}
\caption{Computed eigenfunctions (real part shown) of the stochastic Van der Pol oscillator. Due to conjugate symmetry, we have only shown eigenfunctions corresponding to eigenvalues with non-negative imaginary parts.}
\label{fig:SDE_modes}
\end{figure*}

The generator of the stochastic solutions, known as the backward Kolmogorov operator, is described in \cite[Section 9.3]{da2014stochastic}. It is a second-order elliptic type differential operator $\mathcal{L}$, defined by
\begin{align*}
[\mathcal{L}g](X_1,X_2) &=  \begin{pmatrix}
\!\pmb{x}_2\\ \mu(1-X_1^2)X_2-X_1
\end{pmatrix} \cdot\nabla g(X_1,X_2)\\
&\quad\quad\quad\quad\quad\quad +\delta \nabla^2g(X_1,X_2).
\end{align*}
For a discrete times step $\Delta_t$, the Koopman operator is given by $\exp(\Delta_t \mathcal{L})$. In the absence of noise ($\delta=0$), the Koopman operator has eigenvalues forming a lattice \cite[Theorem 13]{mezic2017koopman}:
$$
\left\{\hat{\lambda}_{m,k}=\exp([-m\mu + ik\omega_0]\Delta_t):k\in\mathbb{Z},m\in\mathbb{N}\cup\{0\}\right\},
$$
where $\omega_0\approx1-\mu^2/16$ is the base frequency of the limit cycle \cite{strogatz2018nonlinear}. When 
$\delta$ is moderate, the base frequency of the averaged limit cycle remains similar to that in the deterministic case \cite{leung1995stochastic}.

We simulate the dynamics using the Euler--Maruyama method \cite{rossler2010runge} with a time step of $3\times 10^{-3}$. Data are collected along a single trajectory of length $M_1=10^6$ with $M_2=2$, starting the sampling after the trajectory reaches the global attractor. We employ 318 Laplacian radial basis functions with centers on the attractor as our dictionary. The parameters are set to $\mu=0.5$, $\delta=0.02$, and $\Delta_t=0.3$.

\begin{figure*}
\centering
\includegraphics[width=0.4\textwidth]{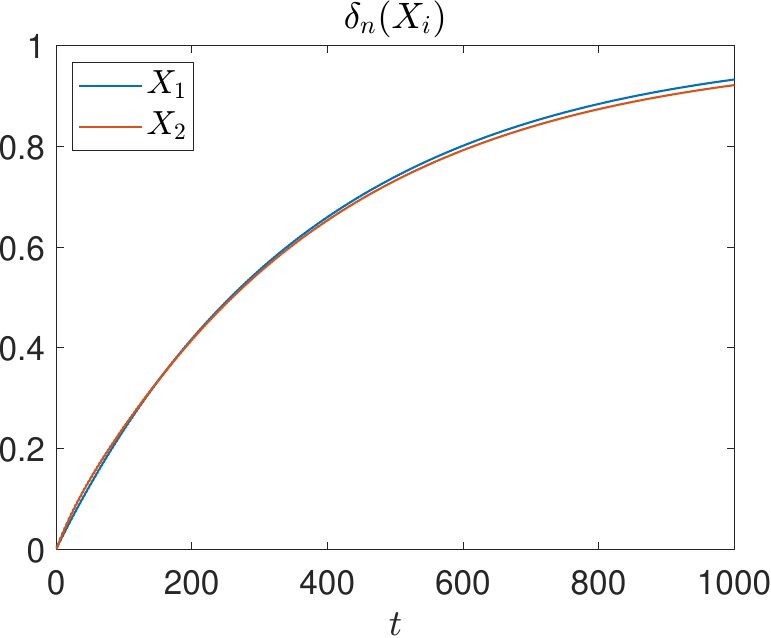}
\includegraphics[width=0.4\textwidth]{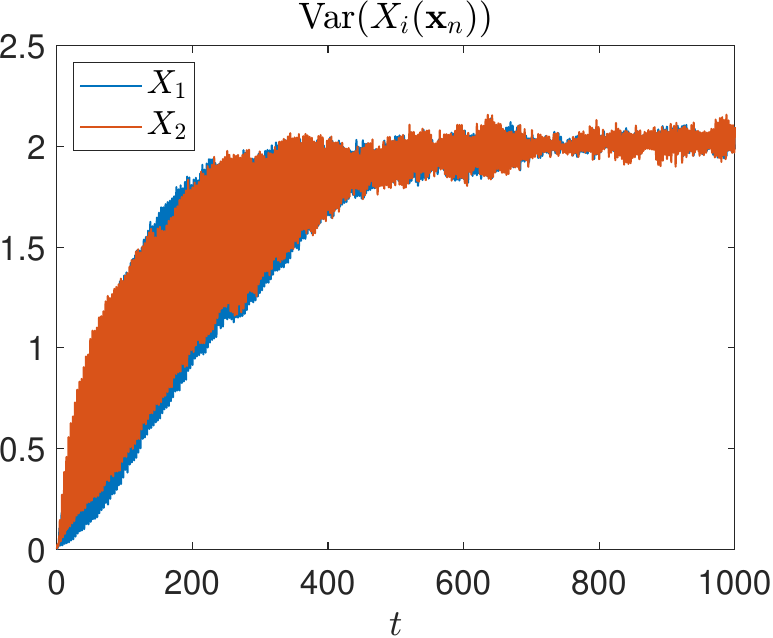}
\caption{Left: Subspace errors $\delta_n(X_1)$ and $\delta_n(X_2)$ for the stochastic Van der Pol oscillator, computed using \eqref{delta_comp1} and \eqref{delta_comp2}. Right: Variance of trajectory. We have rescaled the horizontal axis in both plots to correspond to time.}
\label{fig:SDE_var}
\end{figure*}

Figure \ref{fig:SDE_pseudospec} displays the results obtained using Algorithms \ref{alg:res_pseudospec1} and \ref{alg:res_pseudospec2}. Similar to observations from the circle map example, 
$\mathrm{Sp}_\epsilon(\mathcal{K}_{(1)})$ and $\mathrm{Sp}_\epsilon^\mathrm{var}(\mathcal{K}_{(1)})$ exhibit greater similarity near the unit circle. The lattice-like structure in the eigenvalues is also evident, with the EDMD-computed eigenvalues appearing as perturbations of the set $\{\hat{\lambda}_{m,k}\}$. Table \ref{tab:SDEtab} lists some of these eigenvalues alongside the residuals calculated using Algorithm \ref{alg:stoch_resDMD2}. We observe that as $|k|$ increases, $\mathrm{res}(\lambda,g)$ also increases, and similarly, $\mathrm{res}^{\mathrm{var}}(\lambda,g)$ increases with $m$. For any given eigenvalue, $\mathrm{res}(\lambda,g)$ decreases to zero with larger dictionaries. In contrast, $\mathrm{res}^{\mathrm{var}}(\lambda,g)$ approaches a finite non-zero value, except for the trivial eigenvalue, which has a constant eigenfunction exhibiting zero variance. Figure \ref{fig:SDE_modes} illustrates the corresponding eigenfunctions on the attractor, showcasing their beautiful modal structure.

\begin{table}
\caption{Computed eigenvalues of the stochastic Van der Pol oscillator, and the residuals computed using Algorithm \ref{alg:stoch_resDMD2}. We have ordered them according to perturbations of $\hat{\lambda}_{m,k}$. Due to conjugate symmetry, we have only shown eigenvalues with non-negative imaginary parts.}
\label{tab:SDEtab}       
\centering
\begin{tabular}{rcrcc}
\hline\noalign{\smallskip}
$\lambda\approx \hat{\lambda}_{m,k}$ & $m$ & $k$ &$\mathrm{res}^{\mathrm{var}}$ & $\mathrm{res}$ \\
\noalign{\smallskip}\hline\noalign{\smallskip}
$1.000 + 0.000i$  & $0$ & $0$  & $0.001$ & $0.001$  \\ 
$0.956 + 0.290i$  & $0$ & $1$  & $0.040$ & $0.001$  \\ 
$0.829 + 0.554i$  & $0$ & $2$  & $0.080$ & $0.002$ \\
$0.630 + 0.767i$  & $0$ & $3$  & $0.120$ & $0.005$ \\
$0.378 + 0.912i$  & $0$ & $4$  & $0.159$ & $0.008$  \\
$0.096 + 0.975i$  & $0$ & $5$  & $0.198$ & $0.012$ \\
$-0.190 + 0.953i$ & $0$ & $6$  & $0.237$ & $0.016$ \\ 
$-0.454 + 0.848i$ & $0$ & $7$  & $0.275$ & $0.022$   \\ 
$-0.672 + 0.671i$ & $0$ & $8$  & $0.313$ & $0.029$  \\ 
$0.864 + 0.000i$  & $1$ & $0$  & $0.504$ & $0.017$   \\ 
$0.825 + 0.250i$  & $1$ & $1$  & $0.506$ & $0.009$  \\
$0.715 + 0.477i$  & $1$ & $2$  & $0.511$ & $0.013$   \\
$0.543 + 0.661i$  & $1$ & $3$  & $0.518$ & $0.024$   \\ 
$0.325 + 0.784i$  & $1$ & $4$  & $0.528$ & $0.033$   \\ 
$0.083 + 0.838i$  & $1$ & $5$  & $0.541$ & $0.041$   \\ 
$-0.163 + 0.816i$ & $1$ & $6$  & $0.555$ & $0.051$  \\ 
$-0.388 + 0.724i$ & $1$ & $7$  & $0.571$ & $0.062$   \\ 
$-0.572 + 0.571i$ & $1$ & $8$  & $0.589$ & $0.074$   \\ 
$0.751 + 0.000i$  & $2$ & $0$  & $0.661$ & $0.057$   \\ 
$0.714 + 0.218i$  & $2$ & $1$  & $0.665$ & $0.066$   \\ 
$0.614 + 0.415i$  & $2$ & $2$  & $0.671$ & $0.075$   \\ 
$0.461 + 0.571i$  & $2$ & $3$  & $0.679$ & $0.084$  \\ 
$0.271 + 0.673i$  & $2$ & $4$  & $0.689$ & $0.094$   \\ 
$0.061 + 0.712i$  & $2$ & $5$  & $0.700$ & $0.104$   \\ 
$-0.149 + 0.685i$ & $2$ & $6$  & $0.713$ & $0.117$  \\ 
$-0.336 + 0.597i$ & $2$ & $7$  & $0.729$ & $0.131$  \\
$-0.550 + 0.463i$ & $2$ & $8$  & $0.696$ & $0.144$ \\ 
\noalign{\smallskip}\hline
\end{tabular}
\end{table}

\begin{figure*}
\centering
\includegraphics[width=0.49\textwidth]{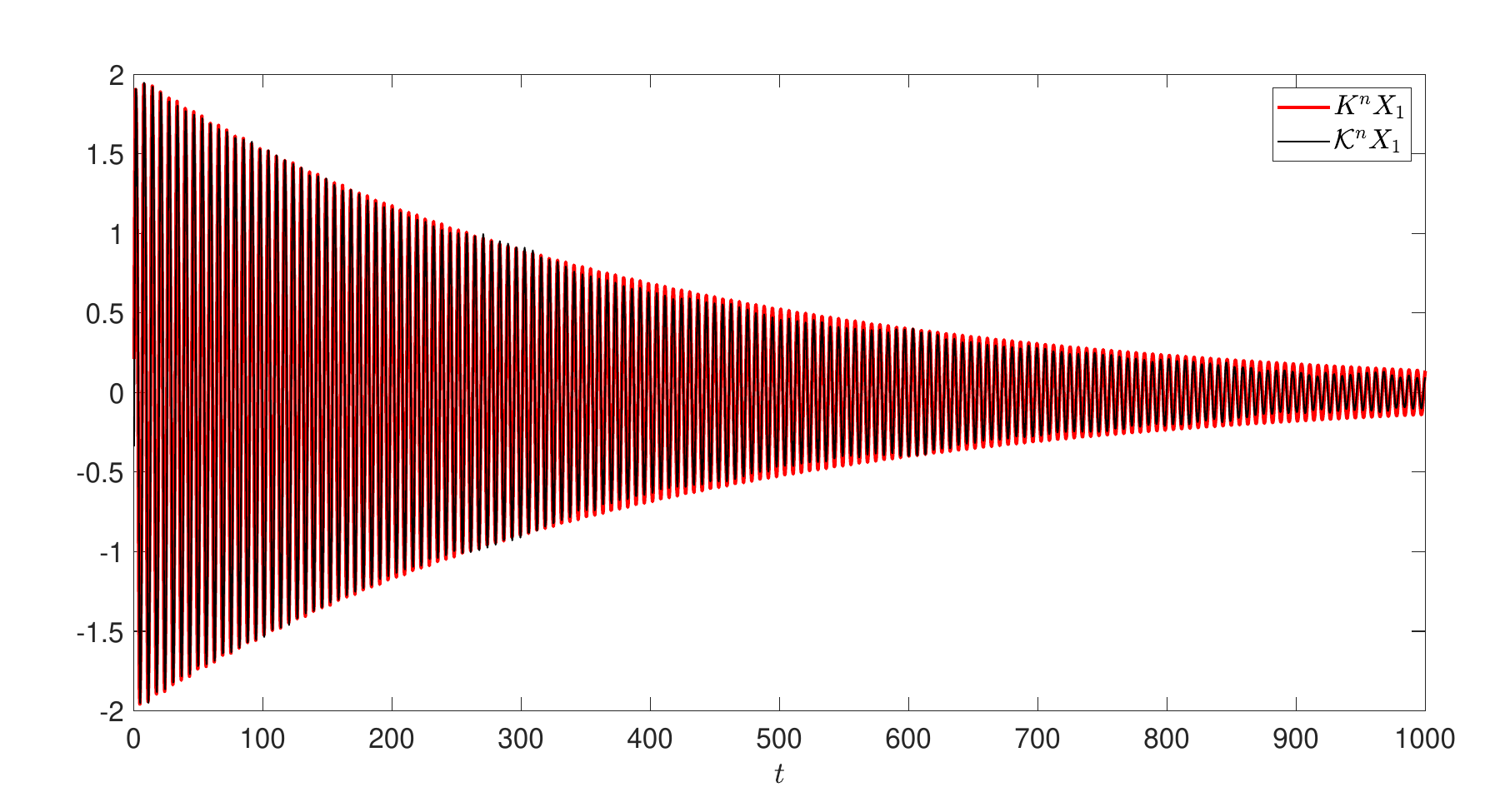}
\includegraphics[width=0.49\textwidth]{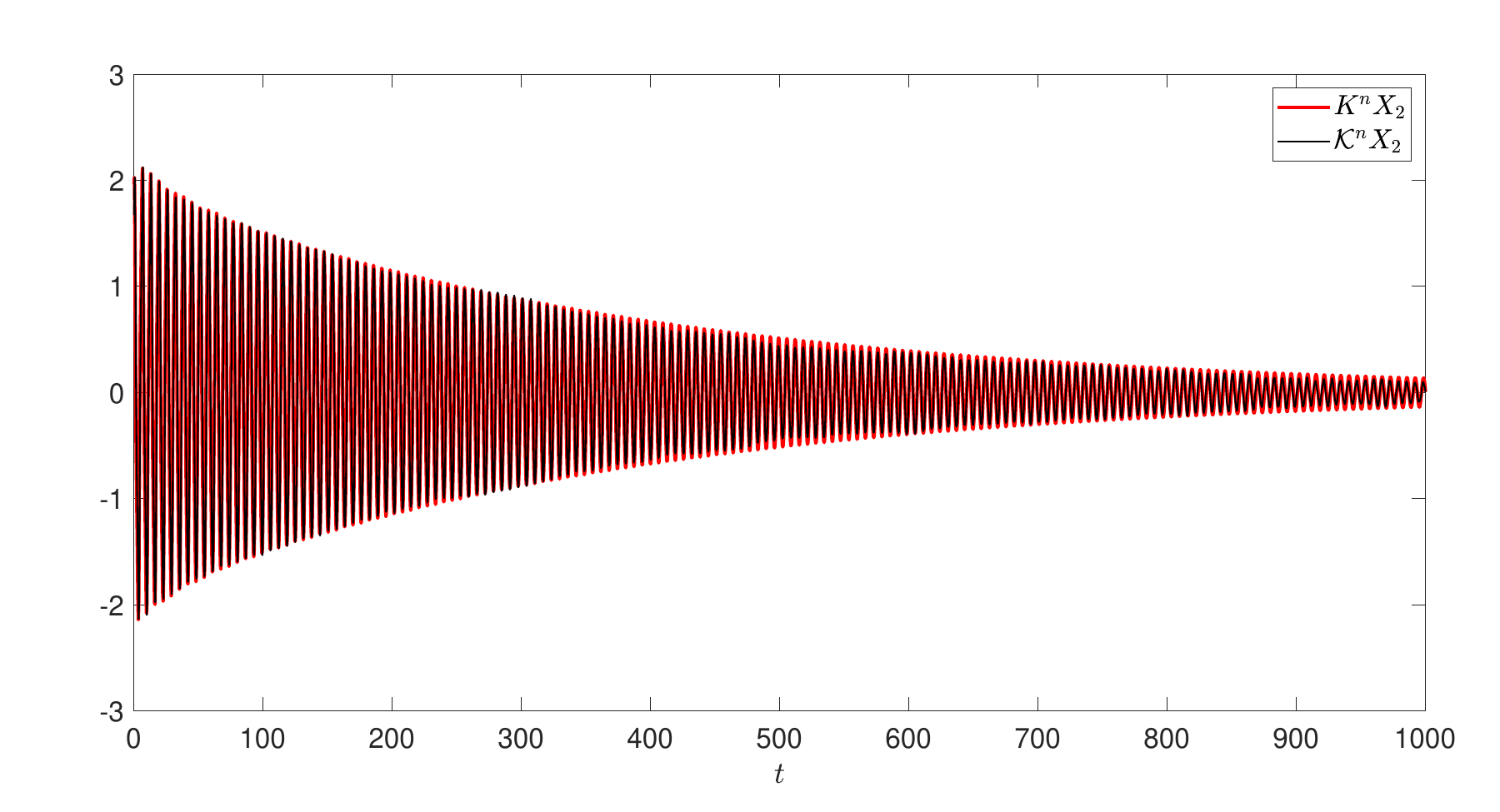}
\caption{Comparison of computed $K^nX_i$, where $K\in\mathbb{C}^{N\times N}$ is the EDMD matrix, and the true values of $\mathcal{K}^nX_i$.}
\label{fig:SDE_pred}
\end{figure*}

In this example, the norm of the Koopman operator $\|\mathcal{K}\|$ is approximately 1, and the subspace error $\delta_n(g)$ predominantly contributes to the bound established in Theorem \ref{thm:forecast_bound}. We analyze the two observables $X_1$ and $X_2$, each starting from a point randomly selected on the attractor. Figure \ref{fig:SDE_var} presents the calculated values of $\delta_n(X_1)$ and $\delta_n(X_2)$ as per \eqref{delta_comp1} and \eqref{delta_comp2}, along with the variance of the trajectory. Additionally, Figure \ref{fig:SDE_pred} compares the values computed using $K^nX_i$ with the actual values of $\mathcal{K}^nX_i$, obtained by integrating the generator $\mathcal{L}$. Together, these figures demonstrate the convergence of the mean trajectories towards the dominant subspace of $\mathcal{K}$.

\subsection{Neuronal population dynamics}

As a final example, we apply our approach to experimental neuroscience data. Recent technological advancements in this field now allow for the simultaneous monitoring of large neuronal populations in the brains of awake, behaving animals. This development has spurred significant interest in employing data-driven methods to derive physically meaningful insights from high-dimensional neural measurements \cite{paninski2018neural}.

To analyze complex neural data, researchers have employed a variety of analytical tools to uncover features like low-dimensional manifolds, latent population dynamics, within-trial variance, and trial-to-trial variability. However, existing methods often examine these features in isolation \cite{gao2016linear,churchland2010stimulus,pandarinath2018inferring,stringer2019spontaneous}. From a dynamical systems perspective, a unified model that captures these distinct aspects of neural data would be highly advantageous. In this context, the Koopman operator framework offers a compelling approach to analyzing high-dimensional neural observables \cite{marrouch2020data}. DMD has emerged as a prominent method for the spatiotemporal decomposition of diverse datasets \cite{brunton2016extracting,casorso2019dynamic}. Nevertheless, a limitation of DMD is its lack of explicit uncertainty quantification regarding the modes and forecasts it uncovers. This aspect is particularly vital in neural time series analysis, where it is challenging to identify physically meaningful spectral components \cite{donoghue2020parameterizing}.

Our framework offers a unified, data-driven solution to uncover validated latent dynamical modes and their associated variance in neural data. To demonstrate its efficacy, we applied it to high-dimensional neuronal recordings from the visual cortex of awake mice, as publicly shared by the Allen Brain Observatory \cite{siegle2021survey}, involving 400–800 neurons per mouse. Our focus was on the ``Drifting Gratings" task epoch, wherein mice were presented with gratings drifting in one of eight directions (0$^{\circ}$, 45$^{\circ}$, etc.), modulated sinusoidally at one of five temporal frequencies. We specifically analyzed responses to gratings modulated at 15 Hz across all eight directions, as these stimuli consistently elicited an identifiable eigenvalue in the neural data corresponding to the expected frequency. This analysis encompassed 120 trials per mouse (stimulus duration of 2s) for a total of 20 mice, as detailed in \cite{siegle2021survey}. We computed distinct stochastic Koopman operators for 15 different arousal levels, categorized by the average pupil diameter measured during the 500ms before each stimulus \cite{mcginley2015cortical}. For this analysis, DMD was employed to identify 100 dictionary functions.

Our data-driven approach was effective in identifying an isolated, population-level coherent mode at the stimulus frequency. As illustrated in Figure \ref{fig:mouse_pseudospec}, this is evidenced by a distinct eigenvalue, highlighted in green, which consistently appears as a clear local minimum in the variance pseudospectra contour plots across various arousal states. Without the variance pseudospectra, discerning which DMD eigenvalues are reliable and indicative of coherence can be challenging. We observed that individual neurons displayed a variety of waveforms, all linked to this single linear dynamic mode. Demonstrating the diversity of these responses, Figure \ref{fig:mouse_traj} showcases five randomly chosen sample trajectories from the KMD. These trajectories highlight the distinct spike counts and/or timings of different neurons, all parsimoniously represented by a single latent mode.

\begin{figure*}
\centering
\includegraphics[width=0.33\textwidth]{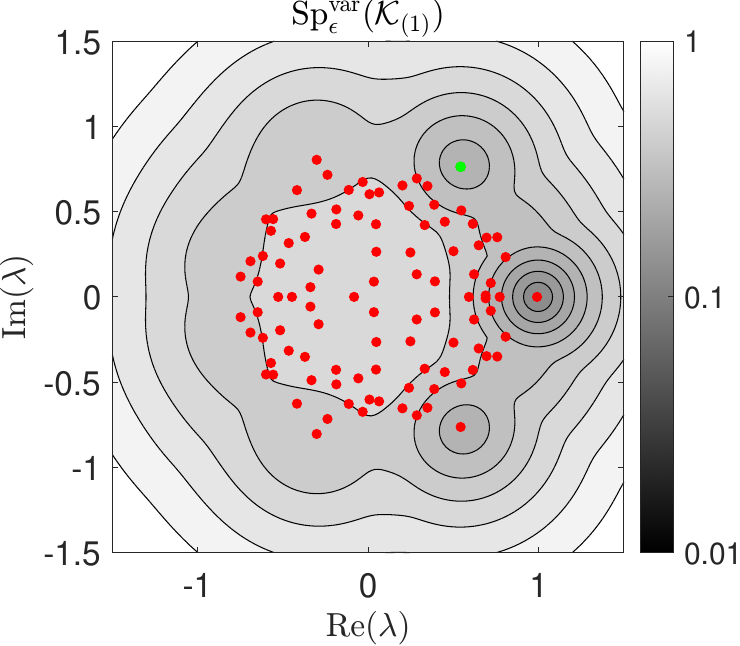}
\hfill
\includegraphics[width=0.33\textwidth]{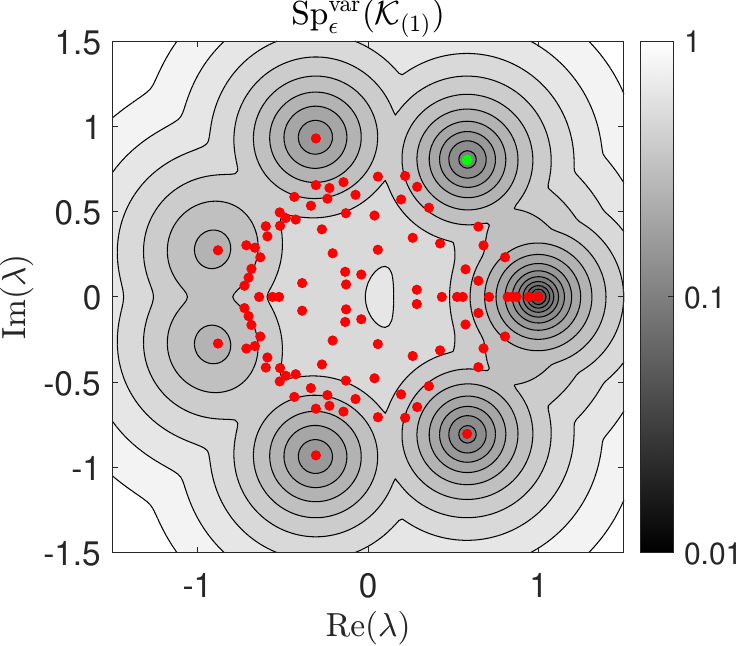}
\hfill
\includegraphics[width=0.33\textwidth]{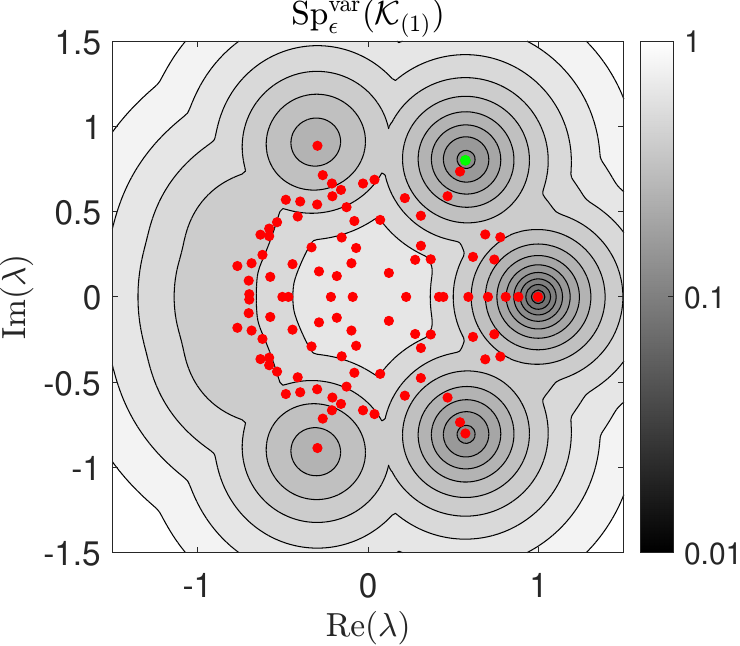}
\caption{Variance pseudospectra for a single mouse in the neuronal population dynamics example. Each case corresponds to a pupil diameter of $8\%$ (left), $28\%$ (middle), and $43\%$ (right). The identified mode is shown in green, and the red dots show the other DMD eigenvalues. The variance pseudospectra changes considerably as the arousal state changes, but the green eigenvalue shows little variability.}
\label{fig:mouse_pseudospec}
\end{figure*}

\begin{figure}
\centering
\includegraphics[width=0.4\textwidth]{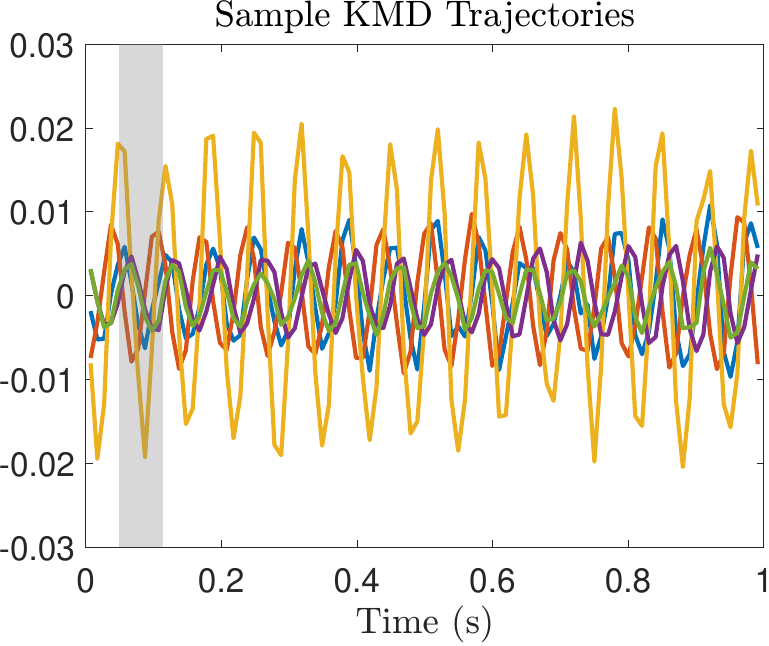}
\caption{Randomly selected sample trajectories from the Koopman mode corresponding to the eigenvalue shown in green in Figure \ref{fig:mouse_pseudospec}. The reference gray region in the left region shows the wavelength predicted by the eigenvalue.}
\label{fig:mouse_traj}
\end{figure}

Importantly, neuronal responses demonstrate significant trial-to-trial variability, a phenomenon of considerable physiological interest due to its close relationship with ongoing fluctuations in an animal's internal state. Dynamical systems approaches are adept at modeling this type of variability, which often stems from changes in the neural population's pre-stimulus state \cite{pandarinath2018inferring}. Furthermore, the extent of this variability is heavily influenced by internal states like arousal and attention, as detailed in \cite{mcginley2015waking}. Our stochastic modeling approach enables us to additionally estimate this second source of trial-to-trial variability in neuronal responses.

To validate the physiological significance of our variance estimates, we analyzed the variance linked to the Koopman operators computed across each of $15$ levels of pupil diameter, effectively using pupil diameter as a parameter for the Koopman operator in relation to arousal. Our hypothesis was that this analysis would reflect the well-known ``U-shape" pattern described by the Yerkes--Dodson law \cite{yerkes1908relation}, with variance minimized at intermediate arousal levels \cite{mcginley2015cortical}. Figure \ref{fig:mouse_pseudospec} indicates that the eigenvalue or expectation derived from \ref{fig:mouse_pseudospec} remains consistent across various arousal states. However, from Figure \ref{fig:mouse_var}, a notable modulation in variance residuals is observed in accordance with arousal levels, aligning with our predictions: the variance associated with the leading mode is specifically reduced at intermediate arousal levels. This pattern underscores the physiological relevance of the variance estimates yielded by our modeling approach. Consequently, our findings suggest that arousal systematically influences dynamical variance, providing both practical and physiological rationales for employing dynamical models that explicitly estimate variance. Overall, our data-driven framework offers a unified and formal representation of neural dynamics, parsimoniously capturing multiple physiologically significant features in the data.

\begin{figure}
\centering
\includegraphics[width=0.4\textwidth]{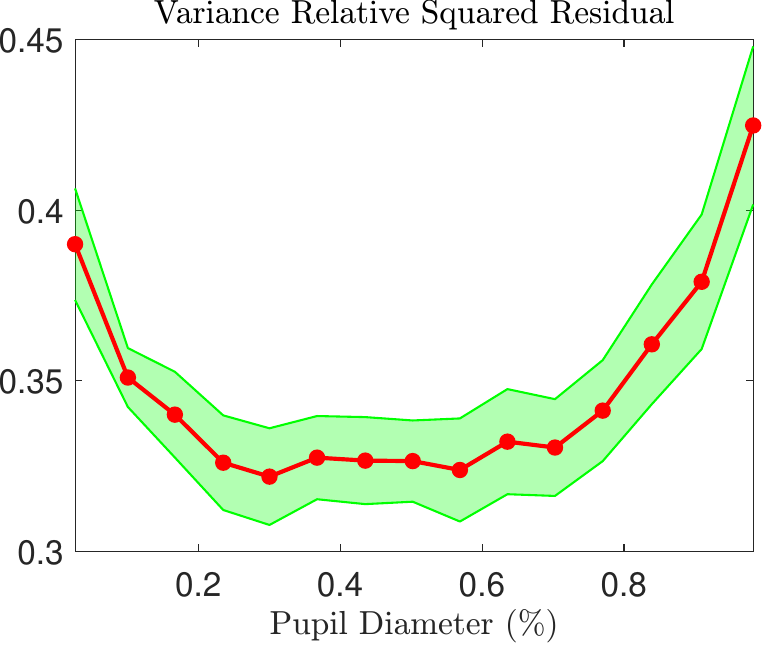}
\caption{The variance relative squared residual as a function of the arousal state. The red lines show the average across the mice, and the green error bounds correspond to the standard error of the mean. The ``U-shape" is characteristic of the so-called Yerkes--Dodson law, which we produce in a data-driven fashion from the dynamics.}
\label{fig:mouse_var}
\end{figure}

\section{Conclusion}

We have demonstrated the role of variance in the Koopman analysis of stochastic dynamical systems. To effectively study projection errors in data-driven approaches for these systems, it is crucial to move beyond expectations and study more than just the stochastic Koopman operator. Incorporating variance into the Koopman framework enhances our understanding of spectral properties and the related projection errors. By analyzing various types of residuals, we have developed data-driven algorithms capable of computing the spectral properties of infinite-dimensional stochastic Koopman operators. Furthermore, we introduced the concept of variance pseudospectra, a tool designed to assess statistical coherency. From a computational perspective, our work includes several convergence theorems pertinent to the spectral properties of these operators. In the realm of experimental neural recordings, our framework has proven effective in extracting and compactly representing multiple data features with known physiological significance.

There are several avenues of future work related to this paper. One such direction involves an analysis of the algorithms and theorems presented in Section \ref{sec:theory} in scenarios involving noisy snapshot data. Another avenue explores the trade-offs between computing the squared residual and variance terms, as outlined in \eqref{IMSE_decomp}, potentially reflecting variance-bias trade-offs in statistical analysis. Additionally, we aim to assess the robustness and generalizability of the proposed framework across further stochastic dynamical systems.

\begin{acknowledgements}
We thank the Allen Institute for the publicly available data and the referees for valuable comments that helped improve the clarity of the paper. MJC would like to thank the Cecil King Foundation and the London Mathematical Society for a Cecil King Travel Scholarship that funded visits to the University of Wisconsin-Madison, the University of Washington, and Cornell University. QL would like to thank Vice Chancellor for Research and Graduate Education, DMS-2308440 and ONR-N000142112140. RVR would like to thank the Shanahan Family Foundation for support. AT work was partially supported by the NSF DMS-1952757, DMS-2045646, a Simons Mathematical Fellowship, and ONR-N000142312729. 
\end{acknowledgements}

{\small
\noindent\textbf{Data Availability} All data generated or analyzed during this
study is available upon request.

\noindent{}A preprint of this work can be found at~\cite{colbrook2023beyond}.

\noindent\textbf{Conflict of interest} The authors declare that they have no conflict of interest.}

\bibliographystyle{spmpsci}      
\bibliography{bib_file2}   

\end{document}